\newcounter{Cnmb}
\newcounter{Nnmb}
\newcommand{\cK}{\mathcal K}
\newtheorem{thm}{Theorem}[section]
\newtheorem{lem}[thm]{Lemma}
\newtheorem{cor}[thm]{Corollary}
\newtheorem{rem}[thm]{Remark}
\newtheorem{prop}[thm]{Proposition}
\newcommand{\inte}{{\mathrm{int}}\,}
\newcommand{\relint}{{\mathrm{relint}}\,}
\newcommand{\conv}{{\mathrm{conv}}\,}
\newcommand{\diam}{{\mathrm{diam}}\,}
\newcommand{\var}{{\mathrm{var}}\,}
\newcommand{\cov}{{\mathrm{cov}}\,}
\newcommand{\vN}{{\mathbf{N}}}
\newcommand{\vx}{{\mathbf{x}}}
\newcommand{\vX}{{\mathbf{X}}}
\newcommand{\vz}{{\mathbf{z}}}
\newcommand{\ee}{\varepsilon}
\newcommand{\R}{{\mathbb R}}
\newcommand{\N}{{\mathbb N}}
\newcommand{\Z}{{\mathbb Z}}
\begin{document}
\hfill\today
\bigskip

\title{Phase retrieval for characteristic functions of convex bodies and reconstruction from covariograms}
\author[Gabriele Bianchi, Richard J.~Gardner, and Markus Kiderlen]
{Gabriele Bianchi, Richard J.~Gardner, and Markus Kiderlen}
\address{Dipartimento di Matematica, Universit\`a di Firenze, Viale Morgagni 67/A,
Firenze, Italy I-50134} \email{gabriele.bianchi@unifi.it}
\address{Department of Mathematics, Western Washington University,
Bellingham, WA 98225-9063} \email{Richard.Gardner@wwu.edu}
\address{Department of Mathematical Sciences, University of Aarhus,
Ny Munkegade, DK--8000 Aarhus C, Denmark} \email{kiderlen@imf.au.dk}
\thanks{Supported in part by U.S.~National Science Foundation grant
DMS-0603307.} \subjclass[2010]{Primary: 42--04, 42B10, 52--04, 52A20; secondary:
52B11, 62H35} \keywords{Algorithm, autocorrelation, convex body, convex polytope,
covariogram, geometric tomography, image analysis, least squares, phase
retrieval, quasicrystal, set covariance} \maketitle

\pagestyle{myheadings} \markboth{GABRIELE BIANCHI, RICHARD J. GARDNER, AND
MARKUS KIDERLEN}{PHASE RETRIEVAL FOR CHARACTERISTIC FUNCTIONS OF CONVEX BODIES}

\begin{abstract}
We propose strongly consistent algorithms for reconstructing the characteristic
function $1_K$ of an unknown convex body $K$ in $\R^n$ from possibly noisy
measurements of the modulus of its Fourier transform $\widehat{1_K}$.  This
represents a complete theoretical solution to the Phase Retrieval Problem for
characteristic functions of convex bodies.  The approach is via the closely
related problem of reconstructing $K$ from noisy measurements of its
covariogram, the function giving the volume of the intersection of $K$ with its
translates. In the many known situations in which the covariogram determines a
convex body, up to reflection in the origin and when the position of the body
is fixed, our algorithms use $O(k^n)$ noisy covariogram measurements to
construct a convex polytope $P_k$ that approximates $K$ or its reflection $-K$
in the origin. (By recent uniqueness results, this applies to all planar convex
bodies, all three-dimensional convex polytopes, and all symmetric and most (in
the sense of Baire category) arbitrary convex bodies in all dimensions.) Two
methods are provided, and both are shown to be strongly consistent, in the
sense that, almost surely, the minimum of the Hausdorff distance between $P_k$
and $\pm K$ tends to zero as $k$ tends to infinity.
\end{abstract}

\section{Introduction}\label{intro}

The {\em Phase Retrieval Problem} of Fourier analysis involves determining a
function $f$ on $\R^n$ from the modulus $|\widehat{f}\,|$ of its Fourier
transform $\widehat{f}$. This problem arises naturally and frequently in
various areas of science, such as X-ray crystallography, electron microscopy,
optics, astronomy, and remote sensing, in which only the magnitude of the
Fourier transform can be measured and the phase is lost. (Sometimes, as when
reconstructing an object from its far-field diffraction pattern, it is the
squared modulus $|\widehat{f}\,|^2$ that is directly measured.) In 1984,
Rosenblatt \cite{Ros84} wrote that the Phase Retrieval Problem ``arises in all
experimental uses of diffracted electromagnetic radiation for determining the
intrinsic detailed structure of a diffracting object."  Today, the word ``all" is perhaps too strong in view of recent advances in coherent diffraction imaging. In any case, the literature is vast; see the surveys \cite{Hur89},
\cite{KST}, \cite{LBL}, and \cite{Ros84}, as well as the articles \cite{BSV}
and \cite{Fie82} and the references given there.

Phase retrieval is fundamentally under-determined without additional
constraints, which usually take the form of an a priori assumption that $f$ has
a particular support or distribution of values.  An important example is when
$f=1_K$, the characteristic function of a convex body $K$ in $\R^n$.  In this
setting, phase retrieval is very closely related to a geometric problem
involving the {\em covariogram} of a convex body $K$ in $\R^n$.  This is the
function $g_K$ defined by $$g_K(x)=V_n\left(K\cap (K+x)\right),$$ for $x\in
\R^n$, where $V_n$ denotes $n$-dimensional Lebesgue measure and $K+x$ is the
translate of $K$ by the vector $x$.  It is also sometimes called the {\em set
covariance} and is equal to the {\em autocorrelation} of $1_K$, that is,
$$g_K=1_K\ast 1_{-K},$$
where $\ast$ denotes convolution and $-K$ is the reflection of $K$ in the
origin. Taking Fourier transforms, we obtain the relation
\begin{equation}\label{PRC}
\widehat{g_K}=\widehat{1_K}\,\widehat{1_{-K}}=\widehat{1_K}\,
\overline{\widehat{1_{K}}} =\big{|}\widehat{1_K}\big{|}^2.
\end{equation}
This connects the Phase Retrieval Problem, restricted to characteristic
functions of convex bodies, to the problem of determining a convex body from
its covariogram.  Both the definition of covariogram and this connection extend
to arbitrary measurable sets, but the reason for restricting to convex bodies
will become clear.

The covariogram was introduced by Matheron in his book \cite{M} on random sets.
He showed that for a fixed $u\in S^{n-1}$, the directional derivatives
$\partial g_K(tu)/\partial t$, for all $t>0$, of the covariogram of a convex
body $K$ in $\R^n$ yield the distribution of the lengths of all chords of $K$
parallel to $u$. This explains the utility of the covariogram in fields such as
stereology, geometric tomography, pattern recognition, image analysis, and
mathematical morphology, where information about an unknown object is to be
retrieved from chord length measurements; see, for example, \cite{CabB03},
\cite{Gar95a}, and \cite{Ser82}. The covariogram has also played an
increasingly important role in analytic convex geometry.  For example, it was
used by Rogers and Shephard in proving their famous difference body inequality
(see \cite[Theorem~7.3.1]{Sch93}), by Gardner and Zhang \cite{GZ} in the theory
of radial mean bodies, and by Tsolomitis \cite{T1} in his study of convolution
bodies, which via the work of Schmuckenschl\"{a}ger \cite{Schm92} and Werner
\cite{Wer99} allows a covariogram-based definition of the fundamental notion of
affine surface area.

Here we effectively solve the following three problems.  In each, $K$ is a
convex body in $\R^n$.

\noindent{\bf Problem 1 (Reconstruction from covariograms)}.  Construct an
approximation to $K$ from a finite number of noisy (i.e., taken with error)
measurements of $g_K$.

\noindent{\bf Problem 2 (Phase retrieval for characteristic functions of convex
bodies: squared modulus)}.  Construct an approximation to $K$ (or,
equivalently, to $1_K$) from a finite number of noisy measurements of
$|\widehat{1_K}|^2$.

\noindent{\bf Problem 3 (Phase retrieval for characteristic functions of convex
bodies: modulus)}. Construct an approximation to $K$ from a finite number of
noisy measurements of $|\widehat{1_K}|$.

In order to discuss our results, we must first address the corresponding
uniqueness problems.  In view of (\ref{PRC}), these are equivalent, so we shall
focus on the covariogram. It is easy to see that $g_K$ is invariant under
translations of $K$ and reflection of $K$ in the origin.  Matheron \cite{Mat86}
asked the following question, known as the {\em Covariogram Problem},
to which he conjectured an affirmative answer when $n=2$.

{\em Is a convex body in $\R^n$ determined, among all convex bodies and up to
translation and reflection in the origin, by its covariogram?}

The focus on covariograms of convex bodies is natural.  One reason is that
Mallows and Clark \cite{MalC70} constructed non-congruent convex polygons whose
overall chord length distributions (allowing the directions of the chords to
vary as well) are equal, thereby answering a related question of Blaschke.
Thus the information provided by the covariogram cannot be weakened too much.
Moreover, there exist non-congruent non-convex polygons, even (see
\cite[p.~394]{GGZ05}) horizontally- and vertically-convex polyominoes, with the
same covariogram, indicating that the convexity assumption also cannot be
significantly weakened.

Interest in the Covariogram Problem extends far beyond geometry. For example,
Adler and Pyke \cite{AdlP91} ask whether the distribution of the difference
$X-Y$ of independent random variables $X$ and $Y$, uniformly distributed over a
convex body $K$, determines $K$ up to translations and reflection in the
origin.  Up to a constant, the convolution $1_K\ast 1_{-K}=g_K$ is just the
probability density of $X-Y$, so the question is equivalent to the Covariogram
Problem.  In \cite{AdlP97}, the Covariogram Problem also appears in deciding
the equivalence of measures induced by Brownian processes for different base
sets.

A detailed historical account of the covariogram problem may be found in \cite{AveB}.  The current status is as follows, in which ``determined" always means determined by the covariogram among all convex bodies, up to translation and reflection in the origin.  Averkov and Bianchi \cite{AveB} showed that planar convex bodies are determined, thereby confirming Matheron's conjecture.  Bianchi \cite{Bia} proved, by a long and intricate argument, that three-dimensional convex polyhedra are determined. It is easy to see that centrally symmetric convex bodies are determined. (In the symmetric case, convexity is not essential; see \cite[Proposition~4.4]{GGZ05} for this result, due to Cabo and Jensen.) Goodey, Schneider, and Weil \cite{GSW} proved that most (in the sense of Baire category) convex bodies in $\R^n$ are determined. Nevertheless, the Covariogram Problem in general has a negative answer, as Bianchi \cite{Bia05} demonstrated by constructing convex polytopes in $\R^n$, $n\ge 4$, that are not determined. It is still unknown whether convex bodies in $\R^3$ are determined.

None of the above uniqueness proofs provide a method for actually
reconstructing a convex body from its covariogram. We are aware of only two
papers dealing with the reconstruction problem: Schmitt \cite{Schm93} gives an
explicit reconstruction procedure for a convex polygon when no pair of its
edges are parallel, an assumption removed in an algorithm due to Benassi and
D'Ercole \cite{BenD07}.  In both these papers, all the exact values of the
covariogram are supposed to be available.

In contrast, our first set of algorithms take as input only a finite number of
values of the covariogram of an unknown convex body $K_0$.  Moreover, these
measurements are corrupted by errors, modeled by zero mean random variables with uniformly bounded $p$th moments, where $p$ is at most six and usually four.  It is assumed that $K_0$ is determined by its
covariogram, has its centroid at the origin, and is contained in a known
bounded region of $\R^n$, which for convenience we take to be the unit cube
$C_0^n=[-1/2,1/2]^n$. We provide two different methods for reconstructing, for
each suitable $k\in\N$, a convex polytope $P_k$ that approximates $K_0$ or its
reflection $-K_0$.  Each method involves two algorithms, an initial algorithm
that produces suitable outer unit normals to the facets of $P_k$, and a common
main algorithm that goes on to actually construct $P_k$.

In the first method, the covariogram of $K_0$ is measured, {\em multiple times}, at
the origin and at vectors $(1/k)u_i$, $i=1,\dots,k$, where the $u_i$'s are
mutually nonparallel unit vectors that span $\R^n$.  From these measurements,
the initial Algorithm~NoisyCovBlaschke constructs an $o$-symmetric convex
polytope $Q_k$ that approximates $\nabla K_0$, the so-called Blaschke body of
$K_0$. (See Section~\ref{prelim} for definitions and notation.) The crucial
property of $\nabla K_0$ is that when $K_0$ is a convex polytope, each of its
facets is parallel to some facet of $\nabla K_0$. It follows that the outer
unit normals to the facets of $P_k$ can be taken to be among those of $Q_k$.
Algorithm~NoisyCovBlaschke utilizes the known fact that $-\partial
g_{K_0}(tu)/\partial t$, evaluated at $t=0$, equals the brightness function
value $b_{K_0}(u)$, that is, the $(n-1)$-dimensional volume of the orthogonal
projection of $K_0$ in the direction $u$.  This connection allows most of the
work to be done by a very efficient algorithm, Algorithm~NoisyBrightLSQ, designed earlier by Gardner and Milanfar (see \cite{GKM06}) for reconstructing a $o$-symmetric convex body from finitely many noisy measurements of its brightness function.

The second method achieves the same goal with a quite different approach.  This
time the covariogram of $K_0$ is measured once at each point in a cubic array
in $2C_0^n=[-1,1]^n$ of side length $1/k$.  From these measurements, the
initial Algorithm~NoisyCovDiff($\varphi$) constructs an $o$-symmetric convex
polytope $Q_k$ that approximates $DK_0=K_0+(-K_0)$, the difference body of
$K_0$.  The set $DK_0$ has precisely the same property as $\nabla K_0$, that
when $K_0$ is a convex polytope, each of its facets is parallel to some facet
of $DK_0$. Furthermore, $DK_0$ is just the support of $g_{K_0}$.  The
known property that $g_{K_0}^{1/n}$ is concave (a consequence of the
Brunn-Minkowski inequality \cite[Section~11]{Gar02}) can therefore be combined
with techniques from multiple regression.  Algorithm~NoisyCovDiff($\varphi$)
employs a Gasser-M\"{u}ller type kernel estimator for $g_{K_0}$, with suitable
kernel function $\varphi$, bandwidth, and threshold parameter.

The output $Q_k$ of either initial algorithm forms part of the input to the
main common Algorithm~NoisyCovLSQ.  The covariogram of $K_0$ is now measured
{\em again}, once at each point in a cubic array in $2C_0^n=[-1,1]^n$ of side length
$1/k$.  Using these measurements, Algorithm~NoisyCovLSQ finds a convex polytope
$P_k$, each of whose facets is parallel to some facet of $Q_k$, whose
covariogram fits best the measurements in the least squares sense.

Much effort is spent in proving that these algorithms are strongly consistent.
Whenever $K_0$ is determined among convex bodies, up to translation and reflection in the origin, by its covariogram, we show that, almost surely,
$$\min\{\delta(K_0,P_k), \delta(-K_0,P_k)\}\rightarrow 0$$
as $k\rightarrow \infty$, where $\delta$ denotes Hausdorff distance.  (If
$K_0$ is not so determined, a rare situation in view of the uniqueness results
discussed above, the algorithms still construct a sequence $(P_k)$ whose
accumulation points exist and have the same covariogram as $K_0$.) From a
theoretical point of view, this completely solves Problem~1. Naturally, the
consistency proof leans heavily on results and techniques from analytic convex
geometry, as well as a suitable version of the Strong Law of Large
Numbers.  Some effort has been made to make the proof fairly self-contained, but some arguments from the proof from \cite{GKM06} that Algorithm~NoisyBrightLSQ is strongly consistent are used in proving that Algorithm~NoisyCovBlaschke is strongly consistent. One such argument rests on the Bourgain-Campi-Lindenstrauss stability result for projection bodies.

With algorithms for Problem~1 in hand, we move to Problem~2, assuming that
$K_0$ is an unknown convex body satisfying the same conditions as before. The
basic idea is simple enough: Use (\ref{PRC}) and the measurements of
$|\widehat{1_{K_0}}|^2$ at points in a suitable cubic array to approximate
$g_{K_0}$ via its Fourier series, and feed the resulting values into the
algorithms for Problem~1.  However, two major technical obstacles arise.  The
new estimates of $g_{K_0}$ are corrupted by noise that now involves dependent
random variables, and a new deterministic error appears as well.  A substitute
for the Strong Law of Large Numbers must be proved, and the deterministic error
controlled using Fourier analysis and the fortunate fact that $g_{K_0}$ is
Lipschitz.  In the end the basic idea works, assuming that for suitable
$1/2<\gamma<1$, measurements of $|\widehat{1_{K_0}}|^2$ are taken at the points
in $(1/k^{\gamma})\Z^n$ contained in the cubic window
$[-k^{1-\gamma},k^{1-\gamma}]^n$, whose size increases with $k$ at a rate
depending on the parameter $\gamma$. The three resulting algorithms,
Algorithm~NoisyMod$^2$LSQ, Algorithm~NoisyMod$^2$Blaschke, and
Algorithm~NoisyMod$^2$Diff($\varphi$), are stated in detail and, with suitable
restrictions on $\gamma$, proved to be strongly consistent under the same
hypotheses as for Problem~1.

Our final three algorithms, Algorithm~NoisyModLSQ, Algorithm~NoisyModBlaschke,
and Algorithm~NoisyModDiff($\varphi$) cater for Problem~3.  Again there is a
basic simple idea, namely, to take two independent measurements at each of the
points in the same cubic array as in the previous paragraph, multiply the two,
and feed the resulting values into the algorithms for Problem~2.  No serious
extra technical difficulties arise, and we are able to prove that the three new
algorithms are strongly consistent under the same hypotheses as for Problem~2.
This provides a complete theoretical solution to the Phase Retrieval Problem
for characteristic functions of convex bodies.

To summarize:

\noindent $\bullet$ For Problem~1, {\em first} use {\em either} Algorithm~NoisyCovBlaschke {\em or}
Algorithm~NoisyCovDiff($\varphi$) and {\em then} use Algorithm~NoisyCovLSQ.

\noindent $\bullet$ For Problem~2, {\em first} use {\em either} Algorithm~NoisyMod$^2$Blaschke {\em or}
Algorithm~NoisyMod$^2$Diff($\varphi$) and {\em then} use Algorithm~NoisyMod$^2$LSQ.

\noindent $\bullet$ For Problem~3, {\em first} use {\em either} Algorithm~NoisyModBlaschke {\em or}
Algorithm~NoisyModDiff($\varphi$) and {\em then} use Algorithm~NoisyModLSQ.

These results can also be viewed as a contribution to the literature on the associated uniqueness problems. They show that if a convex body is determined, up to translation and reflection in the origin, by its covariogram, then it is also so determined by its values at certain countable sets of points, even, almost surely, when these values are contaminated with noise.  Similarly, the characteristic function of such a convex body is also determined by certain countable sets of noisy values of the modulus of its Fourier transform.

Our noise model is sufficiently general to apply to all the main cases of practical interest: zero mean Gaussian noise, Poisson noise (unbiased measurements following a Poisson distribution, sometimes called shot noise), or Poisson noise plus zero mean Gaussian noise.  However, the main text of this paper deals solely with theory.  With the exception of Corollary~\ref{cordiffe} and Remark~\ref{cordifferem}, where the method of proof leads naturally to rates of convergence for Algorithm~NoisyCovDiff($\varphi$) and hence for the two related algorithms for phase retrieval, the focus is entirely on strong consistency.  Further remarks about convergence rates, sampling designs, and implementation issues have been relegated to the Appendix. Much remains to be done.  We believe, however, that our algorithms will find applications.  For example, Baake and Grimm \cite{BaaG07} explain how the problem of finding the atomic structure of a quasicrystal from its X-ray diffraction image involves recovering a subset of $\R^n$ called a window from its covariogram, and note that this window is in many cases a convex body.

We are grateful to Jim Fienup, David Mason, and Sara van de Geer for helpful correspondence and to referees for some insightful suggestions that led to significant improvements.

\vspace{-.5in}

\section{Guide to the paper}\label{guide}

\begin{tabular}{p{0in}  p{6in}}
\hspace{-.25in}\S\ref{prelim}.& {\em Definitions, notation, and preliminary results}.\\
& We recommend that the reader skip this section and refer back to it when necessary.\\
\hspace{-.25in}\S\ref{convergence}.& {\em The main algorithm for reconstruction from covariograms}.\\
& This presents the main ({\em second} stage) Algorithm~NoisyCovLSQ for Problem~1 and its strong consistency, established in Theorem~\ref{maincov}.\\
\hspace{-.25in}\S\ref{Algorithm}.&{\em Approximating the Blaschke body via the covariogram}.\\
& The first of the two first-stage algorithms for Problem~1, Algorithm~NoisyCovBlaschke, is stated with proof of strong consistency in Theorem~\ref{covB}. The latter requires the assumption that the vectors $u_i$, $i=1,\dots,k$, are part of an infinite sequence $(u_i)$ that is in a sense evenly spread out in $S^{n-1}$, but this is a weak restriction.\\
\hspace{-.25in}\S\ref{diffe}.& {\em Approximating the difference body via the covariogram}.\\
& In this section, the second of the two first-stage algorithms for Problem~1, Algorithm~NoisyCovDiff($\varphi$), is set out and proved to be strongly consistent in Theorem~\ref{maindiffe}.\\
\hspace{-.25in}\S\ref{PRI}.& {\em Phase retrieval: Framework and technical lemmas}.\\
& Necessary material from Fourier analysis is gathered, and the scene is set for results on phase retrieval.  This does not depend on the previous three sections.\\
\hspace{-.25in}\S\ref{PRII}.& {\em Phase retrieval from the squared modulus}.\\
& The algorithms for Problem~2, Algorithm~NoisyMod$^2$LSQ, Algorithm~NoisyMod$^2$Blaschke, and Algorithm~NoisyMod$^2$Diff($\varphi$) are presented and strong consistency theorems for them are proved.\\
\hspace{-.25in}\S\ref{PRIII}.& {\em Phase retrieval from the modulus}.\\
& The corresponding algorithms for Problem~3, Algorithm~NoisyModLSQ, Algorithm~NoisyModBlaschke, and Algorithm~NoisyModDiff($\varphi$), are presented and shown to be strongly consistent.\\
\hspace{-.3in}\S\ref{Appendix}.& {\em Appendix}.\\
& Rates of convergence and implementation issues are discussed.\\
\end{tabular}

\section{Definitions, notation, and preliminary results}\label{prelim}

\subsection{Basic definitions and notation}

As usual, $S^{n-1}$ denotes the unit sphere, $B^n$ the unit ball, $o$ the
origin, and $|\cdot|$ the norm in Euclidean $n$-space $\R^n$.  It is assumed throughout that $n\ge 2$. We shall also
write $C_0^n=[-1/2,1/2]^n$ throughout. The standard orthonormal basis for
$\R^n$ will be denoted by $\{e_1,\dots,e_n\}$. A {\it direction} is a unit
vector, that is, an element of $S^{n-1}$. If $u$ is a direction, then $u^{\perp
}$ is the $(n-1)$-dimensional subspace orthogonal to $u$ and $l_u$ is the line
through the origin parallel to $u$.  If $x,y\in\R^n$, then $x\cdot y$ is the
inner product of $x$ and $y$, and $[x,y]$ is the line segment with endpoints
$x$ and $y$.

We denote by $\partial A$,  $\inte A$,  $\diam A$, and $1_A$ the {\it boundary}, {\it
interior}, \emph{diameter}, and {\em characteristic function} of a set $A$, respectively. The notation for the usual (orthogonal) {\it projection} of $A$ on a subspace $S$ is $A|S$. A set is {\it $o$-symmetric} if it is centrally symmetric, with center at the origin.

If $X$ is a metric space and $\ee>0$, a finite set $\{x_1,\dots,x_m\}$ is
called an {\em $\ee$-net} in $X$ if for every point $x$ in $X$, there is an
$i\in\{1,\dots,m\}$ such that $x$ is within a distance $\ee$ of $x_i$.

We write $V_k$ for $k$-dimensional Lebesgue measure in $\R^n$, where
$k=1,\dots, n$, and where we identify $V_k$ with $k$-dimensional Hausdorff
measure. If $K$ is a $k$-dimensional convex subset of $\R^n$, then $V(K)$ is
its {\em volume} $V_k(K)$. Define $\kappa_n=V(B^n)$. The notation $dz$ will
always mean $dV_k(z)$ for the appropriate  $k=1,\dots, n$.

If $E$ and $F$ are sets in $\R^n$, then
$$E+F=\{x+y: x\in E, y\in F\}$$
denotes their {\em Minkowski sum} and
\begin{equation}\label{MD}
E\ominus F=\{x\in \R^n: F+x\subset E\}
\end{equation}
their {\em Minkowski difference}.

We adopt a standard definition of the Fourier transform $\widehat{f}$ of a
function $f$ on $\R^n$, namely
$$\widehat{f}(x)=\int_{\R^n}f(y)e^{-ix\cdot y}\,dy.$$

If $f$ and $g$ are real-valued functions on $\N$, then, as usual, $f=O(g)$
means that there is a constant $c$ such that $f(k)\le cg(k)$ for sufficiently
large $k$.  The notation $f\sim g$ will mean that $f=O(g)$ and $g=O(f)$.

\subsection{Convex geometry}

Let ${\mathcal K}^n$ be the class of compact convex sets in $\R^n$, and let ${\mathcal K}^n(A)$ be the subclass of members of ${\mathcal K}^n$ contained in the subset $A$ of $\R^n$. A {\em convex body} in $\R^n$ is a compact convex set with nonempty interior. The notation ${\mathcal K}^n(r,R)$ will be used for the class of convex bodies
containing $rB^n$ and contained in $RB^n$, where $0<r<R$. The treatise of
Schneider \cite{Sch93} is an excellent general reference for convex geometry.

Figures illustrating many of the following definitions can be found in \cite{Gar95a}.

If $K \in {\mathcal K}^n$, then
$$K^*=\{x\in\R^n:x\cdot y\le 1~{\text{for all}}~y\in K\}$$
is the {\em polar set} of $K$.  The function
$$h_K(x)=\max\{x\cdot y: y\in K\},$$
for $x\in\R^n$, is the {\it support function} of $K$ and
$$b_K(u)=V(K|u^{\perp}),$$
for $u\in S^{n-1}$, its {\it brightness function}.  Any $K \in {\mathcal K}^n$
is uniquely determined by its support function. We can regard $h_K$ as a
function on $S^{n-1}$, since $h_K(x)=|x|h_K(x/|x|)$ for $x\neq o$. The {\em
Hausdorff distance} $\delta(K,L)$ between two sets $K,L\in {\mathcal K}^n$ can
then be conveniently defined by
$$\delta(K,L)=\|h_K-h_L\|_{\infty},$$
where $\|\cdot\|_{\infty}$ denotes the supremum norm on $S^{n-1}$.  Equivalently, one can define
$$\delta(K,L)=\min\{\ee\ge 0: K\subset L+\ee B^n,~ L\subset K+\ee B^n\}.$$

The {\em surface area measure} $S(K,\cdot)$ of a convex body $K$ is defined for
Borel subsets $E$ of $S^{n-1}$ by
$$
S(K,E)=V_{n-1}\left(g^{-1}(K,E)\right),
$$
where $g^{-1}(K,E)$ is
the set of points in $\partial K$ at which there is an outer unit normal vector
in $E$. Let $S(K)=S(K,S^{n-1})$. Then $S(K)$ is the {\em surface area} of $K$. The {\em
Blaschke body} $\nabla K$ of a convex body $K$ is the unique $o$-symmetric
convex body satisfying
\begin{equation}\label{BB}
S(\nabla K,\cdot)=\frac12 S(K,\cdot)+\frac12 S(-K,\cdot).
\end{equation}
The {\it projection body} of $K\in \cK^n$ is the $o$-symmetric set $\Pi K\in
\cK^n$ defined by \begin{equation}\label{projbod} h_{\Pi K}=b_K.
\end{equation}
Cauchy's projection formula states that for any $u\in S^{n-1}$,
\begin{equation}\label{Cauchy}
h_{\Pi K}(u)=b_K(u)=\frac12\int_{S^{n-1}}|u\cdot v|\,dS(K,v),
\end{equation}
and Cauchy's surface area formula is
\begin{equation}\label{Cauchynew}
S(K)=\frac{1}{\kappa_{n-1}} \int_{S^{n-1}} b_K(u)du;
\end{equation}
see \cite[(A.45) and (A.49), p.~408]{Gar95a}. By (\ref{BB}) and (\ref{Cauchy}),
we have
\begin{equation}\label{newBB}
b_{\nabla K}=b_K,
\end{equation}
and it can be shown (see \cite[p.~116]{Gar95a}) that $\nabla K$ is the unique
$o$-symmetric convex body with this property.

The {\em difference body} of $K$ is the $o$-symmetric convex body
$DK=K+(-K)$.

\subsection{The covariogram}

The function
$$g_K(x) = V(K\cap (K+x)),$$
for $x\in\R^n$, is called the {\it covariogram} of $K$. Note that
$g_K(o)=V(K)$, and that we have $g_K(x)=0$  if and only if $x\notin \inte DK$,
so the support of $g_K$ is $DK$. Also, $g_K^{1/n}$ is concave on its support;
see, for example, \cite[Lemma~3.2]{GZ}.

Let $K$ be a convex body in $\R^n$ and let $u\in S^{n-1}$. The {\it (parallel)
X-ray of $K$} in the direction $u$ is the function $X_uK$ defined by
$$X_uK(x)=\int_{l_u+x}1_K(y)dy,$$
for $x\in u^{\perp}$. Now define
\begin{equation}\label{ek}E_K(t,u) =\{y\in u^\perp
:X_uK(y)\geq t\}
\end{equation} and
\begin{equation}\label{ak}
a_K(t,u)=V\bigl(E_K(t,u)\bigr),
\end{equation}
for $t\ge 0$ and $u\in S^{n-1}$.  Note that if $u\in S^{n-1}$, then
$E_K(0,u)=K|u^{\perp}$ and $a_K(0,u)=b_K(u)$.

Let $x=tu$, where $t\ge 0$ and $u\in S^{n-1}$, and define $g_K(t,u)=g_K(tu)$.
The simple relationship
\begin{align}\label{simple}
g_K(t,u)=\int_t^\infty a_K(s,u)\,ds
\end{align}
was noticed by Matheron \cite[p.~86]{M} in the form
$$
\frac{\partial g_K(t,u)}{\partial t}=-a_K(t,u),
$$
which also yields
$$\left.\frac{\partial g_K(t,u)}{\partial t}\right|_{t=0}=-b_K(u).$$
(Note that the partial derivative here is one-sided; $g_K$ is not differentiable at the origin.)

\begin{lem}\label{unif1}
Let $r>0$ and let $K$ be a convex body with $rB^n\subset K$. If $0<t\le 2r$,
then
\begin{equation}\label{estimDiffq}
\left(1-\frac{t}{2r}\right)^{n-1} b_K\left(u\right)\le \frac{g_K(o)-g_K(tu)}{t}
\le b_K\left(u\right),
\end{equation}
for all $u\in S^{n-1}$.
\end{lem}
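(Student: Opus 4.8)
The plan is to rewrite the central difference quotient in terms of the function $a_K(\cdot,u)$ from (\ref{ak}) and then bound that function above and below. Integrating the identity (\ref{simple}) and using $g_K(o)=g_K(0,u)$ gives
$$g_K(o)-g_K(tu)=\int_0^t a_K(s,u)\,ds,$$
so $(g_K(o)-g_K(tu))/t$ is the mean value of $a_K(\cdot,u)$ over $[0,t]$. The superlevel sets $E_K(s,u)$ in (\ref{ek}) decrease as $s$ increases, so $s\mapsto a_K(s,u)$ is nonincreasing, with $a_K(0,u)=b_K(u)$. Hence
$$a_K(t,u)\le\frac1t\int_0^t a_K(s,u)\,ds\le a_K(0,u)=b_K(u),$$
which already yields the right-hand inequality of (\ref{estimDiffq}). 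The left-hand inequality will follow once I establish
$$a_K(t,u)\ge\left(1-\frac{t}{2r}\right)^{n-1}b_K(u)\qquad(0<t\le 2r).$$

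To prove this last inequality I would exhibit an explicit large subset of $E_K(t,u)$, namely a dilate of the projection $K|u^\perp$. Fix $y\in K|u^\perp$ and choose $z\in K$ with $z|u^\perp=y$. Since $rB^n\subset K$ and $K$ is convex, the cone $\conv(\{z\}\cup rB^n)$ lies in $K$; in particular, for each $\mu\in[0,1]$ the ball $\mu z+(1-\mu)rB^n$, of radius $(1-\mu)r$ and center $\mu z$, is contained in $K$. Its center $\mu z$ lies on the line $l_u+\mu y$, so the intersection of this ball with that line is a segment of length $2(1-\mu)r$, whence $X_uK(\mu y)\ge 2(1-\mu)r$. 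Choosing $\mu=1-t/(2r)\in[0,1)$ gives $X_uK(\mu y)\ge t$, i.e.\ $\mu y\in E_K(t,u)$. As $y$ ranges over $K|u^\perp$, the point $\mu y$ ranges over $(1-t/(2r))(K|u^\perp)$, so this dilate is contained in $E_K(t,u)$; taking $(n-1)$-dimensional volume and using $V_{n-1}(K|u^\perp)=b_K(u)$ gives the stated bound. Combining, $(g_K(o)-g_K(tu))/t\ge a_K(t,u)\ge(1-t/(2r))^{n-1}b_K(u)$.

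I expect the cone inclusion in the second step to be the only point carrying any content: it says that comparison with the inscribed ball $rB^n$ through an ``ice-cream cone'' controls how fast the X-ray, and hence its superlevel sets, can shrink as one moves away from the center. Everything else is routine: integrating (\ref{simple}), the monotonicity of $a_K(\cdot,u)$, and sandwiching its mean between $a_K(t,u)$ and $b_K(u)$. Minor items to check are the degenerate case $t=2r$ (then $\mu=0$ and the claim reduces to $X_uK(o)\ge 2r$, which holds because $rB^n\subset K$), the harmless choice of $z$ (any point of $K$ with $z|u^\perp=y$ works, and boundary effects do not affect the volume), and convergence of the integral in (\ref{simple}), valid since $a_K(s,u)=0$ for $s>\diam K$.
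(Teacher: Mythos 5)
Your proof is correct and follows essentially the same route as the paper's: integrate \eqref{simple} to express the difference quotient as the mean of $a_K(\cdot,u)$, use monotonicity to sandwich it between $a_K(t,u)$ and $b_K(u)$, and establish the inclusion $\left(1-\tfrac{t}{2r}\right)(K|u^\perp)\subset E_K(t,u)$. The only difference is that you justify this inclusion directly via the cone over the inscribed ball, thereby spelling out the step the paper dismisses as ``elementary geometry'' using the auxiliary body $\conv\left((K|u^\perp)\cup[-ru,ru]\right)$.
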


\begin{proof}
Let $u\in S^{n-1}$.  By \eqref{simple}, we have
$$g_K(o)-g_K(tu)=\int_0^ta_K(s,u)\,ds.$$
From this and the fact that $a_K(\cdot,u)$ is decreasing, we obtain
\begin{equation}\label{99}
a_K(t,u)\le\frac{g_K(o)-g_K(tu)}{t}\le a_K(0,u)=b_K(u).
\end{equation}
The set
$$M=\conv\left((K|u^\perp)\cup [-ru,ru]\right)$$
is generally not a subset of $K$, but elementary geometry using
$[-ru,ru]\subset K$ and (\ref{ek}) gives
$$\left(1-\frac{t}{2r}\right)\left(K|u^\perp\right)=E_M(t,u)\subset E_K(t,u).$$
Taking the $(n-1)$-dimensional volumes of these sets and using (\ref{ak})
yields
$$\left(1-\frac{t}{2r}\right)^{n-1}b_K(u) \le a_K(t,u).$$
The lemma follows from the previous inequality and \eqref{99}.
\end{proof}

An inequality similar to \eqref{estimDiffq} was derived in \cite[Theorem
1]{kide:jens03} for $n=2$.

Matheron \cite[p.~2]{Mat86} showed that the covariogram of a convex body is a
Lipschitz function. For the convenience of the reader, we provide a
proof of this fact based on \cite{Gal10}, which yields the  optimal
Lipschitz constant.

\begin{prop}\label{Matprop}
If $K$ is a convex body in $\R^n$ and $x,y\in \R^n$, then
$$
|g_K(x)-g_K(y)|\le \max_{u\in S^{n-1}}b_K(u)|x-y|.
$$
\end{prop}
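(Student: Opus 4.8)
The plan is to reduce the Lipschitz bound to the one-dimensional monotonicity of the X-ray area function $a_K(\cdot,u)$ --- the same fact exploited in the proof of Lemma~\ref{unif1} --- via the intermediate inequality
$$|g_K(x)-g_K(y)|\le g_K(o)-g_K(x-y)\qquad\text{for all }x,y\in\R^n,$$
after which the proposition follows in one more line.

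To establish the intermediate inequality, I would write $g_K(x)-g_K(y)=\int_{\R^n}1_K(w)\bigl(1_{K+x}(w)-1_{K+y}(w)\bigr)\,dw$ and note that $1_{K+x}-1_{K+y}=1_{(K+x)\setminus(K+y)}-1_{(K+y)\setminus(K+x)}$ is a difference of two nonnegative functions, so that
$$g_K(x)-g_K(y)\le V\bigl((K\cap(K+x))\setminus(K+y)\bigr)\le V\bigl((K+x)\setminus(K+y)\bigr).$$
By translation invariance of Lebesgue measure and the elementary evenness of $g_K$, the last quantity equals $V(K)-V\bigl(K\cap(K+(y-x))\bigr)=g_K(o)-g_K(x-y)$. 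Interchanging $x$ and $y$ yields the same upper bound for $g_K(y)-g_K(x)$, which gives the intermediate inequality.

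It then remains to bound $g_K(o)-g_K(x-y)$. If $x=y$ there is nothing to prove, so I would assume $x\neq y$, set $t=|x-y|$ and $v=(x-y)/|x-y|\in S^{n-1}$, and use \eqref{simple} to get
$$g_K(o)-g_K(x-y)=g_K(0,v)-g_K(t,v)=\int_0^t a_K(s,v)\,ds\le t\,a_K(0,v)=|x-y|\,b_K(v),$$
since $a_K(\cdot,v)$ is decreasing with $a_K(0,v)=b_K(v)$; as $b_K(v)\le\max_{u\in S^{n-1}}b_K(u)$, this completes the argument. I do not expect a genuine obstacle here: the only point that needs care is to resist estimating $\int_{\R^n}|1_{K+x}-1_{K+y}|$ directly, which would cost a factor $2$ and hence the optimal constant. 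Keeping the two one-sided volumes $V\bigl((K\cap(K+x))\setminus(K+y)\bigr)$ and $V\bigl((K\cap(K+y))\setminus(K+x)\bigr)$ separate and observing that each is already dominated by the single quantity $g_K(o)-g_K(x-y)$ is exactly what makes the Lipschitz constant sharp.
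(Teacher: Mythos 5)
Your proposal is correct and follows essentially the same route as the paper: the paper also first reduces to $|g_K(x)-g_K(y)|\le g_K(o)-g_K(x-y)$ via the inclusion $\left(K\cap(K+x)\right)\setminus\left(K\cap(K+y)\right)\subset (K+x)\setminus(K+y)$, and then invokes the right-hand inequality of Lemma~\ref{unif1} (itself just the monotonicity of $a_K(\cdot,u)$ together with $a_K(0,u)=b_K(u)$), which is exactly the estimate you re-derive from \eqref{simple}. The only cosmetic difference is that you phrase the set-theoretic step with indicator functions and the paper cites the lemma rather than repeating the one-line monotonicity argument.
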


\begin{proof}
We have
$$
\left(K\cap (K+x)\right)\setminus \left(K\cap (K+y)\right)\subset
(K+x)\setminus (K+y).
$$
This implies
\begin{align*}
  V_n\left(K\cap (K+x)\right)-V_n\left(K\cap (K+y)\right)&\le
  V_n\left(K\setminus(K+y-x)\right)\\&=V_n(K)-V_n\left(K\cap(K+y-x)\right).
\end{align*}
Equivalently,
$g_K(x)-g_K(y)\le g_K(o)-g_K(y-x)=g_K(o)-g_K(x-y)$, and interchanging $x$ and $y$
yields
$$|g_K(x)-g_K(y)|\le g_K(o)-g_K(x-y).$$
Using this and the right-hand inequality
in (\ref{estimDiffq}), we get
$$|g_K(x)-g_K(y)|\le b_K\!\left(\frac{x-y}{|x-y|}\right)|x-y|,$$
and the proposition follows immediately.
\end{proof}

\begin{cor}\label{lipcor}
If $K_0\subset C_0^n$ is a convex body, then for all $x,y\in \R^n$,
$$
|g_{K_0}(x)-g_{K_0}(y)|\le \sqrt{n}|x-y|.
$$
\end{cor}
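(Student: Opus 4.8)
The plan is to deduce this immediately from Proposition~\ref{Matprop}, which reduces the statement to showing that $\max_{u\in S^{n-1}} b_{K_0}(u)\le\sqrt n$ whenever $K_0\subset C_0^n$. First I would invoke the monotonicity of the brightness function under inclusion: since $K_0\subset C_0^n$ implies $K_0|u^\perp\subset C_0^n|u^\perp$, and $(n-1)$-dimensional volume is monotone, we get $b_{K_0}(u)\le b_{C_0^n}(u)$ for every $u\in S^{n-1}$. So it suffices to bound the brightness function of the unit cube $C_0^n$.

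Next I would compute $b_{C_0^n}(u)$ exactly. The surface area measure $S(C_0^n,\cdot)$ is the sum of $2n$ unit Dirac masses placed at the vectors $\pm e_1,\dots,\pm e_n$, since each of the $2n$ facets of $C_0^n$ has $(n-1)$-volume $1$. Substituting this into Cauchy's projection formula \eqref{Cauchy} gives
\[
b_{C_0^n}(u)=\frac12\sum_{i=1}^n\bigl(|u\cdot e_i|+|u\cdot(-e_i)|\bigr)=\sum_{i=1}^n|u_i|,
\]
where $u=(u_1,\dots,u_n)$. By the Cauchy--Schwarz inequality, $\sum_{i=1}^n|u_i|\le\sqrt n\,|u|=\sqrt n$ for $u\in S^{n-1}$, so $\max_{u\in S^{n-1}}b_{C_0^n}(u)=\sqrt n$, attained at $u=n^{-1/2}(1,\dots,1)$. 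Combining this with the monotonicity step and Proposition~\ref{Matprop} yields the corollary.

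There is no genuine obstacle here; the only point requiring a moment's care is that the naive estimate --- bounding $b_{C_0^n}(u)$ by the $(n-1)$-volume of a ball of radius equal to half the diameter $\sqrt n/2$ of the cube --- is far too weak, so one really does need the sharp identity $b_{C_0^n}(u)=\sum_i|u_i|$. One could avoid Cauchy's formula and argue the same computation geometrically: the shadow $C_0^n|u^\perp$ is covered by the projections of the facets of $C_0^n$ whose outer normal $v$ satisfies $u\cdot v\ge 0$, and the areas of these projections sum, with cosine weights $|u\cdot v|$, to $\sum_i|u_i|$.
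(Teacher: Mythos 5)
Your proposal is correct and follows essentially the same route as the paper: apply Proposition~\ref{Matprop}, bound $b_{K_0}$ by $b_{C_0^n}$ via monotonicity of projections, compute $b_{C_0^n}(u)=\sum_{i=1}^n|u_i|$ from Cauchy's projection formula, and finish with Cauchy--Schwarz. The only difference is that you spell out the steps (the surface area measure of the cube and the final inequality) that the paper leaves as ``easy to see.''
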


\begin{proof}
Since $K_0\subset C_0^n$, Proposition~\ref{Matprop} yields
$$
|g_K(x)-g_K(y)|\le \max_{u\in S^{n-1}}b_{C_0^n}(u)|x-y|.
$$
By Cauchy's projection formula (\ref{Cauchy}), for $u=(u_1,u_2,\dots,u_n)\in
S^{n-1}$ we have
$$b_{C_0^n}(u)=V\left(C_0^n|u^{\perp}\right)=\sum_{i=1}^n|u_i|,$$
from which it is easy to see that $b_{C_0^n}(u)\le \sqrt{n}$.
\end{proof}

\subsection{Miscellaneous definitions}

Let $\mu$ and $\nu$ be finite nonnegative Borel measures in $S^{n-1}$.  Define
\begin{equation}\label{Pro}
d_P(\mu,\nu)=\inf\{\ee>0:\mu(E)\le\nu(E_{\ee})+\ee,\
\nu(E)\le\mu(E_{\ee})+\ee,\ E \text{ Borel in } S^{n-1}\},
\end{equation}
where
$$E_\ee=\{u\in S^{n-1}:\exists v\in E: |u-v|<\ee\}.$$
Then $d_P$ is a metric called the {\em Prohorov metric}. As $S^{n-1}$ is a
Polish space, it is enough to take the infimum in \eqref{Pro} over the class of
\emph{closed} sets. In addition, if $\mu(S^{n-1})=\nu(S^{n-1})$, then
\begin{equation}\label{Pro1}
d_P(\mu,\nu)=\inf\{\ee>0:\mu(E)\le\nu(E_{\ee})+\ee,\ E \text{ Borel in }
S^{n-1}\};
\end{equation}
see \cite{Dud}.

We need a condition on a sequence $(u_i)$ in $S^{n-1}$ stronger than denseness
in $S^{n-1}$.  To this end, for $u\in S^{n-1}$ and $0<t\le 2$, let
$$C_t(u)=\{v\in S^{n-1}:|u-v|<t\}$$ be the open spherical cap
with center $u$ and radius $t$. We call $(u_i)$ {\em evenly spread} if for all
$0<t<2$, there is a constant $c=c(t)>0$ and an $N=N(t)$ such that
$$
|\{u_1,\dots,u_k\}\cap C_t(u)|\ge ck,
$$
for all $u\in S^{n-1}$ and $k\ge N$.
Often, we will apply this notion to the symmetrization
\[
  (u_i^*)=(u_1,-u_1,u_2,-u_2,u_3,-u_3,\ldots)
\]
of a sequence $(u_i)$.

Let $p\ge 1$. A family $\{X_{\alpha}:\alpha\in A\}$ of random variables has {\em uniformly bounded $p$th absolute moments} if there is a constant $C$ such that \begin{equation}\label{moments}
E\left(|X_{\alpha}|^p\right)\le C,
\end{equation}
for all $\alpha\in A$.  Of course, if $p$ is an even integer, we can and will omit the word ``absolute."  If $1\le q\le p$ and (\ref{moments}) holds, then it also holds with $p$ replaced by $q$ and $C$ replaced by $C^{q/p}$.

Triangular arrays of random variables of the form $\{X_{ik}: i=1,\dots,m_k; k\in \N\}$  (or, more generally, $\{X_{\alpha k}: \alpha\in A_k; k\in \N\}$) are called {\em row-wise independent} if for each $k$, the family $\{X_{ik}: i=1,\dots,m_k\}$ (or $\{X_{\alpha k}: \alpha\in A_k\}$, respectively) is independent.

\section{The main algorithm for reconstruction
from covariograms}\label{convergence}

We shall assume throughout that the unknown convex body $K_0$ is contained in
the cube $C_0=[-1/2,1/2]^n$, with its centroid at the origin. This assumption
can be justified on both purely theoretical and purely practical grounds. If
the measurements are exact, then from the covariogram, a convex polytope can be
constructed that contains a translate of $K_0$. On the other hand, in practise,
an unknown object whose covariogram is to be measured is contained in some
known bounded region.  In either case, one may as well suppose that $K_0$ is
contained in $C_0^n$, and since in the situations we consider, the covariogram
determines $K_0$ up to translation and reflection in the origin, we can also
fix the centroid at the origin.

We now state the main, second-stage algorithm. Note that it requires, as part of the input, an $o$-symmetric convex polytope that approximates either the Blaschke body $\nabla K_0$ or the difference body $DK_0$ of $K_0$.  These are provided by the first-stage algorithms, Algorithm~NoisyCovBlaschke and Algorithm~NoisyCovDiff($\varphi$), described in Sections~\ref{Algorithm} and~\ref{diffe}, respectively.

The reader should be aware that here, and throughout the paper, double subscripts in expressions such as $x_{ik}$, $M_{ik}$, $N_{ik}$, etc., represent triangular arrays. Thus, for a fixed $k$, the index $i$ varies over a finite set of integers that depends on $k$; and similarly when the first index is labeled by another letter in expressions such as $z_{jk}$, $X_{pk}$, and so on, or is itself represented by a double index, as in $N_{ijk}$.  Phrases such as ``the $N_{ik}$'s are row-wise independent" mean that the corresponding triangular array is row-wise independent, i.e., independent for fixed $k$.

\bigskip

{\large{\bf Algorithm~NoisyCovLSQ}}

\bigskip

{\it Input:} Natural numbers $n\ge 2$ and $k$; noisy covariogram measurements
\begin{equation}\label{meas}
M_{ik}=g_{K_0}(x_{ik})+N_{ik},
\end{equation}
of an unknown convex body $K_0\subset C_0^n$ whose centroid is at the origin,
at the points $x_{ik}$, $i=1,\dots,I_k=(2k+1)^n$ in the cubic array $2C_0^n\cap
(1/k)\Z^n$, where the $N_{ik}$'s are row-wise independent zero mean random
variables with uniformly bounded third absolute moments; an $o$-symmetric convex polytope $Q_k$ in $\R^n$, stochastically
independent of the measurements $M_{ik}$, that approximates either $\nabla K_0$
or $DK_0$, in the sense that, almost surely,
\begin{align}\label{QkdoesIt}
\lim_{k\to\infty} \delta(Q_k,\nabla K_0)=0,\quad\text{ or}\quad
\lim_{k\to\infty} \delta(Q_k,D K_0)=0.
\end{align}

\smallskip

{\it Task:} Construct a convex polytope $P_k$ that approximates $K_0$, up to
reflection in the origin.

\smallskip

{\it Action:}

1.  Compute the outer unit normals $\{\pm u_j: j=1,\dots,s\}$ to the facets of $Q_k$.

2.  For any vector $a=(a_1^+,a_1^-,a_2^+,a_2^-,\dots,a_s^+,a_s^-)$, where
$a_j^+, a_j^-\ge 0$, $j=1,\dots,s$, such that $\sum_{j=1}^s(a_j^+-a_j^-)u_j=o$,
let $P(a)=P(a_1^+,a_1^-,a_2^+,a_2^-,\dots,a_s^+,a_s^-)$ be the convex polytope
with centroid at the origin, facet outer unit normals in $\{\pm u_j: j=1,\ldots,s\}$
and such that the facet with normal $u_j$ (or $-u_j$) has $(n-1)$-dimensional
measure $a_j^+$ (or $a_j^-$, respectively), $j=1,\ldots,s$.

Solve the following  least squares problem:

\begin{equation}\label{obj1}\min
\sum_{i=1}^{I_k}\left(M_{ik}-g_{P(a)\cap C_0^n}(x_{ik})\right)^2
\end{equation}
over the variables $a_1^+,a_1^-,a_2^+,a_2^-,\dots,a_s^+,a_s^-$, subject to the
constraints
$$\sum_{j=1}^s(a_j^+-a_j^-)u_j=o$$ and
$$a_j^+, a_j^-\ge 0, ~~j=1,\dots,s.$$
These constraints guarantee that the output will correspond to a convex
polytope.

3. Let a set of optimal values be
$\hat{a}_1^+,\hat{a}_1^-,\hat{a}_2^+,\hat{a}_2^-,\dots,\hat{a}_s^+,\hat{a}_s^-$,
and call the corresponding polytope $P(\hat{a})$.  Then the output polytope
$P_k$ is the translate of $P(\hat{a})\cap C_0^n$ that has its centroid at the
origin. Note that in this case $-P_k$ also corresponds to a set of optimal
values obtained by switching $a_j^+$ and $a_j^-$, $j=1,\dots,s$.

\bigskip

\begin{lem}\label{polouter}
Let $0<r<R$ and let $Q\in {\mathcal K}^n(r,R)$ be an $o$-symmetric convex
polytope. Then there are facets of $Q$ with outer unit normals
$u_1,\dots,u_n$ such that
\begin{equation}\label{det}
|\det(u_1,\dots,u_n)|>(r/R)^{n(n-1)/2}.
\end{equation}
\end{lem}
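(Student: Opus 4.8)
The plan is to choose the normals $u_1,\dots,u_n$ greedily, one at a time, always picking a facet normal of $Q$ that is ``as orthogonal as possible'' to the span of those already chosen. Concretely, pick any facet normal $u_1$. Having selected $u_1,\dots,u_j$, spanning a subspace $L_j$ of dimension $j<n$, let $v$ be a unit vector orthogonal to $L_j$; I would choose $u_{j+1}$ to be a facet normal of $Q$ maximizing $|u\cdot v|$ over all facet normals $u$ of $Q$. The key quantitative claim is that this maximum is at least $r/R$: indeed, the segment $[-rv,rv]$ lies in $rB^n\subset Q$, so the point $rv$ lies in $Q$ and hence satisfies $rv\cdot u\le h_Q(u)$ for every facet normal $u$; but $Q\subset RB^n$ gives $h_Q(u)\le R$ for the (unit) normal direction realized on that facet, wait — more carefully, since $Q$ is $o$-symmetric with $Q\subset RB^n$, every facet lies in the slab $\{x:|x\cdot u|\le R\}$, so picking the facet whose supporting hyperplane is ``hit'' by the ray through $v$ shows $h_Q(v)\le R$ forces some facet normal $u$ with $v\cdot u\ge h_Q(v)/R\cdot(\dots)$. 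The clean way: $h_Q(v)\ge r$ (as $rv\in Q$), and $h_Q(v)=\max_j (v\cdot w_j)$ over the finitely many facet-defining vectors $w_j$ (outer normals scaled so the facet lies in $\{x\cdot w_j = h_Q(w_j/|w_j|)\}$); since each facet satisfies $h_Q\le R$ in its own normal direction, the facet normal $u$ achieving $h_Q(v)=v\cdot(h_Q(u)u)$ with $h_Q(u)\le R$ yields $v\cdot u = h_Q(v)/h_Q(u)\ge r/R$. Hence at each stage we can take $u_{j+1}$ with $|u_{j+1}\cdot v|\ge r/R$ where $v\perp L_j$.

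Next I would convert this into the determinant bound. Write $u_j = p_j + q_j$, where $p_j$ is the orthogonal projection of $u_j$ onto $L_{j-1}=\lin\{u_1,\dots,u_{j-1}\}$ and $q_j\perp L_{j-1}$. The construction guarantees $|q_j|\ge |u_j\cdot v_{j-1}|\ge r/R$ for the unit vector $v_{j-1}\perp L_{j-1}$ in the direction of $q_j$; actually $|q_j| = |u_j\cdot v_{j-1}|\ge r/R$ exactly. Since adding multiples of $u_1,\dots,u_{j-1}$ to $u_j$ does not change the determinant, $|\det(u_1,\dots,u_n)| = \prod_{j=1}^n |q_j|$ (the Gram–Schmidt / base-times-height expansion of the parallelepiped volume), and each $|q_j|\ge r/R$ except $|q_1|=|u_1|=1$. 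This gives $|\det(u_1,\dots,u_n)|\ge (r/R)^{n-1}$, which is in fact stronger than the asserted $(r/R)^{n(n-1)/2}$, since $r/R<1$ implies $(r/R)^{n-1}\ge (r/R)^{n(n-1)/2}$ for $n\ge 2$. So the weaker stated bound follows a fortiori, and I would simply record the inequality $(r/R)^{n-1}$ and note it implies the claim (or keep the paper's stated exponent for consistency with later uses).

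The one genuine subtlety — the step I'd flag as the main obstacle — is the strictness of the inequality: the lemma asserts a \emph{strict} inequality $|\det(\dots)|>(r/R)^{n(n-1)/2}$, whereas the greedy argument naturally produces $\ge (r/R)^{n-1}$. Strictness is not automatic from the $\ge$ chain, so I would argue it separately: either (a) observe that equality $|q_j|=r/R$ for all $j$ would force, at each stage, the facet through $r v_{j-1}$ to have $h_Q(u_{j+1}) = R$ and the ray to meet it orthogonally-extremally, a degeneracy one can rule out because $Q$ has \emph{finitely many} facets and $rB^n\subset\inte Q$ forces $h_Q(v)>r$ strictly for directions $v$ not normal to a facet — and one can always perturb the choice of $v_{j-1}$ within the orthogonal complement to avoid those finitely many bad directions, or simply note $h_Q(v_{j-1})>r$ strictly since $rB^n\subsetneq Q$ (as $Q$ is a polytope, hence not a ball) unless $Q=rB^n$, impossible; or (b) since $n\ge 2$ the stated exponent $n(n-1)/2$ is \emph{strictly} larger than... no — it is $\le n-1$ only when $n\le 3$; for $n\ge 4$, $n(n-1)/2 > n-1$, so the greedy bound $(r/R)^{n-1}$ is actually \emph{weaker} than the claimed bound for large $n$(!). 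This means the simple greedy estimate is insufficient for $n\ge 4$ and one must work harder — presumably by iterating the construction more cleverly or by using the $o$-symmetry to get a bound like $\prod_{j}(r/R)^{?}$ with better exponents, or by a compactness/continuity argument on the compact set $\mathcal K^n(r,R)$. Reconciling the exponent is the real content, and I would expect the authors' actual proof to use a sharper packing estimate (each successive normal can be chosen within distance controlled by $r/R$ of an orthonormal frame, giving a Gram-determinant lower bound that degrades like $(r/R)^{n(n-1)/2}$ through the cumulative tilting), rather than the crude one-step bound I sketched.
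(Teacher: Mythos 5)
Your greedy strategy is workable and in fact yields a \emph{stronger} bound than the lemma asserts, but your proposal contains two genuine errors, and the second one causes you to wrongly abandon the argument. First, your justification of the key claim is incorrect: $h_Q(v)$ is \emph{not} $\max_j v\cdot w_j$ over the scaled facet normals $w_j=h_Q(u_j)u_j$ (the support function is the maximum of $v\cdot p$ over \emph{vertices} $p$ of $Q$; try the square $C_0^2$ with $v=(1,1)/\sqrt2$). The claim itself is true, but the clean route is through the polar body: since $o\in\inte Q$, one has $Q^*=\conv\{u_j/h_Q(u_j)\}$, and $Q\subset RB^n$ gives $(1/R)B^n\subset Q^*$, hence $\max_j v\cdot u_j/h_Q(u_j)=h_{Q^*}(v)\ge 1/R$; combined with $h_Q(u_j)\ge r$ this gives a facet normal $u_j$ with $v\cdot u_j\ge r/R$, for every unit $v$. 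Second, and more seriously, your closing comparison of exponents is inverted: since $0<r/R<1$, a \emph{larger} exponent produces a \emph{smaller} number, so $(r/R)^{n-1}\ge (r/R)^{n(n-1)/2}$ for all $n\ge 2$, with strict inequality precisely when $n\ge 3$. Your conclusion that ``the greedy bound is weaker than the claimed bound for $n\ge 4$'' is therefore backwards; the Gram--Schmidt estimate $|\det(u_1,\dots,u_n)|=\prod_j|q_j|\ge (r/R)^{n-1}$ proves the lemma (with strict inequality, for free, when $n\ge 3$), and there is no need for the ``sharper packing estimate'' you go looking for. The only residual issue is strictness when $n=2$, where the two exponents coincide and the greedy argument gives only $\ge r/R$; note, however, that the paper's own proof in the base case $n=2$ likewise only establishes $\sin\alpha\ge\sin\theta=r/R$, so this is not a defect specific to your approach, and in any case the applications of the lemma only use the existence of a positive lower bound depending on $r/R$ and $n$.

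For comparison, the paper argues quite differently: it passes to $Q^*\in{\mathcal K}^n(1/R,1/r)$, whose vertices point along the facet normals of $Q$, and inducts on the dimension. One vertex is placed on the $x_n$-axis, the inductive hypothesis is applied to $Q^*|e_n^{\perp}\in{\mathcal K}^{n-1}(1/R,1/r)$ to produce $n-1$ projected vertex directions with determinant at least $(r/R)^{(n-1)(n-2)/2}$, and lifting back costs a factor $\cos\alpha_i\ge r/R$ per vector, giving the exponent $(n-1)(n-2)/2+(n-1)=n(n-1)/2$. The cumulative tilting in that induction is exactly why the paper's exponent is quadratic in $n$; your one-pass orthogonalization avoids it and gives the linear exponent $n-1$. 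So, once the two errors above are repaired, your proof is not only valid but sharper than the paper's.
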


\begin{proof}
The polar body $Q^*$ of $Q$ is contained in ${\mathcal K}^n(1/R,1/r)$ and has
its vertices in the directions of the outer unit normals to the facets of $Q$, so it
suffices to prove that there are vertices $v_1,\dots,v_n$ of $Q^*$ such that
with $u_i=v_i/|v_i|$, (\ref{det}) holds.

The proof will be by induction on $n$.  Let $n=2$.  We may assume that $Q^*$
has a vertex, $v_1$ say, on the positive $x_2$-axis.  Since $Q^*\in {\mathcal
K}^2(1/R,1/r)$, there must be another vertex $v_2$ of $Q^*$ with distance at
least $1/R$ from the $x_2$-axis, and by the symmetry of $Q^*$, such that also
$v_2\cdot e_2\ge 0$.  If $\alpha$ is the angle between $v_1$ and $v_2$, we must
then have $\theta\le\alpha\le\pi/2$, where $\theta$ is the angle between the
vectors $(0,1/r)$ and $\left(1/R,\sqrt{(1/r^2)-(1/R^2)}\right)$.  Then, if
$u_i=v_i/|v_i|$ for $i=1,2$, we have
$$|\det(u_1,u_2)|=\sin\alpha\ge \sin\theta=r/R,$$
which proves (\ref{det}) for $n=2$.

Suppose that (\ref{det}) holds with $n$ replaced by $n-1$ and let  $Q^*\in
{\mathcal K}^n(1/R,1/r)$.  We may assume that $Q^*$ has a vertex, $v_1$ say, on
the positive $x_n$-axis, so that $v_1/|v_1|=e_n$.  Since $Q^*|e_n^{\perp}\in
{\mathcal K}^{n-1}(1/R,1/r)$ (where we are identifying $e_n^{\perp}$ with
$\R^{n-1}$), by the inductive hypothesis, there are vertices $w_2,\dots,w_n$ of
$Q^*|e_n^{\perp}$ such that if $z_i=w_i/|w_i|$, $i=2,\dots,n$, then
\begin{equation}\label{detind}
|\det(z_2,\dots,z_n)|\ge (r/R)^{(n-1)(n-2)/2}.
\end{equation}
Let $v_i$ be a vertex of $Q^*$ such that $v_i|e_n^{\perp}=w_i$, $i=2,\dots,n$,
and let $u_i=v_i/|v_i|$, $i=1,\dots,n$.  By the symmetry of $Q^*$, we may also
assume that $v_i\cdot e_n\ge 0$ for $i=2,\dots,n$.  Let $\alpha_i$ be the angle
between $v_i$ and $w_i$, for $i=2,\dots,n$. Using the fact that
$Q^*|e_n^{\perp}\in {\mathcal K}^{n-1}(1/R,1/r)$, we see that each $v_i$,
$i=2,\dots,n$ has distance at least $1/R$ from the $x_n$-axis.  Therefore
$\cos\alpha_i\ge\sin\theta= r/R$ for $i=2,\dots,n$.  Then, using (\ref{detind})
and noting that $u_1=e_n$ and $u_i=u_i|e_n^{\perp}+(u_i\cdot e_n)e_n$ for
$i=2,\dots,n$, we obtain
\begin{eqnarray*}
|\det(u_1,\dots,u_n)|&=&|\det(u_2|e_n^{\perp},\dots,u_n|e_n^{\perp})|\\
&=& |\det(z_2,\dots,z_n)|\prod_{i=2}^n\cos\alpha_i\\
&\ge & (r/R)^{(n-1)(n-2)/2}(r/R)^{n-1}=(r/R)^{n(n-1)/2}.
\end{eqnarray*}
\end{proof}

\begin{lem}\label{inrad}
Let $K\in {\mathcal K}^n(r,R)$, let $0<\ee<\kappa_{n-1}r^{n-1}/2$, and let $L$
be a convex body containing the origin in $\R^n$ such that
\begin{equation}\label{as}
d_P(S(K,\cdot),S(L,\cdot))<\ee.
\end{equation}
Then there is a constant $a_1$ depending only on $\ee$, $r$, and $R$ such that
$L\subset a_1B^n$.  If $L$ is $o$-symmetric, there is also a constant $a_0>0$
depending only on $\ee$, $r$, and $R$ such that $a_0B^n\subset L$.
\end{lem}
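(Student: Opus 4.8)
The plan is to pass from closeness of the surface area measures to a two-sided bound on the brightness function $b_L=h_{\Pi L}$, and then to deduce that $L$ itself lies between two balls. Write $\mu=S(K,\cdot)$ and $\nu=S(L,\cdot)$, so $d_P(\mu,\nu)<\ee$. Taking $E=S^{n-1}$ in \eqref{Pro} gives $|S(K)-S(L)|<\ee$, hence $S(L)\le n\kappa_nR^{n-1}+\ee$ by $K\subset RB^n$ and \eqref{Cauchynew}. By Cauchy's projection formula \eqref{Cauchy}, $2b_L(u)=\int_{S^{n-1}}|u\cdot v|\,d\nu(v)$ and likewise for $b_K$, where $v\mapsto|u\cdot v|$ is $1$-Lipschitz and valued in $[0,1]$. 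Combining the layer-cake identity $\int|u\cdot v|\,d\lambda=\int_0^1\lambda(\{|u\cdot v|>t\})\,dt$ with the Prohorov inequality and the $1$-Lipschitz property of $v\mapsto|u\cdot v|$, one controls $\|b_L-b_K\|_\infty$ by a constant depending only on $\ee,r,R$; since $rB^n\subset K\subset RB^n$ gives $\kappa_{n-1}r^{n-1}\le b_K(u)\le\kappa_{n-1}R^{n-1}$ for every $u$, this yields
$$0<c\le b_L(u)\le M\qquad(u\in S^{n-1}),$$
equivalently $cB^n\subset\Pi L\subset MB^n$, with $c,M$ depending only on $\ee,r,R$. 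Positivity of $c$ is where the hypothesis $\ee<\kappa_{n-1}r^{n-1}/2$ is used: because $rB^n\subset K$, the measure $S(K,\cdot)$ puts mass bounded below --- uniformly in $u$ and independently of $R$ --- on $\{v:|u\cdot v|\ge1/2\}$, and then $b_L(u)\ge\frac14 S(L,\{v:|u\cdot v|\ge1/2\})$ cannot collapse once $\ee$ is a fixed fraction of $\kappa_{n-1}r^{n-1}$.

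To pass from $\Pi L$ to $L$ I would invoke John's theorem (see \cite{Sch93}): there is an ellipsoid $\mathcal E$, which may be taken centered at the origin (both $b$ and $S(\cdot,\cdot)$ are translation invariant and $o\in L$), with semi-axes $\sigma_1\ge\cdots\ge\sigma_n>0$ and $\mathcal E\subset L\subset n\mathcal E$ ($\sqrt n$ in place of $n$ when $L$ is $o$-symmetric). Since projections commute with dilations, $c'\le b_{\mathcal E}(u)\le M$ for all $u$, with $c'>0$ depending only on $c$ and $n$. In principal axes $b_{\mathcal E}(u)=\kappa_{n-1}(\sigma_1\cdots\sigma_n)\big(\sum_j u_j^2/\sigma_j^2\big)^{1/2}$, so
$$\kappa_{n-1}\,\sigma_2\cdots\sigma_n=\min_u b_{\mathcal E}(u)\ge c',\qquad \kappa_{n-1}\,\sigma_1\cdots\sigma_{n-1}=\max_u b_{\mathcal E}(u)\le M.$$
These two relations, together with $\sigma_1\ge(\sigma_2\cdots\sigma_n)^{1/(n-1)}$ and $\sigma_n\le(\sigma_2\cdots\sigma_n)^{1/(n-1)}$, give by elementary manipulation $\sigma_1\le a_1'$ and, for $o$-symmetric $L$, $\sigma_n\ge a_0'>0$, with $a_0',a_1'$ depending only on $c',M,n$. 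Hence $L\subset n\mathcal E\subset a_1B^n$, and in the $o$-symmetric case $a_0B^n\subset\mathcal E\subset L$, with $a_0,a_1$ depending only on $\ee,r,R$.

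The hard part is the geometric principle behind the second step: that a two-sided bound on all $(n-1)$-dimensional shadows of a convex body traps it between two balls. A mere upper bound on $b_L$ does \emph{not} bound the diameter when $n\ge3$ (a long thin needle has all shadows small), so the lower bound on $b_L$ --- available only thanks to $rB^n\subset K$ together with $\ee<\kappa_{n-1}r^{n-1}/2$ --- is indispensable; and making the lower estimate in the first step genuinely strong enough (the bare Lipschitz-Prohorov bound on $\|b_L-b_K\|_\infty$ is too weak when $S(K)$ is large) is the other delicate point.
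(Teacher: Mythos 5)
Your overall architecture is the same as the paper's: pass from \eqref{as} to a two-sided bound on $b_L=h_{\Pi L}$, and then from $\Pi L$ being sandwiched between balls to $L$ being sandwiched between balls. For the second step the paper simply quotes the argument of \cite[Lemma~4.2]{GarM03}; your route via John's theorem and the explicit formula for the brightness function of an ellipsoid is a legitimate, self-contained alternative (the algebra with the $\sigma_j$'s does close: $\sigma_n^{n-1}\le\sigma_1\cdots\sigma_{n-1}\le M/\kappa_{n-1}$ bounds $\sigma_n$ above, $\sigma_1^{n-1}\ge\sigma_2\cdots\sigma_n\ge c'/\kappa_{n-1}$ bounds $\sigma_1$ below, and $\sigma_1/\sigma_n\le M/c'$ finishes both bounds). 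You are also right that the lower bound on $b_L$ is indispensable even for the upper bound on $L$ when $n\ge 3$.

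The genuine gap is in your first step, exactly where you sense trouble. The layer-cake/Prohorov computation gives $2b_K(u)\le 2b_L(u)+\ee\left(S(L)+1\right)$, so the resulting lower bound on $b_L$ is $\kappa_{n-1}r^{n-1}-\ee\left(S(L)+1\right)/2$; since $S(L)\le S(K)+\ee$ can be of order $R^{n-1}$, this need not be positive under the stated hypothesis $\ee<\kappa_{n-1}r^{n-1}/2$. Your proposed rescue is false: for $K=\conv\left(rB^n\cup\{\pm Re_1\}\right)\in{\mathcal K}^n(r,R)$ and $u=e_1$, every boundary point other than the two apexes $\pm Re_1$ has all its outer unit normals $v$ satisfying $|v\cdot e_1|\le r/R$, so $S(K,\{v:|u\cdot v|\ge 1/2\})=0$ as soon as $R>2r$; there is no lower bound on this mass that is uniform in $u$ and independent of $R$. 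The paper closes this step with a different tool: by Cauchy's projection formula, $b_K(u)-b_L(u)=\int_{S^{n-1}}\tfrac12|u\cdot v|\,d\left(S(K,\cdot)-S(L,\cdot)\right)(v)$, and since $v\mapsto|u\cdot v|/2$ has bounded-Lipschitz norm $1$, this is at most $d_D(S(K,\cdot),S(L,\cdot))$, which by \cite[Corollary~2]{Dud} is at most $2d_P(S(K,\cdot),S(L,\cdot))<2\ee$ --- with no total-mass factor. The hypothesis $\ee<\kappa_{n-1}r^{n-1}/2$ then gives $b_L\ge\kappa_{n-1}r^{n-1}-2\ee>0$ immediately. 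Replacing your layer-cake estimate by this Dudley-metric comparison repairs the proof.
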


\begin{proof}
Using (\ref{projbod}) and \eqref{Cauchy}, we obtain
\begin{equation}\label{rev21}
|h_{\Pi K}(u)-h_{\Pi L}(u)|=|b_K(u)-b_L(u)|\le d_D(S(K,\cdot),S(L,\cdot)).
\end{equation}
Here $d_D$ is the Dudley metric, defined by
$$
d_D(\mu,\nu)=\sup\left\{\left|\int_{S^{n-1}} f\,d(\mu-\nu)\right|:
\|f\|_{BL}\le 1\right\},
$$
where for any real-valued function $f$ on $S^{n-1}$ we define
$$\|f\|_{L}=\sup_{u\ne v}\frac{|f(u)-f(v)|}{|u-v|}\quad {\text{and}}\quad
\|f\|_{BL}=\|f\|_\infty+\|f\|_{L}.$$
(Note that for any $u\in S^{n-1}$, the function $f(v)=|u\cdot v|/2$, $v\in
S^{n-1}$ satisfies $\|f\|_{BL}= 1$.) By \cite[Corollary 2]{Dud}, we have
the relation
\begin{equation}\label{DudPro}
d_D(\mu,\nu)\le 2 d_P(\mu,\nu),
\end{equation} for finite nonnegative Borel
measures $\mu$ and $\nu$ in $S^{n-1}$.  Now (\ref{rev21}), (\ref{DudPro}), and (\ref{as}) yield
$$|h_{\Pi K}(u)-h_{\Pi L}(u)|\le
2d_P(S(K,\cdot),S(L,\cdot))<2\ee,$$ for each
$u\in S^{n-1}$.

Since $K\in {\mathcal K}^n(r,R)$, we have $\Pi K\in {\mathcal
K}^n\left(\kappa_{n-1}r^{n-1},\kappa_{n-1}R^{n-1}\right)$, so $\Pi L\in
{\mathcal K}^n(\kappa_{n-1}r^{n-1}-2\ee,\kappa_{n-1}R^{n-1}+2\ee)$. Now exactly
the same argument as in the proof of Lemma~4.2 of \cite{GarM03}, beginning with
formula (16) in that paper, yields the existence of $a_1$ and $a_0$.  (The
assumption of $o$-symmetry made in \cite{GarM03} is only needed for the latter.
Explicit values for $a_0$ and $a_1$ can be given in terms of $\ee$, $r$, and
$R$, but we do not need them here.)
\end{proof}

\begin{lem}\label{Prohorov}
Let $K$ be a convex body in $\R^n$.  Then there is an $\ee_0>0$ such that for
all $0<\ee<\ee_0$, if $Q$ is an $o$-symmetric convex polytope in $\R^n$ such
that either
\begin{equation}\label{Blasch}
d_P(S(\nabla K,\cdot),S(Q,\cdot))<\ee
\end{equation}
or
\begin{equation}\label{Diff}
d_P(S(DK,\cdot),S(Q,\cdot))<\ee,
\end{equation}
then there is a constant \refstepcounter{Cnmb}\label{ca} $c_{\arabic{Cnmb}}>0$
depending only on $K$ and a convex polytope $J$ whose facets are each parallel
to some facet of $Q$, such that
\begin{equation}\label{Blacon}
d_P(S(K,\cdot),S(J,\cdot))<c_{\ref{ca}}\ee.
\end{equation}
\end{lem}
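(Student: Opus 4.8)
The plan is to build $J$ by transporting the surface area measure of $K$ onto the facet normals of $Q$ and then adding a small correction so that the Minkowski centroid condition is restored. Write $B$ for whichever of $\nabla K$ and $DK$ satisfies the relevant hypothesis. Two facts will be used throughout: $B$ is an $o$-symmetric convex body determined by $K$, so $rB^n\subset B\subset RB^n$ for some $0<r<R$ depending only on $K$; and $S(K,\cdot)\le 2\,S(B,\cdot)$, since $S(\nabla K,\cdot)=\tfrac12 S(K,\cdot)+\tfrac12 S(-K,\cdot)$ by \eqref{BB}, while $S(DK,\cdot)\ge S(K,\cdot)$ because expanding the surface area measure of $K+(-K)$ into mixed surface area measures produces $S(K,\cdot)$ plus nonnegative terms.

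First I would fix $\ee_0>0$, depending only on $K$, small enough that $\ee_0<\kappa_{n-1}r^{n-1}/2$ and that, for $0<\ee<\ee_0$, Lemma~\ref{inrad} (applied with $B$ in place of $K$ and $L=Q$, which has nonempty interior once $\ee_0$ is small) forces $a_0B^n\subset Q\subset a_1B^n$ with $0<a_0<a_1$ depending only on $K$. By Lemma~\ref{polouter}, $Q$ then has facets with outer unit normals $u_{j_1},\dots,u_{j_n}$ satisfying $|\det(u_{j_1},\dots,u_{j_n})|>(a_0/a_1)^{n(n-1)/2}=:c_\ast>0$. List all facet normals of $Q$ as $\{\pm u_1,\dots,\pm u_s\}$, let $f\colon S^{n-1}\to\{\pm u_1,\dots,\pm u_s\}$ be a Borel map sending each $v$ to a nearest element of this finite set, and define the discrete measure $\mu_1$ on $S^{n-1}$ by $\mu_1(A)=S(K,f^{-1}(A))$, which is carried by the facet normals of $Q$ and has total mass $S(K)$.

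The core step is to show that $\mu_1$ is within $c\,\ee$ of $S(K,\cdot)$ in the Prohorov metric and that its centroid $w:=\int_{S^{n-1}}v\,d\mu_1(v)$ has $|w|\le c\,\ee$; this is where the domination $S(K,\cdot)\le 2S(B,\cdot)$ is essential, and I expect it to be the main obstacle. The point is that the set $E=\{v:\,|v-f(v)|>3\ee\}$ (equivalently, $v$ at distance more than $3\ee$ from $\{\pm u_1,\dots,\pm u_s\}$) has $E_\ee$ disjoint from $\{\pm u_j\}$, so $S(Q,E_\ee)=0$, whence $S(B,E)\le\ee$ by the definition of $d_P$ and therefore $S(K,E)\le 2\ee$. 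Thus $f$ moves all but at most $2\ee$ of the mass of $S(K,\cdot)$ by at most $3\ee$, and the remainder by at most $\diam S^{n-1}=2$. A direct estimate of $d_P$ then gives $d_P(S(K,\cdot),\mu_1)\le 6\ee$, and, using the classical identity $\int_{S^{n-1}}v\,dS(K,v)=o$, one gets $|w|\le\int_{S^{n-1}}|f(v)-v|\,dS(K,v)\le 3\,S(K)\,\ee+4\ee$. Both bounds are linear in $\ee$ with constants depending only on $K$, which is what ultimately makes the conclusion hold with $c_{\ref{ca}}\ee$ rather than only $O(\sqrt\ee)$.

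Finally I would correct the centroid. Writing $-w=\sum_{l=1}^n\lambda_l u_{j_l}$ and using $|\det(u_{j_1},\dots,u_{j_n})|>c_\ast$ gives $\sum_l|\lambda_l|\le c\,|w|/c_\ast=O(\ee)$; split $\lambda_l=\lambda_l^+-\lambda_l^-$ with $\lambda_l^\pm\ge 0$ and put
$$
\mu_2=\mu_1+\sum_{l=1}^n\Bigl((\lambda_l^++\ee)\,\delta_{u_{j_l}}+(\lambda_l^-+\ee)\,\delta_{-u_{j_l}}\Bigr).
$$
Then $\mu_2\ge 0$ is carried by the facet normals of $Q$, $\int_{S^{n-1}}v\,d\mu_2=w+\sum_l\lambda_l u_{j_l}=o$, and $\supp\mu_2\supset\{\pm u_{j_1},\dots,\pm u_{j_n}\}$, which spans $\R^n$; so by Minkowski's existence theorem (see \cite{Sch93}) there is a convex polytope $J$ with $S(J,\cdot)=\mu_2$, and each facet of $J$ has outer unit normal in $\{\pm u_1,\dots,\pm u_s\}$, hence is parallel to a facet of $Q$. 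Since $\mu_2-\mu_1$ is a nonnegative measure of total mass $O(\ee)$, we have $d_P(\mu_1,\mu_2)=O(\ee)$, and the triangle inequality yields $d_P(S(K,\cdot),S(J,\cdot))<c_{\ref{ca}}\ee$ for a constant $c_{\ref{ca}}$ depending only on $K$. The roles of Lemmas~\ref{inrad} and~\ref{polouter} are precisely to make this last correction possible without inflating the error.
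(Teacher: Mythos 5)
Your proposal is correct and follows essentially the same route as the paper's proof: push $S(K,\cdot)$ forward onto the facet normals of $Q$ via a nearest-point (Voronoi) map, bound the Prohorov distance and the centroid defect using the key fact that $S(K,E)\le 2\ee$ whenever $E_\ee$ misses all facet normals, correct the centroid via Cramer's rule using the well-spread normals supplied by Lemmas~\ref{inrad} and~\ref{polouter}, and invoke Minkowski's existence theorem. The only differences are cosmetic (a $3\ee$ cap radius instead of $\ee$, splitting the correction coefficients into positive and negative parts rather than flipping signs of the chosen normals, and the small $\ee$-atoms added to guarantee the support of the corrected measure spans $\R^n$ --- a detail the paper leaves implicit).
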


\begin{proof}
We choose $\ee_0>0$ so that Lemma~\ref{inrad} holds when $\ee$ is replaced by
$\ee_0$ and $K$ is replaced by either $\nabla K$ or $DK$, as appropriate.  Let
$0<\ee<\ee_0$.

Let $\pm u_1,\dots,\pm u_s$ be the outer unit normals to the facets of $Q$ and
for $i=s+1,\dots,2s$, let $u_i=-u_{i-s}$. Set $I=\{1,\dots,2s\}$.

Suppose that (\ref{Blasch}) holds.  By (\ref{Pro}), $S(\nabla K,E)<
S(Q,E_{\ee})+\ee$ for each Borel subset $E$ of $S^{n-1}$. If $E_{\ee}\cap
\cup_{i\in I} \{u_i\}=\emptyset$, we have $S(Q,E_{\ee})=0$.  This implies that
$S(\nabla K,E)<\ee$ and so by (\ref{BB}),
\begin{equation}\label{smallDK}
S(K,E)<2\ee.
\end{equation}
If instead (\ref{Diff}) holds, then (\ref{Pro}) implies that $S(DK,E)<
S(Q,E_{\ee})+\ee$ for each Borel subset $E$ of $S^{n-1}$.  Then, if
$E_{\ee}\cap \cup_{i\in I}\{u_i\}=\emptyset$, we have $S(DK,E)<\ee$. By
\cite[(5.1.17), p.~275]{Sch93},
$$S(DK,E)=S(K+(-K),E)=S(K,E)+\sum_{j=1}^{n-1} {n-1 \choose j}
S(K,n-1-j;-K,j,E),$$ where $S(K,n-1-j;-K,j,\cdot)$ denotes the mixed area
measure of $n-1-j$ copies of $K$ and $j$ copies of $-K$. Since all these terms
are nonnegative, we obtain $S(K,E)<\ee$ and so (\ref{smallDK}) holds again.

For $i\in I$, let
$$V_i=\{u\in S^{n-1} : |u-u_i|\leq |u-u_j|\text{ for each $j\in I$, $j\neq i$}\}$$
be the Voronoi cell in $S^{n-1}$ containing $u_i$.  Choose Borel sets $W_i$
such that $\relint V_i\subset W_i\subset V_i$ for each $i$ and $W_i\cap
W_j=\emptyset$ for $i\neq j$, so that $\{W_i: i\in I\}$ forms a partition of
$S^{n-1}$.

Let $a_i=S(K,W_i)$ and let $w=\sum_{i\in I}a_iu_i$.  Since $S(K,\cdot)$ is
balanced, i.e.,
$$
\int_{S^{n-1}}u\,dS(K,u)=o,
$$
we have
\begin{eqnarray*}
w=\sum_{i\in I}a_iu_i&=&\sum_{i\in I}u_i\int_{W_i}\,dS(K,u)-\int_{S^{n-1}}u\,dS(K,u)\\
&=&\sum_{i\in I}\int_{W_i}(u_i-u)\,dS(K,u).
\end{eqnarray*}
For each $u\in S^{n-1}$ and $t>0$, let $C_{t}(u)=\{v\in S^{n-1}:|u-v|\le t\}$.
Let $W=\cup_{i\in I}(W_i\setminus C_{\ee}(u_i))$. Then $u_i\not\in W_{\ee}$ for
$i\in I$, so (\ref{smallDK}) implies that $S(K,W)<2\ee$.  Using this, we obtain
\begin{eqnarray}
|w|&=&\left|\sum_{i\in I} \int_{W_i\cap C_{\ee}(u_i)}(u_i-u)\,dS(K,u)+
\sum_{i\in I}\int_{W_i\setminus C_{\ee}(u_i)} (u_i-u)\,dS(K,u)\right|\nonumber\\
&\le &\sum_{i\in I} \int_{W_i\cap C_{\ee}(u_i)}|u_i-u|\,dS(K,u)+2
\int_{W} d S(K,u)\nonumber\\
&<&\ee S(K,S^{n-1})+4\ee=(S(K)+4)\ee.\label{w}
\end{eqnarray}

Since $Q$ is $o$-symmetric, we can apply Lemma~\ref{inrad} (with $K$ and $L$
replaced by $\nabla K$ (or $DK$) and $Q$, respectively) and
Lemma~\ref{polouter} to conclude that there exist outer unit normals
$u_{i_1},\dots,u_{i_n}$ to facets of $Q$ such that
\refstepcounter{Cnmb}\label{cr0}
$|\det(u_{i_1},\dots,u_{i_n})|>{c_{\arabic{Cnmb}}}$, where $c_{\ref{cr0}}$ depends
only on $K$.  In particular, $u_{i_1},\dots,u_{i_n}$ forms a basis for $\R^n$,
so there exist real numbers $b_{i_1},\dots,b_{i_n}$ such that
$$-w=\sum_{j=1}^nb_{i_j}u_{i_j}.$$
Replacing $u_{i_j}$ by $-u_{i_j}$, if necessary, we may assume that $b_{i_j}>0$
for $j=1,\dots,n$. By Cramer's rule, we obtain $b_{i_j}\le
|w|/|\det(u_{i_1},\dots,u_{i_n})|<|w|/c_{\ref{cr0}}$, for $j=1,\dots,n$. Define
$b_i=0$ for each $i\in I$ such that $i\not\in\{i_1,\dots,i_n\}$. Then, by
(\ref{w}), \refstepcounter{Cnmb}\label{cr1}
\begin{equation}\label{b}
\sum_{i\in I}b_i\le n|w|/c_{\ref{cr0}}<c_{\arabic{Cnmb}}\ee,
\end{equation}
where $c_{\ref{cr1}}$ depends only on $K$.

Let
$$\mu_0=\sum_{i\in I}a_i\delta_{u_i}~~{\text{and}}~~\mu_1
=\sum_{i\in I}b_i\delta_{u_i},$$ and let $\mu=\mu_0+\mu_1$.  Then the support
of $\mu$ is not contained in a great sphere, and since
$$\int_{S^{n-1}}u\,d\mu(u)=\sum_{i\in I}(a_i+b_i)u_i=w-w=o,$$
$\mu$ is balanced. By Minkowski's existence theorem
\cite[Theorem~A.3.2]{Gar95a}, there is a convex polytope $J$ such that
$S(J,\cdot)=\mu$.  By its definition, each facet of $J$ is parallel to a facet
of $Q$.

It remains to prove (\ref{Blacon}). Using \eqref{b}, we obtain
\begin{eqnarray*}
 d_P(S(J,\cdot),S(K,\cdot))&=&d_P(\mu_0+\mu_1,S(K,\cdot))
\le d_P(\mu_0+\mu_1,\mu_0)+ d_P(\mu_0,S(K,\cdot))\\
&=& d_P(\mu_1,0)+ d_P(\mu_0,S(K,\cdot))< c_{\ref{cr1}}\ee+
d_P(\mu_0,S(K,\cdot)), \end{eqnarray*} where $0$ is the zero measure in
$S^{n-1}$. In view of $\mu_0(S^{n-1})=S(K,S^{n-1})$ and \eqref{Pro1}, it is
therefore enough to find a constant \refstepcounter{Cnmb}\label{cr2}
$c_{\arabic{Cnmb}}$, depending only on $K$, such that
\begin{equation}\label{p1}
\mu_0(E) < S(K,E_{c_{\ref{cr2}}\ee})+c_{\ref{cr2}}\ee,
\end{equation}
for any Borel set $E$ in $S^{n-1}$. Let $X=\cup\{W_i:u_i\in E\} \setminus
E_\ee$. We have
\begin{eqnarray}
S(K,E_\ee)&\geq & S\left(K,E_\ee\cap (\cup\{W_i:u_i\in E\})\right)\nonumber\\
&=&\sum \{S(K,W_i):u_i\in E\} -S(K,X)=\mu_0(E)-S(K,X)\label{y}.
\end{eqnarray}
If $x\in X$, then for some $i$ with $u_i\in E$ we have $x\in W_i$, and so
$|x-u_i|\ge \ee$ since $x\not\in E_{\ee}$.
 Moreover, if $j\neq i$, then $|u_j-x|\ge
|u_i-x|\ge\ee$.  Hence $\cup_{i\in I}\{u_i\}\cap X_{\ee}=\emptyset$,
 and by (\ref{smallDK}), we have $S(K,X)< 2\ee$.
Now (\ref{y}) implies that \eqref{p1} holds with $c_{\ref{cr2}}=2$.
\end{proof}

For a fixed finite set $z_1,\dots,z_q$ of points in $\R^n$, define a pseudonorm
$|\cdot|_q$ by
\begin{equation}\label{pseudo}
|f|_q=\left(\frac{1}{q}\sum_{i=1}^qf(z_i)^2\right)^{1/2},
\end{equation}
where $f$ is any real-valued function on $\R^n$. For a convex body $K$
contained in $C_0^n$, vector $\vz_q=(z_1,\dots,z_q)$ of the points
$z_1,\dots,z_q$ in $\R^n$, and vector $\vX_q=(X_1,\ldots,X_q)$ of random
variables $X_1,\dots,X_q$, let
\begin{equation}\label{psi}
\Psi(K,\vz_q,\vX_q)=\frac{1}{q}\sum_{i=1}^qg_K(z_i)X_i.
\end{equation}

\begin{lem}\label{lem1}
Let $k\in\N$ and let $K_0\subset C_0^n$ be a convex body with its centroid at
the origin.  Suppose that $P_k$ is an output from Algorithm~NoisyCovLSQ as
stated above. Let $P(a)$ be any convex polytope admissible for the minimization
problem (\ref{obj1}). Then
\begin{equation}\label{normbound}
\left|g_{K_0}-g_{P_k}\right|_{I_k}^2\le 2\Psi(P_k,\vx_{I_k},\vN_{I_k})-2\Psi
(P(a)\cap C_0^n,\vx_{I_k},\vN_{I_k})+\left|g_{K_0}-g_{P(a)\cap
C_0^n}\right|_{I_k}^2,
\end{equation}
where for each $k\in\N$, $|\cdot|_{I_k}$ and $\Psi(K,\vx_{I_k},\vN_{I_k})$ are
defined by (\ref{pseudo}) and (\ref{psi}), respectively, with $q=I_k$,
$\vx_{I_k}=(x_{1k},\dots,x_{I_kk})$, and $\vN_{I_k}=(N_{1k},\ldots,N_{I_kk})$.
\end{lem}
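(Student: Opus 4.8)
The plan is to use nothing more than the defining optimality property of the least squares output $P_k$, together with the noise model $M_{ik}=g_{K_0}(x_{ik})+N_{ik}$ and an expansion of a sum of squares; this is the ``basic inequality'' familiar from least squares (or $M$-)estimation.

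First I would observe that, since the covariogram is invariant under translations and $P_k$ is by construction the translate of $P(\hat a)\cap C_0^n$ having its centroid at the origin, we have $g_{P_k}=g_{P(\hat a)\cap C_0^n}$. Hence the minimum value of the objective in (\ref{obj1}), which is attained at the optimal $\hat a$, equals $\sum_{i=1}^{I_k}(M_{ik}-g_{P_k}(x_{ik}))^2$. Since $P(a)$ is admissible for (\ref{obj1}), optimality of $\hat a$ then gives
$$\sum_{i=1}^{I_k}\bigl(M_{ik}-g_{P_k}(x_{ik})\bigr)^2\le \sum_{i=1}^{I_k}\bigl(M_{ik}-g_{P(a)\cap C_0^n}(x_{ik})\bigr)^2 .$$

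Next I would substitute $M_{ik}-g_{P_k}(x_{ik})=\bigl(g_{K_0}(x_{ik})-g_{P_k}(x_{ik})\bigr)+N_{ik}$ on the left and the analogous identity with $P(a)\cap C_0^n$ in place of $P_k$ on the right, expand both squares, and cancel the common term $\sum_{i} N_{ik}^2$. Collecting the two cross terms, the surviving contribution is $2\sum_{i}\bigl(g_{P_k}(x_{ik})-g_{P(a)\cap C_0^n}(x_{ik})\bigr)N_{ik}$; dividing through by $I_k$ and rewriting the three resulting terms by means of the definitions (\ref{pseudo}) of $|\cdot|_{I_k}$ and (\ref{psi}) of $\Psi$ (using $\tfrac1{I_k}\sum_i g_{P_k}(x_{ik})N_{ik}=\Psi(P_k,\vx_{I_k},\vN_{I_k})$, and likewise for $P(a)\cap C_0^n$) yields (\ref{normbound}) exactly.

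There is essentially no obstacle here: the argument is a one-line optimality inequality followed by elementary algebra. The only points that require a moment's care are the translation-invariance identity $g_{P_k}=g_{P(\hat a)\cap C_0^n}$, needed so that the minimized objective can be expressed through $g_{P_k}$ rather than through $g_{P(\hat a)\cap C_0^n}$, and the bookkeeping that matches the single surviving cross term with the difference $2\Psi(P_k,\vx_{I_k},\vN_{I_k})-2\Psi(P(a)\cap C_0^n,\vx_{I_k},\vN_{I_k})$.
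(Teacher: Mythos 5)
Your proposal is correct and is essentially identical to the paper's proof: both rest on the translation invariance $g_{P_k}=g_{P(\hat a)\cap C_0^n}$, the optimality inequality comparing $P(\hat a)$ with the admissible $P(a)$, and the substitution $M_{ik}=g_{K_0}(x_{ik})+N_{ik}$ followed by expansion and cancellation. Nothing is missing.
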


\begin{proof}
If $P(\hat{a})\cap C_0^n$ is a solution of (\ref{obj1}), then since
$g_{P_k}=g_{P(\hat{a})\cap C_0^n}$, we obtain
$$
\sum_{i=1}^{I_k}\left(M_{ik}-g_{P_k}(x_{ik})\right)^2\le
\sum_{i=1}^{I_k}\left(M_{ik}-g_{P(a)\cap C_0^n}(x_{ik})\right)^2,
$$
Substituting for $M_{ik}$ from (\ref{meas}) and rearranging, we obtain
\begin{eqnarray*}
\sum_{i=1}^{I_k}\left(g_{K_0}(x_{ik})-g_{P_k}(x_{ik})\right)^2&\le &
2\sum_{i=1}^{I_k}g_{P_k}(x_{ik})N_{ik}-2\sum_{i=1}^{I_k}g_{P(a)\cap
C_0^n}(x_{ik})N_{ik}+\\
& & +\sum_{i=1}^{I_k}\left(g_{K_0}(x_{ik})-g_{P(a)\cap C_0^n}(x_{ik})\right)^2.
\end{eqnarray*}
In view of (\ref{pseudo}) and (\ref{psi}), this is the required inequality.
\end{proof}

Let $K$ be any convex body in $\R^n$ and let $\ee>0$.  The {\em inner parallel
body} $ K\ominus \ee B^n$ is the Minkowski difference of $K$ and $\ee B^n$ as
defined in (\ref{MD}).  Then
$$K\ominus \ee B^n=\bigcap_{y\in \ee B^n}(K-y),$$
so the inner parallel body is convex.  (It may be empty.)  For further
properties, see \cite[pp.~133--137]{Sch93}. The following proposition is an
immediate consequence of the fact that if $K$ is a convex body in $\R^n$, then
\begin{equation}\label{innerp}
V(K)-V(K\ominus \ee B^n)<S(K)\ee.
\end{equation}
This follows directly from either an inequality of Sangwine-Yager or one of
Brannen; see Theorem~1 or Corollary~2 of \cite{Bra}, respectively.  The
estimate (\ref{innerp}) both generalizes and strengthens
\cite[Lemma~4.2]{GarK}, which concerns the case $n=2$.  The authors of the
latter paper were unaware that an even stronger estimate for $n=2$ was found
earlier by Matheron \cite{Mat}.

\begin{prop}\label{prp2}
If $K\subset C_0^n$ is a convex body and $\ee>0$, then
$$
V(K)-V(K\ominus \ee B^n)<2n\ee.
$$
\end{prop}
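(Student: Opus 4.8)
The plan is to derive the proposition directly from the surface-area estimate (\ref{innerp}) together with a trivial upper bound on $S(K)$.

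First I would apply (\ref{innerp}), which gives
$$V(K)-V(K\ominus \ee B^n)<S(K)\ee,$$
so it remains only to check that $S(K)\le 2n$ whenever $K\subset C_0^n$. For this, I would use that the surface area of a convex body is monotone under inclusion: since $K\subset C_0^n$, we have $S(K)\le S(C_0^n)$. The cube $C_0^n=[-1/2,1/2]^n$ has exactly $2n$ facets, each an $(n-1)$-dimensional unit cube of $(n-1)$-volume $1$, so $S(C_0^n)=2n$. If one prefers to avoid quoting monotonicity of $S$ outright, it follows from Cauchy's surface area formula (\ref{Cauchynew}) and the monotonicity of orthogonal projection, which give $b_K(u)=V(K|u^\perp)\le V(C_0^n|u^\perp)=b_{C_0^n}(u)$ for every $u\in S^{n-1}$ and hence $S(K)\le S(C_0^n)$. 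Combining this with the displayed inequality yields
$$V(K)-V(K\ominus \ee B^n)<S(K)\ee\le 2n\ee,$$
as required.

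There is no genuine obstacle here; the only point to watch is the strictness of the inequality, but this is automatic since (\ref{innerp}) is already strict. The proposition is thus essentially a one-line consequence of (\ref{innerp}) once one records the elementary surface-area bound $S(K)\le 2n$ for convex subsets of the unit cube.
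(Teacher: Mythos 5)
Your proof is correct and follows exactly the route the paper intends: the paper presents the proposition as an immediate consequence of the estimate (\ref{innerp}), with the bound $S(K)\le S(C_0^n)=2n$ left implicit. Your filling in of that step via monotonicity of surface area (or equivalently via Cauchy's surface area formula) is exactly right.
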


Let $\mathcal G$ be the class of all nonnegative functions $g$ on $\R^n$ with
support in $2C_0^n$ that are the covariogram of some convex body contained in
$C_0^n$, together with the function on $\R^n$ that is identically zero. Note
that for each $g\in \mathcal G$ and $x\in \R^n$, $g(x)\le g_{C_0^n}(x)\le
V(C_0^n)=1$.

\begin{lem}\label{net}
Let $0<\ee<1$ be given. Then there is a finite set
$\{(g_j^L,g_j^U):\,j=1,\ldots,m\}$ of pairs of functions in $\mathcal G$ such
that
\begin{enumerate}
\item[(i)]
$\|g_j^U-g_j^L\|_1\le \ee$ for $j=1,\ldots,m$ and
\item[(ii)] for each $g\in \mathcal G$, there is an $j\in
\{1,\ldots,m\}$ such that $g_j^L\le g\le g_j^U$.
\end{enumerate}
\end{lem}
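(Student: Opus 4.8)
The plan is to build the bracketing net by combining a compactness argument with an explicit $\ee/2$-net in Hausdorff distance. First I would establish that $\mathcal G$, viewed as a subset of $L^1(2C_0^n)$, is relatively compact; this follows because every $g\in\mathcal G$ is uniformly bounded by $1$ (as noted just before the statement) and, by Corollary~\ref{lipcor}, is Lipschitz with constant $\sqrt n$, so $\mathcal G$ is equicontinuous and bounded, hence totally bounded in the sup norm and a fortiori in $L^1$. Actually, for the bracketing construction the cleaner route is to pass through the underlying convex bodies: let $\mathcal K^n(C_0^n)$ be the class of convex bodies contained in $C_0^n$, which is compact in the Hausdorff metric by Blaschke's selection theorem. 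The map $K\mapsto g_K$ is continuous from $(\mathcal K^n(C_0^n),\delta)$ into $(L^1(2C_0^n),\|\cdot\|_1)$ — indeed, if $\delta(K,K')<\eta$ then $K\subset K'+\eta B^n$ and vice versa, so $g_K$ and $g_{K'}$ differ pointwise by at most a quantity controlled by $\eta$ times the surface area, uniformly over $K,K'\subset C_0^n$, giving $\|g_K-g_{K'}\|_1\le c(n)\eta$ for some dimensional constant.

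Next, fix $\ee>0$ and choose $\eta>0$ with $c(n)\eta\le\ee/4$, say, and also small enough that the ``fattening'' estimates below work. Take a finite $\eta$-net $\{K_1,\dots,K_m\}$ in $\mathcal K^n(C_0^n)$ with respect to $\delta$, which exists by compactness. For each $j$, define the inner and outer parallel bodies $K_j^L=(K_j\ominus\eta B^n)$ (which may be empty, in which case set $g_j^L\equiv 0$) and $K_j^U=(K_j+\eta B^n)\cap C_0^n$, and then put $g_j^L=g_{K_j^L}$ and $g_j^U=g_{K_j^U}$; we also throw in the zero function, which handles the case of an empty inner parallel body and is allowed since the identically-zero function lies in $\mathcal G$ by definition. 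Both $K_j^L$ and $K_j^U$ are convex bodies contained in $C_0^n$ (using that $K_j^U$ is an intersection of convex sets and contains $K_j$, hence has nonempty interior), so $g_j^L,g_j^U\in\mathcal G$. For property (ii): given $g=g_K\in\mathcal G$, pick $j$ with $\delta(K,K_j)<\eta$; then $K_j\ominus\eta B^n\subset K\subset K_j+\eta B^n$, and intersecting the latter with $C_0^n$ changes nothing since $K\subset C_0^n$, so $K_j^L\subset K\subset K_j^U$, whence the monotonicity of the covariogram under inclusion, $g_{A}\le g_{B}$ when $A\subset B$, gives $g_j^L\le g\le g_j^U$ pointwise. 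For property (i): $\|g_j^U-g_j^L\|_1=\int_{2C_0^n}(g_{K_j^U}-g_{K_j^L})$, and I would bound this by combining $\|g_{K_j^U}-g_{K_j}\|_1$ and $\|g_{K_j}-g_{K_j^L}\|_1$, each of which is controlled by the continuity estimate above (note $\delta(K_j,K_j^U)\le\eta$ and $\delta(K_j,K_j^L)\le\eta$ when the inner body is nonempty, using Proposition~\ref{prp2} to handle the volume loss near the boundary), so the total is at most $c'(n)\eta\le\ee$ after a final adjustment of $\eta$.

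The main obstacle I anticipate is making the continuity estimate $\|g_K-g_{K'}\|_1\le c(n)\delta(K,K')$ genuinely uniform over all convex bodies in $C_0^n$, including degenerate-looking ones and the empty inner parallel body. The uniform bound $b_K(u)\le\sqrt n$ from Corollary~\ref{lipcor} (equivalently, the surface area of any convex body in $C_0^n$ is at most that of the cube, by monotonicity of surface area, hence bounded by $2n$) is exactly what makes this work: from $K\subset K'+\eta B^n$ one gets $g_{K'}(x)-g_K(x)\le g_{K'}(x)-g_{K'\cap(K'+\eta B^n)\text{-type bound}}$, and more directly $|V(K\cap(K+x))-V(K'\cap(K'+x))|$ is controlled by the measure of a $\eta$-neighborhood of $\partial K'$, which is at most $S(K')\eta+O(\eta^2)\le 2n\eta+O(\eta^2)$ by Proposition~\ref{prp2} applied appropriately; integrating over $x\in 2C_0^n$ (a set of volume $2^n$) yields $\|g_K-g_{K'}\|_1\le c(n)\eta$. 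Once this is in place the rest is routine, and one should double-check the edge case where $K_j\ominus\eta B^n=\emptyset$: here one uses $g_j^L\equiv 0\le g$ trivially, and Proposition~\ref{prp2} still gives $\|g_{K_j}-0\|_1=\int g_{K_j}=\int_{2C_0^n}g_{K_j}\le V(K_j)^2\cdot(\text{const})$, but more to the point $V(K_j)<2n\eta$ in that case so $\|g_{K_j}^U-0\|_1$ is small, which is all we need.
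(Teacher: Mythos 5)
Your proposal is correct and follows essentially the same route as the paper: a Hausdorff $\eta$-net on $\mathcal K^n(C_0^n)$ by Blaschke selection, brackets built from the inner and outer parallel bodies $K_j\ominus\eta B^n$ and $(K_j+\eta B^n)\cap C_0^n$, monotonicity of the covariogram for (ii), and Steiner/Proposition~\ref{prp2} volume estimates for (i). The only (harmless) variation is in (i), where the paper uses the exact identity $\int_{DL}g_L=V(L)^2$ to reduce directly to $V(K_j^U)^2-V(K_j^L)^2$, whereas you bound $\|g_{K}-g_{K'}\|_1$ via a pointwise symmetric-difference estimate; both rest on the same volume bounds and give the same conclusion.
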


\begin{proof}
Let $0<\ee<1$ and let \refstepcounter{Cnmb}\label{caa}
$c_{\arabic{Cnmb}}=c_{\arabic{Cnmb}}(n)\ge 1$ be a constant, to be chosen
later. Since ${\mathcal K}^n(C_0^n)$ with the Hausdorff metric is compact,
there is an $\ee/c_{\ref{caa}}$-net $\{K_1,\ldots,K_m\}$ in ${\mathcal
K}^n(C_0^n)$. For each $j=1,\dots,m$, let
$K_j^U=(K_j+({\ee}/c_{\ref{caa}})B^n)\cap C_0^n$ and $K_j^L=K_j\ominus
({\ee}/c_{\ref{caa}})B^n$. Define $g_j^U=g_{K_j^U}$ and $g_j^L=g_{K_j^L}$,
$j=1,\ldots,m$. Both $g_j^U$ and $g_j^L$ belong to ${\mathcal G}$,
$j=1,\dots,m$.

We first prove (ii).  Let $g\in {\mathcal G}$.  There is a $K\in {\mathcal
K}^n(C_0^n)$ such that $g=g_K$.  Choose $j\in\{1,\dots,m\}$ such that
$\delta(K,K_j)\le \ee/c_{\ref{caa}}$. Since $K\subset C_0^n$ and $K\subset
K_j+({\ee}/c_{\ref{caa}})B^n$, we have $K\subset
(K_j+({\ee}/c_{\ref{caa}})B^n)\cap C_0^n=K_j^U$. Also, we have
$$(K_j\ominus ({\ee}/c_{\ref{caa}})B^n)+({\ee}/c_{\ref{caa}})B^n\subset K_j\subset
K+({\ee}/c_{\ref{caa}})B^n,$$ yielding $K_j^L=K_j\ominus
({\ee}/c_{\ref{caa}})B^n\subset K$. These facts imply that $g_j^L\le g \le
g_j^U$, as required.

It remains to prove (i).  It is easy to prove (see, for example,
\cite[p.~411]{Sch93}) that for any convex body $L$ in $\R^n$,
$$\int_{DL}g_L(x)\,dx=V(L)^2.$$
Applying this, Steiner's formula with quermassintegrals (see \cite[(A.30),
p.~404]{Gar95a}, basic properties of mixed volumes (see \cite[(A.16) and
(A.18), p.~399]{Gar95a}) together with $K_j\subset C_0^n \subset(n/4)^{1/2}B^n$
and $c_{\ref{caa}}\ge 1$, and Proposition~\ref{prp2} with $\ee$ replaced by
$\ee/c_{\ref{caa}}$, we obtain
\begin{eqnarray*}
\|g_j^U-g_j^L\|_1&=& \int_{2C_0^n}
\left(g_j^U(x)-g_j^L(x)\right)\,dx=V\left(K_j^U\right)^2-V\left(K_j^L\right)^2\le
2\left(V\left(K_j^U\right)-V\left(K_j^L\right)\right)\\
&\le &
2\left(\left(V\left(K_j+\frac{\ee}{c_{\ref{caa}}}B^n\right)-V(K_j)\right)+
\left(V(K_j)-V\left(K_j\ominus\frac{\ee}{c_{\ref{caa}}}B^n\right)\right)\right)\\
&\le & 2\left(\kappa_n\sum_{i=1}^n\left(\begin{array}{c}n\\i\end{array}\right)
\left(\frac{n}{4}\right)^{(n-i)/2}+
2n\right)\left(\frac{\ee}{c_{\ref{caa}}}\right)<\ee,
\end{eqnarray*}
provided that $c_{\ref{caa}}$ is chosen sufficiently large.
\end{proof}

By analogy with \cite[Definition~2.2]{vdG00}, we refer to a finite set
$\{(g_j^L,g_j^U):\,j=1,\ldots,m\}$ of pairs of functions in $\mathcal G$
satisfying (i) and (ii) of Lemma~\ref{net} as an {\em $\ee$-net with
bracketing} for the class $\mathcal G$.

The following proposition is a version of the strong law of large numbers that applies to a triangular family, rather than a sequence, of
random variables. A version with the assumptions of full independence and uniformly bounded fourth moments is proved in detail in \cite[Lemma~4.4]{GarK}, with $m_k=k$.  The stronger statement below follows directly from \cite[Corollary~1]{HMT89} (with $p=1$ and $n=m_k$ there); in fact, it is enough to assume the uniform boundedness of $p$th absolute moments where $p=2+\ee$ for some $\ee>0$, but we prefer to avoid this extra parameter in the sequel.

\begin{prop}\label{stronglaw}
Let $X_{ik}$, $k\in \N$, $i=1,\ldots,m_k$, where $m_k\ge k$, be a triangular array of row-wise independent zero mean random variables.  If the array has uniformly bounded third absolute moments, then, almost surely,
\begin{equation}\label{asConv}
\frac{1}{m_k} \sum_{i=1}^{m_k} X_{ik} \to 0
\end{equation}
as $k\to \infty$.
\end{prop}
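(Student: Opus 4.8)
The plan is a Borel--Cantelli argument driven by an $L^3$ moment bound on the row sums. Write $S_k=\sum_{i=1}^{m_k}X_{ik}$, so that the assertion is $S_k/m_k\to 0$ almost surely. Fix $\ee>0$. For each fixed $k$ the variables $X_{1k},\dots,X_{m_kk}$ are independent with mean zero, so Rosenthal's inequality with exponent $3$ supplies an absolute constant $c$ with
$$
E\bigl(|S_k|^3\bigr)\le c\left(\sum_{i=1}^{m_k}E\bigl(|X_{ik}|^3\bigr)+\Bigl(\sum_{i=1}^{m_k}E\bigl(X_{ik}^2\bigr)\Bigr)^{3/2}\right);
$$
the Marcinkiewicz--Zygmund inequality combined with the power mean inequality would serve equally well here.

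Now the hypothesis enters. There is a constant $C$ with $E(|X_{ik}|^3)\le C$ for all $i$ and $k$, and hence, as noted just after \eqref{moments}, also $E(X_{ik}^2)\le C^{2/3}$. Therefore $\sum_{i=1}^{m_k}E(|X_{ik}|^3)\le Cm_k$ and $\bigl(\sum_{i=1}^{m_k}E(X_{ik}^2)\bigr)^{3/2}\le Cm_k^{3/2}$, so $E(|S_k|^3)\le c'm_k^{3/2}$ for all $k$, where $c'$ depends only on $C$. By Markov's inequality and the hypothesis $m_k\ge k$,
$$
P\bigl(|S_k|>\ee m_k\bigr)\le\frac{E(|S_k|^3)}{\ee^3 m_k^3}\le\frac{c'}{\ee^3 m_k^{3/2}}\le\frac{c'}{\ee^3 k^{3/2}}.
$$
Since $\sum_k k^{-3/2}<\infty$, the Borel--Cantelli lemma gives $P\bigl(|S_k|>\ee m_k\text{ for infinitely many }k\bigr)=0$.

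Finally, taking $\ee=1/j$ and intersecting over $j\in\N$ — a countable intersection of events of probability one — shows that almost surely $|S_k/m_k|\le 1/j$ for all large $k$ and every $j$, which is exactly \eqref{asConv}. The one place where some care is needed is the use of the third moments: expanding $E(S_k^4)$ and exploiting independence, the classical elementary route, would require fourth moments, so one genuinely needs a moment inequality valid for an exponent strictly above $2$, such as Rosenthal's or Marcinkiewicz--Zygmund's; alternatively, as the statement already indicates, one may simply quote Corollary~1 of \cite{HMT89}. The very same scheme with exponent $2+\ee$ in place of $3$ shows that uniformly bounded $(2+\ee)$th absolute moments already suffice, since the resulting exponent $1+\ee/2$ of $m_k$ in the denominator still yields a convergent series in $k$.
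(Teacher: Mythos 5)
Your argument is correct. The chain Rosenthal (exponent $3$) $\Rightarrow$ $E|S_k|^3\le c'm_k^{3/2}$ $\Rightarrow$ Markov $\Rightarrow$ $\sum_k k^{-3/2}<\infty$ $\Rightarrow$ Borel--Cantelli is valid: only row-wise independence is used (in the moment inequality for each fixed $k$), the convergence half of Borel--Cantelli needs no independence across $k$, and the Lyapunov step $E(X_{ik}^2)\le C^{2/3}$ is exactly the observation recorded after \eqref{moments}. Your closing remarks are also on target: the classical $E(S_k^4)$ expansion would indeed demand fourth moments, and replacing $3$ by $2+\ee$ reproduces the paper's observation that $(2+\ee)$th absolute moments suffice. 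The difference from the paper is one of route rather than substance: the paper does not prove the proposition at all, but simply invokes Corollary~1 of \cite{HMT89} (and points to \cite[Lemma~4.4]{GarK} for the fourth-moment, fully independent special case), whereas you supply a short self-contained proof. What the citation buys the authors is brevity and access to a more general array SLLN; what your argument buys is transparency — the reader sees exactly where the third-moment hypothesis and the condition $m_k\ge k$ enter (namely, in making $E|S_k|^3/(\ee^3m_k^3)$ summable) — at the modest cost of importing Rosenthal's or the Marcinkiewicz--Zygmund inequality as a black box in place of \cite{HMT89}.
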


\begin{lem}\label{bound}
For every $k\in \N$, let  $x_{ik}$, $i=1,\dots,I_k$, be the points in the cubic
array $2C_0^n\cap (1/k)\Z^n$. Let  $N_{ik}$, $k\in \N$, $i=1,\ldots,I_k$, be
row-wise independent zero mean random variables with uniformly bounded third absolute moments. Then, almost surely,
$$\sup_{K\in{\mathcal K}^n(C_0^n)}\Psi(K,\vx_{I_k},\vN_{I_k})\to 0$$
as $k\to\infty$, where for each $k\in\N$, $\Psi(K,\vx_{I_k},\vN_{I_k})$ is
defined by (\ref{psi}) with $q=I_k$, $\vx_{I_k}=(x_{1k},\dots,x_{I_kk})$, and
$\vN_{I_k}=(N_{1k},\ldots,N_{I_kk})$.
\end{lem}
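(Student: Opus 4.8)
The plan is to reduce the uniform statement to the non-uniform strong law of large numbers, Proposition~\ref{stronglaw}, using the $\ee$-net with bracketing for $\mathcal G$ furnished by Lemma~\ref{net}. Since $\Psi$ is linear in its third argument, $\Psi(K,\vx_{I_k},-\vN_{I_k})=-\Psi(K,\vx_{I_k},\vN_{I_k})$, so it suffices to prove that for any array $(N_{ik})$ as in the statement one has, almost surely, $\limsup_{k\to\infty}\sup_{K\in{\mathcal K}^n(C_0^n)}\Psi(K,\vx_{I_k},\vN_{I_k})\le 0$; applying this to both $(N_{ik})$ and $(-N_{ik})$ then gives $\sup_K|\Psi(K,\vx_{I_k},\vN_{I_k})|\to 0$, which is stronger than what is asserted. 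Moreover, after fixing $\ee>0$, I only need to show that this $\limsup$ is at most $c\ee$ almost surely, where $c$ depends solely on the uniform bound on the third absolute moments of the $N_{ik}$: one then intersects the corresponding almost sure events over $\ee=1/\ell$, $\ell\in\N$.

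So fix $\ee>0$ and let $\{(g_j^L,g_j^U):j=1,\dots,m\}$ be an $\ee$-net with bracketing for $\mathcal G$, which we may assume satisfies $g_j^L\le g_j^U$ for each $j$. Given $K\in{\mathcal K}^n(C_0^n)$ we have $g_K\in\mathcal G$, hence $g_j^L\le g_K\le g_j^U$ for some $j$. The key device is to split $N_{ik}=N_{ik}^+-N_{ik}^-$ into positive and negative parts: since $N_{ik}^\pm\ge0$, the two-sided bracketing turns into the one-sided bound
$$\Psi(K,\vx_{I_k},\vN_{I_k})=\frac1{I_k}\sum_{i=1}^{I_k}\bigl(g_K(x_{ik})N_{ik}^+-g_K(x_{ik})N_{ik}^-\bigr)\le S_{jk}:=\frac1{I_k}\sum_{i=1}^{I_k}\bigl(g_j^U(x_{ik})N_{ik}^+-g_j^L(x_{ik})N_{ik}^-\bigr).$$
As every $K$ gives rise to some such $j$, we get $\sup_K\Psi(K,\vx_{I_k},\vN_{I_k})\le\max_{1\le j\le m}S_{jk}$, and the maximum being over finitely many indices, $\limsup_k\sup_K\Psi(K,\vx_{I_k},\vN_{I_k})\le\max_j\limsup_k S_{jk}$. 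It thus remains to bound $\limsup_k S_{jk}$ for each fixed $j$, and this is exactly where the non-uniformity in $K$ has been disposed of: the supremum over the infinite family ${\mathcal K}^n(C_0^n)$ has become a maximum over the finitely many brackets.

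To estimate $\limsup_k S_{jk}$, set $\mu_{ik}=E(N_{ik}^+)=E(N_{ik}^-)$, the equality holding because $E(N_{ik})=0$, and write
$$S_{jk}=\frac1{I_k}\sum_{i=1}^{I_k}g_j^U(x_{ik})\bigl(N_{ik}^+-\mu_{ik}\bigr)-\frac1{I_k}\sum_{i=1}^{I_k}g_j^L(x_{ik})\bigl(N_{ik}^--\mu_{ik}\bigr)+\frac1{I_k}\sum_{i=1}^{I_k}\mu_{ik}\bigl(g_j^U(x_{ik})-g_j^L(x_{ik})\bigr).$$
The first two sums tend to $0$ almost surely by Proposition~\ref{stronglaw}: for fixed $k$ the summands are row-wise independent with zero mean; they have uniformly bounded third absolute moments, since $0\le g_j^L,g_j^U\le1$ on $\R^n$ while $|N_{ik}^\pm-\mu_{ik}|\le|N_{ik}|+\mu_{ik}$ with $\mu_{ik}\le(E|N_{ik}|^3)^{1/3}$ uniformly bounded; and $I_k=(2k+1)^n\ge k$. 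For the third sum, $\mu_{ik}\le c$ for a constant $c$ depending only on the moment bound, and $g_j^U-g_j^L\ge0$, so it is at most $\frac{c}{I_k}\sum_i(g_j^U-g_j^L)(x_{ik})$; since $g_j^U-g_j^L$ is nonnegative, continuous (indeed Lipschitz, by Corollary~\ref{lipcor}) and supported in $2C_0^n$, and the $x_{ik}$ are precisely the points of $(1/k)\Z^n$ in $2C_0^n$, this Riemann sum converges as $k\to\infty$ to $c\,2^{-n}\int_{2C_0^n}(g_j^U-g_j^L)=c\,2^{-n}\|g_j^U-g_j^L\|_1\le c\ee$, using Lemma~\ref{net}(i). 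Hence $\limsup_k S_{jk}\le c\ee$ almost surely for each $j$, which is what was needed. The only genuine obstacle throughout is the non-uniformity in $K$; the positive/negative part splitting, which is what lets the two-sided pointwise bracketing be applied to the sign-changing noise, is the one nonroutine idea, after which everything rests on Proposition~\ref{stronglaw} and elementary Riemann-sum convergence.
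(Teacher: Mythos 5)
Your proof is correct and follows essentially the same route as the paper: the same $\ee$-net with bracketing from Lemma~\ref{net}, the same splitting of $N_{ik}$ into positive and negative parts to convert two-sided bracketing into a one-sided bound, the same reduction to finitely many brackets, and the same appeal to Proposition~\ref{stronglaw} plus a Riemann-sum estimate of $\|g_j^U-g_j^L\|_1$. The only cosmetic difference is that you center the sums explicitly and isolate the deterministic remainder, where the paper phrases the same computation via limits superior and inferior.
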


\begin{proof}
Let $0<\ee<1$ and let $\{(g_j^L,g_j^U) :j=1,\dots,m\}$ be an $\ee$-net with
bracketing for $\mathcal G$, as provided by Lemma~\ref{net}. Let $K\in
{\mathcal K}^n(C_0^n)$ and let $g=g_{K}\in \mathcal G$. Choose
$j\in\{1,\dots,m\}$ such that $g_j^L\le g \le g_j^U$. Define
$N_{ik}^+=\max\{N_{ik},0\}$ and $N_{ik}^-=N_{ik}^+-N_{ik}$ for $k\in \N$ and
$i=1,\dots,I_k$. Then for $k\in \N$, we have
\begin{eqnarray*}
\Psi(K,\vx_{I_k},\vN_{I_k})&=&\frac {1}{I_k}\sum_{i=1}^{I_k} g(x_{ik}) N_{ik}^+
-\frac{1}{I_k}\sum_{i=1}^{I_k} g(x_{ik}) N_{ik}^-\\
&\le&\frac{1}{I_k}\sum_{i=1}^{I_k} g_j^U(x_{ik}) N_{ik}^+
-\frac{1}{I_k}\sum_{i=1}^{I_k} g_j^L(x_{ik}) N_{ik}^-\\
&\le &W_k(\ee),
\end{eqnarray*}
where
\begin{equation}\label{WWW}
W_k(\ee)=\max_{j=1,\ldots,m} \left\{\frac  {1}{I_k}\sum_{i=1}^{I_k}
g_j^U(x_{ik}) N_{ik}^+-\frac  {1}{I_k}\sum_{i=1}^{I_k} g_j^L(x_{ik})
N_{ik}^-\right\}
\end{equation}
is independent of $K$.  Consequently,
\begin{equation}\label{max}
\sup_{K\in{\mathcal K}^n(C_0^n)}\Psi(K,\vx_{I_k},\vN_{I_k})\le  W_k(\ee),
\end{equation}
for all $0<\ee<1$.

Fix $j\in \{1,\dots,m\}$, and let
$$
X_{ik}=g_j^U(x_{ik})N^+_{ik}-g_j^U(x_{ik})E(N^+_{ik}),
$$
for $k\in \N$ and $i=1,\dots,I_k$. Since $g_j^U(x_{ik})\le 1$, it is easy to
check that the random variables $X_{ik}$ satisfy the hypotheses of
Proposition~\ref{stronglaw}. By (\ref{asConv}) with $m_k=I_k$, we obtain, almost surely,
$$
\limsup_{k\to \infty}\frac{1}{I_k} \sum_{i=1}^{I_k} g_j^U(x_{ik})N^+_{ik}=
\limsup_{k\to \infty}\frac{1}{I_k} \sum_{i=1}^{I_k}g_j^U(x_{ik})E(N^+_{ik}).
$$
The same argument, with limits superior replaced by limits inferior, applies when $X_{ik}$ is defined by $X_{ik}=g_j^L(x_{ik})N^-_{ik}-g_j^L(x_{ik})E(N^-_{ik})$. Our moment assumption on the random variables $N_{ik}$ implies that there is a constant $C$ such that
$$E(N^+_{ik})=E(N^-_{ik})=\frac{1}{2}E(|N_{ik}|)\le C.$$
Also, by Lemma~\ref{net}(i) we have $\|g_j^U-g_j^L\|_1\le \ee$ and by
Lemma~\ref{net}(ii) we may assume that $g^U_j-g^L_j\ge 0$, for $i=1,\dots,m$.
Therefore, almost surely,
\begin{eqnarray*}
\lim_{k\to \infty}W_k(\ee)&=&\max_{j=1,\ldots,m}
\left\{
\limsup_{k\to \infty}\frac{1}{I_k} \sum_{i=1}^{I_k}g_j^U(x_{ik})E(N^+_{ik})-
\liminf_{k\to \infty}\frac{1}{I_k} \sum_{i=1}^{I_k}g_j^L(x_{ik})E(N^-_{ik})\right\}\\
&\le &\max_{j=1,\ldots,m}
\left\{
C\left(\limsup_{k\to \infty}\frac{1}{I_k} \sum_{i=1}^{I_k}g_j^U(x_{ik})-
\liminf_{k\to \infty}\frac{1}{I_k} \sum_{i=1}^{I_k}g_j^L(x_{ik})\right)\right\}\\
&\le &\max_{j=1,\ldots,m}
\left\{\frac{C}{2^n}\int_{2C_0^n}\left(g_j^U(x)-g_j^L(x)\right)\,dx\right\}
\le \frac{C\ee}{2^{n}}.
\end{eqnarray*}
This and (\ref{max}) complete the proof.
\end{proof}

\begin{lem}\label{pseudcov}
Let $K_0\subset C_0^n$ be a convex body with its centroid at the origin.
Suppose that $P_k$ is an output from Algorithm~NoisyCovLSQ as stated above.
Then, almost surely,
\begin{equation}\label{consis}
\lim_{k\to\infty}\left|g_{K_0}-g_{P_k}\right|_{I_k}=0.
\end{equation}
\end{lem}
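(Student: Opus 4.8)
The goal is to show that the least-squares fit achieves, asymptotically, the best possible pseudonorm distance, and that this best possible distance is zero. The starting point is the basic inequality from Lemma~\ref{lem1}, which for any admissible $P(a)$ gives
$$
\left|g_{K_0}-g_{P_k}\right|_{I_k}^2\le 2\Psi(P_k,\vx_{I_k},\vN_{I_k})-2\Psi(P(a)\cap C_0^n,\vx_{I_k},\vN_{I_k})+\left|g_{K_0}-g_{P(a)\cap C_0^n}\right|_{I_k}^2.
$$
The two $\Psi$-terms are controlled uniformly over all convex bodies in $C_0^n$ by Lemma~\ref{bound}: almost surely, $\sup_{K\in{\mathcal K}^n(C_0^n)}|\Psi(K,\vx_{I_k},\vN_{I_k})|\to 0$. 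Since $P_k$ and $P(a)\cap C_0^n$ are both convex bodies contained in $C_0^n$, both $\Psi$-terms tend to zero almost surely (uniformly), so those contributions are harmless in the limit.

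\textbf{Key steps.} First I would fix an arbitrary admissible $P(a)$ and pass to the limit in the Lemma~\ref{lem1} inequality, concluding that almost surely
$$
\limsup_{k\to\infty}\left|g_{K_0}-g_{P_k}\right|_{I_k}^2\le \limsup_{k\to\infty}\left|g_{K_0}-g_{P(a)\cap C_0^n}\right|_{I_k}^2.
$$
Second, I need an admissible choice of $P(a)$ for which the right-hand side is arbitrarily small. The natural candidate is a convex polytope $P(a)$ close to $K_0$ itself (or to $-K_0$), but here one must be careful: admissibility requires the facet normals of $P(a)$ to lie among $\{\pm u_j\}$, the facet normals of $Q_k$, which vary with $k$. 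This is precisely where Lemma~\ref{Prohorov} enters: since $Q_k$ approximates $\nabla K_0$ or $DK_0$ in Hausdorff distance, the Prohorov distance between the corresponding surface area measures is small for large $k$, and Lemma~\ref{Prohorov} produces a polytope $J_k$ whose facets are parallel to facets of $Q_k$ and with $d_P(S(K_0,\cdot),S(J_k,\cdot))$ small. By the continuity of the solution of the Minkowski problem (or directly, since small Prohorov distance of surface area measures of bodies with a uniform inradius/outradius bound forces small Hausdorff distance — using Lemma~\ref{inrad} to get the uniform bounds), $J_k$ is close to a translate of $K_0$. Translating $J_k$ into $C_0^n$ and intersecting, $P(a_k)\cap C_0^n$ is admissible for step~$k$ and converges in Hausdorff distance to $K_0$ (up to translation). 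Third, since $g$ depends continuously on the body in Hausdorff distance (uniformly, because all covariograms involved are uniformly bounded and uniformly Lipschitz by Corollary~\ref{lipcor}), $\|g_{K_0}-g_{P(a_k)\cap C_0^n}\|_\infty\to 0$, hence $\left|g_{K_0}-g_{P(a_k)\cap C_0^n}\right|_{I_k}\to 0$ since the pseudonorm is dominated by the sup norm. Feeding this back into the basic inequality (now with $a=a_k$ varying with $k$, which is legitimate since Lemma~\ref{lem1} holds for any admissible $P(a)$ at each fixed $k$) gives $\limsup_k\left|g_{K_0}-g_{P_k}\right|_{I_k}^2=0$.

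\textbf{Main obstacle.} The delicate point is handling the dependence of the comparison polytope on $k$. Lemma~\ref{lem1} is stated for a fixed admissible $P(a)$, but the admissible class changes with $k$ because it is pinned to the facet normals of $Q_k$; one must verify that one may take $P(a)=P(a_k)$ with $a_k$ depending on $k$ and still apply the inequality at each $k$ — this is fine, but it forces the argument to be run ``for each $k$'' rather than ``fix $a$, let $k\to\infty$.'' Concretely, I would argue: almost surely, $\delta(Q_k,\nabla K_0)\to 0$ (or $\delta(Q_k,DK_0)\to 0$); on that event, for large $k$ Lemma~\ref{Prohorov} applies and yields $J_k$ with $d_P(S(K_0,\cdot),S(J_k,\cdot))<c\,\ee_k$ where $\ee_k\to 0$; a standard stability estimate then gives a translate $t_k$ with $\delta(K_0, J_k+t_k)\to 0$; setting $P(a_k)$ to be the (centroid-normalized) polytope corresponding to $J_k+t_k$ with $C_0^n$-truncation, admissibility holds since the facets of $J_k$ are parallel to those of $Q_k$, and the Lemma~\ref{lem1} inequality at stage $k$ combined with Lemma~\ref{bound} and $\left|g_{K_0}-g_{P(a_k)\cap C_0^n}\right|_{I_k}\to 0$ closes the argument. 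The truncation by $C_0^n$ is harmless because $K_0\subset C_0^n$ already, so $J_k+t_k$ close to $K_0$ implies $(J_k+t_k)\cap C_0^n$ is also close to $K_0$.
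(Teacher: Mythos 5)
Your proposal is correct and follows essentially the same route as the paper's proof: apply the basic inequality of Lemma~\ref{lem1} with a $k$-dependent comparison polytope $J_k$ produced by Lemma~\ref{Prohorov} from the convergence $Q_k\to\nabla K_0$ (or $DK_0$), use Lemma~\ref{inrad} plus a compactness/stability argument to get $\delta(K_0,J_k\cap C_0^n)\to 0$ and hence $|g_{K_0}-g_{J_k\cap C_0^n}|_{I_k}\to 0$, and kill the two $\Psi$-terms via Lemma~\ref{bound}. The "delicate point" you flag about the admissible class varying with $k$ is handled in the paper exactly as you propose, by applying Lemma~\ref{lem1} at each fixed $k$ with $P(a)=J_k$.
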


\begin{proof}
Let $Q_k$ be the $o$-symmetric polytope from the input of Algorithm~NoisyCovLSQ
that satisfies, almost surely, \eqref{QkdoesIt}. Fix a realization for which
\eqref{QkdoesIt} holds. We may assume that
$$
\lim_{k\to\infty} \delta(Q_k,\nabla K_0)=0,
$$
as the other case is completely analogous. By \cite[Theorem~4.2.1]{Sch93},
$S(Q_k,\cdot)$ converges weakly to $S(\nabla K_0,\cdot)$ as
$k\rightarrow\infty$.  By \cite[Theorem~6.8]{Bil}, weak convergence is
equivalent to convergence in the Prohorov metric, so $S(Q_k,\cdot)$ converges
in the Prohorov metric to $S(\nabla K_0,\cdot)$ as $k\rightarrow\infty$.  Now
Lemma~\ref{Prohorov} ensures that if $J_k$ is the convex polytope corresponding
to $Q_k$ in that lemma, then $S(J_k,\cdot)$ converges in the Prohorov metric to
$S(K_0,\cdot)$ as $k\rightarrow\infty$. We may assume that the centroid of
$J_k$ is at the origin for each $k$.  By Lemma \ref{inrad} (with $K$ and $L$
replaced by $K_0$ and $J_k$, respectively), there are constants $a_1$ and
$k_0\in \N$, depending only on $K_0$, such that $J_k\subset a_1B^n$ for all
$k\ge k_0$. By Blaschke's selection theorem and the fact that a convex body is
determined up to translation by its surface area measure, the sequence $(J_k)$
has an accumulation point and every such accumulation point must be a translate
of $K_0$.  But $J_k$ and $K_0$ have their centroids at the origin and
$K_0\subset C_0^n$, so
$$\lim_{k\to\infty} \delta(K_0,J_k\cap C_0^n)=\lim_{k\to\infty} \delta(K_0,J_k)=0.$$
(This consequence of the fact that $d_P(S(J_k,\cdot),S(K_0,\cdot))\rightarrow
0$ as $k\rightarrow\infty$ can also be derived from a stability estimate of Hug
and Schneider \cite[Theorem~3.1]{HS}, but we do not need the full force of that
result here.) It follows from the continuity of volume that
$\|g_{K_0}-g_{J_k\cap C_0^n}\|_\infty\rightarrow 0$ as $k\rightarrow\infty$ and
hence that
\begin{equation}\label{may}
\lim_{k\to\infty} \left|g_{K_0}-g_{J_k\cap C_0^n}\right|_{I_k}=0.
\end{equation}

Next, we observe that $J_k$ can serve as the $P(a)$ in Lemma~\ref{lem1}. By its
definition, a translate of $P_k$ is contained in $C_0^n$, and the quantity
$\Psi(P_k,\vx_{I_k},\vN_{I_k})$ is unaffected by this translation.  From
Lemma~\ref{bound} we obtain
\begin{equation}\label{may2}
\lim_{k\to\infty}\Psi(P_k,\vx_{I_k},\vN_{I_k})=0\quad{\text{and}}\quad
\lim_{k\to\infty}\Psi(J_k\cap C_0^n,\vx_{I_k},\vN_{I_k})=0.
\end{equation}
Now (\ref{consis}) follows directly from (\ref{normbound}) (with $P(a)$
replaced by $J_k$), (\ref{may}), and (\ref{may2}).
\end{proof}

\begin{thm}\label{maincov}
Suppose that $K_0\subset C_0^n$ is a convex body with its centroid at the
origin.  Suppose also that $K_0$ is determined, up to translation and
reflection in the origin, among all convex bodies in $\R^n$, by its
covariogram. If $P_k$, $k\in\N$, is an output from Algorithm~NoisyCovLSQ as
stated above, then, almost surely,
\begin{equation}\label{mainconsis}
\min\{\delta(K_0,P_k),\delta(-K_0,P_k)\}\to 0
\end{equation}
as $k\to\infty$.
\end{thm}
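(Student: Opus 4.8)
The plan is to combine the discrete consistency estimate of Lemma~\ref{pseudcov} with a compactness argument and the uniqueness hypothesis on $K_0$. Fix once and for all a realization in the almost sure event on which Lemma~\ref{pseudcov} holds, so that $|g_{K_0}-g_{P_k}|_{I_k}\to 0$ as $k\to\infty$; everything that follows is then deterministic. First I would note that the polytopes $P_k$ all lie in a fixed bounded region. Indeed, $P_k$ is the translate with centroid at the origin of $P(\hat a)\cap C_0^n$, and since $o\in P(\hat a)$ (the centroid of a convex body lies in the body) we have $o\in P(\hat a)\cap C_0^n\subset C_0^n$; the centroid $c$ of $P(\hat a)\cap C_0^n$ therefore lies in the convex set $C_0^n$, so $P_k=(P(\hat a)\cap C_0^n)-c\subset C_0^n-c\subset 2C_0^n$. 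Consequently, by Blaschke's selection theorem, every subsequence of $(P_k)$ has a sub-subsequence converging in the Hausdorff metric to some compact convex set $P\subset 2C_0^n$.

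The heart of the matter is to show that any such accumulation point $P$ satisfies $g_P=g_{K_0}$. Suppose $P_{k_j}\to P$. Since $g_{P_k}=g_{P(\hat a)\cap C_0^n}$ and $P(\hat a)\cap C_0^n\subset C_0^n$, all the covariograms $g_{P_k}$, as well as $g_{K_0}$ and $g_P$, are supported in $2C_0^n$; and by continuity of volume (equivalently, the uniform Lipschitz bound of Proposition~\ref{Matprop}) we have $g_{P_{k_j}}\to g_P$ uniformly on $\R^n$. Because the pseudonorm $|\cdot|_{I_{k_j}}$ is dominated by the supremum norm, this gives $|g_P-g_{P_{k_j}}|_{I_{k_j}}\to 0$, and the triangle inequality for $|\cdot|_{I_{k_j}}$ together with Lemma~\ref{pseudcov} then yields $|g_{K_0}-g_P|_{I_{k_j}}\to 0$. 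Now $h=g_{K_0}-g_P$ is a \emph{fixed} continuous function supported in $2C_0^n$ (continuity of $g_{K_0}$ comes from Corollary~\ref{lipcor}, that of $g_P$ from Proposition~\ref{Matprop}), so $|h|_{I_{k_j}}^2=\frac{1}{I_{k_j}}\sum_{i}h(x_{i,k_j})^2$ is, after multiplying and dividing by $(1/k_j)^n$ and using $I_{k_j}(1/k_j)^n=(2+1/k_j)^n\to 2^n$, a Riemann sum for $2^{-n}\int_{2C_0^n}h^2$ over the grid $2C_0^n\cap(1/k_j)\Z^n$ of mesh $1/k_j\to 0$. Hence $\int_{2C_0^n}h^2=0$, so $h\equiv 0$ on $2C_0^n$ and therefore everywhere; that is, $g_P=g_{K_0}$.

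Since $g_{K_0}(o)=V(K_0)>0$, the identity $g_P=g_{K_0}$ forces $P$ to be a convex body. The uniqueness hypothesis on $K_0$ then tells us that $P$ is a translate of $K_0$ or of $-K_0$. Centroids are continuous under Hausdorff convergence of convex bodies of positive volume, so $P$ inherits from the $P_{k_j}$ the property of having its centroid at the origin, and as $K_0$ and $-K_0$ also have centroid at the origin we conclude $P=K_0$ or $P=-K_0$. Finally I would prove \eqref{mainconsis} by contradiction: if it failed, there would be an $\eta>0$ and a subsequence with $\min\{\delta(K_0,P_{k_j}),\delta(-K_0,P_{k_j})\}\ge\eta$ for all $j$; extracting a Hausdorff-convergent sub-subsequence $P_{k_{j_l}}\to P$ and using continuity of $\delta$ would give $\delta(K_0,P)\ge\eta$ and $\delta(-K_0,P)\ge\eta$, contradicting $P\in\{K_0,-K_0\}$.

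I expect the main obstacle to be the second paragraph: passing from the \emph{discrete} estimate of Lemma~\ref{pseudcov}, in which the sampled function $g_{P_k}$ changes with $k$, to a genuine identification of covariograms $g_P=g_{K_0}$. The device that makes this work is to route the argument through a fixed accumulation point $P$, so that uniform convergence $g_{P_{k_j}}\to g_P$ removes the $k$-dependence inside the pseudonorm, after which the vanishing mesh of the sampling grid converts the discrete estimate into an $L^2$ estimate for the single fixed continuous function $g_{K_0}-g_P$. (Alternatively one could first upgrade Lemma~\ref{pseudcov} to $\|g_{K_0}-g_{P_k}\|_\infty\to 0$ by exploiting the uniform Lipschitz bound of Corollary~\ref{lipcor} on the grid, but passing via accumulation points is shorter and avoids boundary technicalities.)
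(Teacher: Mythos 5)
Your proposal is correct and follows essentially the same route as the paper's proof: fix a realization where Lemma~\ref{pseudcov} holds, use $P_k\subset 2C_0^n$ and Blaschke selection to extract accumulation points, convert the discrete pseudonorm estimate into $\|g_{K_0}-g_L\|_{L^2(2C_0^n)}=0$ via uniform convergence of covariograms and the vanishing mesh, and then invoke the uniqueness hypothesis together with the centroid normalization. The only (immaterial) difference is bookkeeping: the paper passes directly to the limit of $|g_{K_0}-g_{P_{k'}}|^2_{I_{k'}}$ as a Riemann sum, whereas you first isolate the fixed function $h=g_{K_0}-g_P$ by a triangle inequality.
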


\begin{proof}
By Lemma~\ref{pseudcov}, almost surely,
\begin{equation}\label{consis1}
\left|g_{K_0}-g_{P_k}\right|_{I_k}\rightarrow 0,
\end{equation}
as $k\to\infty$. Fix a realization for which this statement holds. For each
$k$, $P_k$ has its centroid at the origin and is a translate of a subset of
$C_0^n$, so $P_k\subset 2C_0^n$ and by Blaschke's selection theorem, $(P_k)$
has an accumulation point, $L$, say. Note that $L$ must also have its centroid at the origin and be a translate of a subset of $C_0^n$.

Let $(P_{k'})$ be a subsequence converging to $L$. Then since $g_{K_0}-
g_{P_{k'}}$ converges uniformly to $g_{K_0}-g_L$ as $k'\rightarrow\infty$, we
have
$$\left|g_{K_0}- g_{P_{k'}}\right|_{I_{k'}}^2\to \frac{1}{2^n} \int_{2C_0^n}
(g_{K_0}(x)-g_L(x))^2\,dx,$$ as $k'\to \infty$.  From this and (\ref{consis1}),
we obtain $\left\|g_{K_0}- g_L\right\|_{L^2(2C_0^n)}= 0$, and hence, since covariograms are clearly continuous, $g_{K_0}=g_L$ on $2C_0^n$.  As the supports of $g_{K_0}$ and $g_L$ are contained in $2C_0^n$, we have $g_{K_0}=g_L$ in $\R^n$. The hypothesis on $K_0$ now implies that $L=\pm K_0$. Since $L$ was an arbitrary accumulation point of $(P_k)$, we obtain (\ref{mainconsis}).
\end{proof}

\section{Approximating the Blaschke body via the covariogram}\label{Algorithm}

\bigskip

{\large{\bf Algorithm~NoisyCovBlaschke}}

\bigskip

{\it Input:} Natural numbers $n\ge 2$ and $k$; mutually nonparallel vectors
$u_i\in S^{n-1}$, $i=1,\dots,k$ that span $\R^n$; noisy covariogram
measurements
$$
M_{ijk}^{(1)}=g_{K_0}(o)+N_{ijk}^{(1)}\quad\text{and}\quad M_{ijk}^{(2)}
=g_{K_0}((1/k)u_i)+N_{ijk}^{(2)},
$$
for $i=1,\dots,k$ and $j=1,\dots,k^2$, of an unknown convex body $K_0\subset
C_0^n$ whose centroid is at the origin, where the $N_{ijk}^{(m)}$'s are
row-wise independent (i.e., independent for fixed $k$) zero mean random variables with uniformly bounded sixth moments.

\smallskip

{\it Task:} Construct an $o$-symmetric convex polytope $Q_k$ that approximates
the Blaschke body $\nabla K_0$.

\smallskip

{\it Action:}

1.  For $i=1,\dots,k$ and $j=1,\dots,k^2$, let
$$y_{ik}=\frac{1}{k^2}\sum_{j=1}^{k^2}k(M_{ijk}^{(1)}-M_{ijk}^{(2)}).$$

2. With the natural numbers $n\ge 2$ and $k$, and vectors $u_i\in S^{n-1}$,
$i=1,\dots,k$ use the sample means $y_{ik}$ instead of noisy measurements of
the brightness function $b_K(u_i)$ as input to Algorithm~NoisyBrightLSQ (see
\cite[p.~1352]{GKM06}).  The output of the latter algorithm is $Q_k$.

\bigskip

For a fixed finite set $u_1,\dots,u_q$ of points in $S^{n-1}$, define a
pseudonorm $|\cdot|_q$ by
\begin{equation}\label{pseudo2}
|f|_q=\left(\frac{1}{q}\sum_{i=1}^qf(u_i)^2\right)^{1/2},
\end{equation}
where $f$ is any real-valued function on $S^{n-1}$. For a convex body $K$
contained in $C_0^n$, a sequence $(u_i)$ in $S^{n-1}$, and a vector
$\vX_k=(X_{1k},\ldots,X_{kk})$ of random variables, let
$$\Psi(K,(u_i),\mathbf{X}_k)=\frac{1}k\sum_{i=1}^kb_K(u_i)X_{ik}.$$
The same notations were used for a technically different pseudonorm and
function $\Psi$ in the previous section, but this should cause no confusion.

\begin{lem}\label{LEMbasicinequality}
Let $K_0$ be a convex body in $\R^n$ with centroid at the origin and such that
$rB^n\subset K_0\subset C_0^n$ for some $r>0$. Let $(u_i)$ be a sequence in
$S^{n-1}$. If $Q_k$ is an output from Algorithm~NoisyCovBlaschke as stated
above, then, almost surely, there is a constant
\refstepcounter{Cnmb}\label{cbi} $c_{\arabic{Cnmb}}=c_{\arabic{Cnmb}}(n,r)$
such that
\begin{equation}\label{bi}
|b_{K_0}-b_{Q_k}|_k^2\le 2\Psi(Q_k,(u_i),\mathbf{X}_k)-2
\Psi(K_0,(u_i),\mathbf{X}_k)+ \frac{c_{\arabic{Cnmb}}}{k}| b_{K_0}-b_{Q_k}|_k,
\end{equation}
for all $k\in\N$.  Here $\vX_k=(X_{1k},\ldots,X_{kk})$, with
$$X_{ik}=\frac{1}{k}\sum_{j=1}^{k^2}(N_{ijk}^{(1)}-N_{ijk}^{(2)}),$$
for $i=1,\dots,k$.

\end{lem}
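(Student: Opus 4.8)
The plan is to trace the two averaging steps of Algorithm~NoisyCovBlaschke back to the brightness function of $K_0$ and then feed the result into the ``basic inequality'' that underlies the strong consistency of Algorithm~NoisyBrightLSQ in \cite{GKM06}, absorbing into a single $O(1/k)$ term the deterministic error committed when a chord-length difference quotient is replaced by a brightness value. \emph{Step 1 (reduction to brightness data).} Substituting the measurement model of Algorithm~NoisyCovBlaschke into the definition of $y_{ik}$, and using that $g_{K_0}(o)-g_{K_0}((1/k)u_i)$ does not depend on $j$, one obtains $y_{ik}=k(g_{K_0}(o)-g_{K_0}((1/k)u_i))+X_{ik}$. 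By Lemma~\ref{unif1} with $t=1/k$ (applicable once $1/k\le 2r$), together with $b_{K_0}(u_i)\le b_{C_0^n}(u_i)\le\sqrt{n}$ (as in the proof of Corollary~\ref{lipcor}) and the elementary bound $1-(1-x)^{n-1}\le(n-1)x$ for $0\le x\le 1$, one may write
\[
y_{ik}=b_{K_0}(u_i)-\ee_{ik}+X_{ik},\qquad 0\le\ee_{ik}\le\frac{c'}{k},
\]
for some $c'=c'(n,r)$; the finitely many $k$ with $1/k>2r$ are harmless, since there $\ee_{ik}\le b_{K_0}(u_i)\le\sqrt{n}\le\sqrt{n}\,\lceil 1/(2r)\rceil/k$, so $c'$ can be enlarged to cover them. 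Thus Algorithm~NoisyBrightLSQ is being run on genuine noisy measurements of $b_{K_0}(u_i)$, perturbed by an extra deterministic bias $-\ee_{ik}$ of order $1/k$.

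\emph{Step 2 (least squares comparison).} Write $\Phi_k(L)=\sum_{j=1}^k(y_{jk}-b_L(u_j))^2$. Algorithm~NoisyBrightLSQ returns an $o$-symmetric convex polytope $Q_k$ minimizing $\Phi_k$ over a class of $o$-symmetric convex bodies which, as in \cite{GKM06}, contains $\nabla K_0$; here the ``almost surely'' of the statement is inherited from that reference. Since $b_{\nabla K_0}=b_{K_0}$ by \eqref{newBB}, we have $\Phi_k(Q_k)\le\Phi_k(\nabla K_0)=\sum_{j=1}^k(X_{jk}-\ee_{jk})^2$. Writing $y_{jk}-b_{Q_k}(u_j)=(b_{K_0}(u_j)-b_{Q_k}(u_j))+(X_{jk}-\ee_{jk})$, expanding the square in $\Phi_k(Q_k)$, cancelling $\sum_j(X_{jk}-\ee_{jk})^2$, and dividing by $k$ gives
\[
|b_{K_0}-b_{Q_k}|_k^2\le\frac{2}{k}\sum_{j=1}^k\bigl(b_{Q_k}(u_j)-b_{K_0}(u_j)\bigr)(X_{jk}-\ee_{jk}),
\]
whose $X_{jk}$ part is exactly $2\Psi(Q_k,(u_i),\mathbf{X}_k)-2\Psi(K_0,(u_i),\mathbf{X}_k)$.

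\emph{Step 3 (the bias term and conclusion).} It remains to bound from above the term $-\frac{2}{k}\sum_{j=1}^k(b_{Q_k}(u_j)-b_{K_0}(u_j))\ee_{jk}$. By Step~1 and the Cauchy--Schwarz inequality,
\[
\frac{2}{k}\sum_{j=1}^k\bigl|b_{Q_k}(u_j)-b_{K_0}(u_j)\bigr|\,\ee_{jk}
\le\frac{2c'}{k^2}\sum_{j=1}^k\bigl|b_{Q_k}(u_j)-b_{K_0}(u_j)\bigr|
\le\frac{2c'}{k}\,|b_{K_0}-b_{Q_k}|_k,
\]
the last inequality because $\sum_{j=1}^k|b_{Q_k}(u_j)-b_{K_0}(u_j)|\le\sqrt{k}\,\bigl(\sum_{j=1}^k(b_{Q_k}(u_j)-b_{K_0}(u_j))^2\bigr)^{1/2}=k\,|b_{K_0}-b_{Q_k}|_k$. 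Combining with Step~2 gives \eqref{bi} with $c_{\ref{cbi}}=2c'$, which depends only on $n$ and $r$.

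\emph{Main obstacle.} The delicate point is Step~2: one must quote from \cite{GKM06} the precise description of the class over which Algorithm~NoisyBrightLSQ optimizes and check that $\nabla K_0$ --- or, equivalently by \eqref{newBB}, any $o$-symmetric convex body with brightness function $b_{K_0}$ --- is an admissible competitor, so that the pure least squares inequality $\Phi_k(Q_k)\le\Phi_k(\nabla K_0)$ is legitimate for every $k$. Steps~1 and~3 are routine, the only subtlety being the uniform-in-$k$ estimate $\ee_{ik}\le c'/k$, which must also hold for the finitely many small $k$.
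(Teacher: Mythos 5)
Your proposal is correct and follows essentially the same route as the paper: decompose $y_{ik}$ into $b_{K_0}(u_i)$ plus an $O(1/k)$ bias (via Lemma~\ref{unif1}) plus the noise $X_{ik}$, invoke the least squares property of Algorithm~NoisyBrightLSQ against a competitor with brightness function $b_{K_0}$ (you use $\nabla K_0$; the paper uses (\ref{newBB}) to extend the minimization to all convex bodies and substitutes $K_0$ itself, which is the same observation), and control the bias term by Cauchy--Schwarz. Your explicit treatment of the finitely many $k$ with $1/k>2r$ is a minor point the paper glosses over, and your ``main obstacle'' is resolved exactly as the paper does, by citing the formulation in \cite{GKM06}.
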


\begin{proof}
For $i=1,\ldots,k$, we have
$$y_{ik}=\frac{g_{K_0}(o)-g_{K_0}((1/k)u_i)}{1/k}+\frac1k
\sum_{j=1}^{k^2}(N_{ijk}^{(1)}-N_{ijk}^{(2)})= \mu_{ik}+X_{ik},$$
where the $X_{ik}$'s are row-wise independent zero mean random variables.  Note that the $y_{ik}$'s are also row-wise independent.  Furthermore, by Khinchine's inequality (see, for example, \cite[(4.32.1), p.~307]{Hof94} with $\alpha=6$), there is a constant $C$ such that
$$E\left(|X_{ik}|^6\right)\le \frac{C}{k^2}\sum_{j=1}^{k^2}E\left(\left|N_{ijk}^{(1)}-
N_{ijk}^{(2)}\right|^6\right),$$
from which we see that the $X_{ik}$'s also have uniformly bounded sixth moments. By Lemma~\ref{unif1},
$$\lim_{k\to\infty}\mu_{ik}=b_{K_0}(u_i).$$
In fact, the convergence is uniform.  This is because for each $u\in S^{n-1}$,
we have
$$b_{K_0}(u)\le b_{C_0^n}(u)\le b_{(\sqrt n/2)B^n}(u)=
(n/4)^{(n-1)/2}\kappa_{n-1}$$ and
\begin{equation}\label{new42}
0\le b_{K_0}(u)-\mu_{ik}\le
\left(1-\left(1-\frac{1}{2rk}\right)^{n-1}\right)b_{K_0}(u)\le \frac{n-1}{2rk}
b_{K_0}(u),\quad k\ge 1/(2r),
\end{equation}
by Lemma~\ref{unif1}, so there is a constant \refstepcounter{Cnmb}\label{c3}
$c_{\arabic{Cnmb}}=c_{\arabic{Cnmb}}(n,r)$ such that
\begin{equation}\label{eq2}
0\le b_{K_0}(u_i)-\mu_{ik}\le \frac{c_{\ref{c3}}}k,
\end{equation}
for all $k\in \N$ and $i=1,\ldots,k$.

By the formulation of Algorithms~NoisyCovBlaschke and~NoisyBrightLSQ (see
\cite[p.~1352]{GKM06} and take \cite[Proposition~2.1]{GKM06} into account), $Q_k$ minimizes
\begin{equation}\label{eq1}
\sum_{i=1}^k\left(b_{K}(u_i)-y_{ik}\right)^2
\end{equation}
over the class of all $o$-symmetric convex bodies $K$ in $\R^n$. By
(\ref{newBB}), for each convex body there is an $o$-symmetric convex body with
the same brightness function.  From this it follows that $Q_k$ is actually a
minimizer over the class of all convex bodies $K$ in $\R^n$. Substituting
$K=Q_k$ and $K=K_0$ in \eqref{eq1}, we obtain
$$
\sum_{i=1}^k \left(b_{Q_k}(u_i)-\mu_{ik}-X_{ik}\right)^2\le
\sum_{i=1}^k\left(b_{K_0}(u_i)-\mu_{ik}-X_{ik}\right)^2.
$$
Rearranging and using (\ref{pseudo2}), we obtain
$$
|b_{K_0}-b_{Q_k}|_k^2\le \frac2k\sum_{i=1}^k\left(
b_{Q_k}(u_i)-b_{K_0}(u_i)\right)(X_{ik}-(b_{K_0}(u_i)-\mu_{ik})).
$$
The definition of $\Psi$ and Cauchy-Schwarz inequality yields
$$
|b_{K_0}-b_{Q_k}|_k^2\le2\Psi(Q_k,(u_i),\mathbf{X}_k)-2
\Psi(K_0,(u_i),\mathbf{X}_k)+ 2|b_{K_0}-b_{Q_k}|_k \left(\frac1k \sum_{i=1}^k
(b_{K_0}(u_i)-\mu_{ik})^2\right)^{1/2}.
$$
In view of \eqref{eq2}, this proves (\ref{bi}) with
$c_{\ref{cbi}}=2c_{\ref{c3}}$.
\end{proof}

\begin{lem}\label{ub}
Suppose that the assumptions of Lemma~\ref{LEMbasicinequality} are satisfied
with a sequence  $(u_i)$ such that $(u_i^*)$ is evenly spread.  Suppose also that the second moments of the $X_{ik}$'s are uniformly bounded by a constant $C>0$. Then, almost surely, there are constants \refstepcounter{Cnmb}\label{c1}
$c_{\arabic{Cnmb}}=c_{\arabic{Cnmb}}(C,n,r,(u_i))$ and
\refstepcounter{Nnmb}\label{Nanewer} $N_{\arabic{Nnmb}}=N_{\arabic{Nnmb}}((X_{ik}),(u_i))$
such that
\begin{equation}\label{unifBound}
S(Q_k)\le c_{\ref{c1}},
\end{equation}
for all $k\ge N_{\ref{Nanewer}}$.
\end{lem}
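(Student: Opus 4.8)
The plan is to bound the single quantity $M_k:=\max_{u\in S^{n-1}}b_{Q_k}(u)$ and then invoke Cauchy's surface area formula \eqref{Cauchynew}, which turns an $L^\infty$-bound on the brightness function directly into the bound $S(Q_k)=\kappa_{n-1}^{-1}\int_{S^{n-1}}b_{Q_k}(u)\,du\le \frac{n\kappa_n}{\kappa_{n-1}}M_k$. To control $M_k$ I would first control the \emph{discrete} average $|b_{Q_k}|_k$ via the basic inequality \eqref{bi} and a strong law of large numbers, and then pass from the discrete average to the pointwise maximum using two facts: $b_{Q_k}=h_{\Pi Q_k}$ is $M_k$-Lipschitz on $S^{n-1}$ (because $\Pi Q_k$ is $o$-symmetric, its circumradius equals $\max_u h_{\Pi Q_k}(u)=M_k$), and the points $u_1,\dots,u_k$, taken up to sign, hit every spherical cap with positive density because $(u_i^*)$ is evenly spread.

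For the first step, observe that by (the proof of) Lemma~\ref{LEMbasicinequality} the $X_{ik}$ are row-wise independent with uniformly bounded sixth moments, so the $X_{ik}^2-E(X_{ik}^2)$ are row-wise independent, zero mean, with uniformly bounded third absolute moments; Proposition~\ref{stronglaw} (with $m_k=k$) gives $\frac1k\sum_{i=1}^k(X_{ik}^2-E(X_{ik}^2))\to0$ almost surely, and since $E(X_{ik}^2)\le C$, almost surely $\frac1k\sum_{i=1}^k X_{ik}^2\le C+1$ for all large $k$. Next, in \eqref{bi} bound $\Psi(Q_k,(u_i),\mathbf{X}_k)-\Psi(K_0,(u_i),\mathbf{X}_k)=\frac1k\sum_{i=1}^k(b_{Q_k}(u_i)-b_{K_0}(u_i))X_{ik}$ by the Cauchy--Schwarz inequality to get
\[
|b_{K_0}-b_{Q_k}|_k^2\le 2\,|b_{K_0}-b_{Q_k}|_k\left(\frac1k\sum_{i=1}^k X_{ik}^2\right)^{1/2}+c_{\ref{cbi}}\,|b_{K_0}-b_{Q_k}|_k,
\]
hence $|b_{K_0}-b_{Q_k}|_k\le 2\bigl(\frac1k\sum_i X_{ik}^2\bigr)^{1/2}+c_{\ref{cbi}}$, with $c_{\ref{cbi}}=c_{\ref{cbi}}(n,r)$. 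Combining, almost surely $|b_{K_0}-b_{Q_k}|_k\le 2\sqrt{C+1}+c_{\ref{cbi}}=:B$ for all large $k$, and since $K_0\subset C_0^n$ gives $b_{K_0}(u)\le b_{C_0^n}(u)\le\sqrt n$ (as in Corollary~\ref{lipcor}), the triangle inequality for $|\cdot|_k$ yields $|b_{Q_k}|_k\le B+\sqrt n=:B_0$.

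Finally, choose $u_0$ with $b_{Q_k}(u_0)=M_k$. By the Lipschitz bound, $b_{Q_k}(u)\ge M_k\bigl(1-|u-u_0|\bigr)\ge M_k/2$ for $u\in C_{1/2}(u_0)$, and also for $u\in C_{1/2}(-u_0)$ since $b_{Q_k}(-u)=b_{Q_k}(u)$. Apply the evenly-spread property of $(u_i^*)$ with $t=1/2$: for $2k\ge N(1/2)$ at least $2c(1/2)k$ of the points $\pm u_1,\dots,\pm u_k$ lie in $C_{1/2}(u_0)$, and as at most one of $u_i,-u_i$ can lie in that cap, there are at least $2c(1/2)k$ indices $i$ with $\{u_i,-u_i\}\cap C_{1/2}(u_0)\ne\emptyset$, whence $b_{Q_k}(u_i)\ge M_k/2$ for each such $i$. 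Therefore $B_0^2\ge|b_{Q_k}|_k^2\ge\frac1k\cdot 2c(1/2)k\cdot(M_k/2)^2=\tfrac12 c(1/2)M_k^2$, so $M_k\le B_0\sqrt{2/c(1/2)}$, and $S(Q_k)\le\frac{n\kappa_n}{\kappa_{n-1}}M_k\le c_{\ref{c1}}$ with $c_{\ref{c1}}=\frac{n\kappa_n}{\kappa_{n-1}}(B+\sqrt n)\sqrt{2/c(1/2)}$ depending only on $C$, $n$, $r$, and $(u_i)$, for all $k\ge N_{\ref{Nanewer}}$, where $N_{\ref{Nanewer}}$ is the maximum of $\lceil N(1/2)/2\rceil$ and the realization-dependent threshold from the first step, so $N_{\ref{Nanewer}}=N_{\ref{Nanewer}}((X_{ik}),(u_i))$.

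The main obstacle is conceptual rather than computational: one must realize that it suffices to bound the \emph{maximum} of the brightness function $b_{Q_k}$ — no control over the shape or diameter of $Q_k$ is needed, thanks to \eqref{Cauchynew} — and that the Lipschitz constant of $h_{\Pi Q_k}$, which is again $M_k$, cancels cleanly in the cap estimate so that the evenly-spread hypothesis converts the (already controlled) discrete mean square of $b_{Q_k}$ into a bound on $M_k^2$. Beyond that, the only care needed is bookkeeping on which constants are deterministic (they are, given $C$, $n$, $r$, $(u_i)$) versus which thresholds depend on the random realization.
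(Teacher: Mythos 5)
Your proof is correct, and its first half (Cauchy--Schwarz applied to \eqref{bi}, then Proposition~\ref{stronglaw} applied to $X_{ik}^2-E(X_{ik}^2)$ to get an almost sure bound on $|b_{K_0}-b_{Q_k}|_k$) is exactly the paper's argument. Where you diverge is in passing from the discrete bound to a pointwise bound on $b_{Q_k}$: the paper outsources this step to \cite[Lemma 7.1]{GKM06}, applied to the projection bodies $\Pi K_0$ and $\Pi Q_k$, which directly yields $b_{Q_k}\le c\,|b_{K_0}-b_{Q_k}|_k+2\sqrt{n}$; you instead prove the needed implication from scratch via the observation that $h_{\Pi Q_k}$ is $M_k$-Lipschitz on $S^{n-1}$ (with $M_k$ the circumradius of the $o$-symmetric body $\Pi Q_k$), so that the value near a maximizing direction stays above $M_k/2$ on a cap of radius $1/2$, and the evenly-spread hypothesis then forces a positive fraction of the sample directions into that cap (your remark that at most one of $u_i,-u_i$ can lie in a cap of radius $1/2$, so the count of points transfers to a count of indices, is the right bookkeeping). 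Both routes finish with Cauchy's surface area formula \eqref{Cauchynew}. What your version buys is self-containedness --- the cap argument is essentially a proof of the cited lemma in the special case needed here --- at the cost of a slightly longer exposition; the constants you track ($c_{\ref{c1}}$ deterministic in $C$, $n$, $r$, $(u_i)$; the threshold random only through the strong-law step) match the statement.
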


\begin{proof}
By the Cauchy-Schwarz inequality,
$$
\Psi(Q_k,(u_i),\mathbf{X}_k)-\Psi(K_0,(u_i),\mathbf{X}_k) \le
|b_{K_0}-b_{Q_k}|_k\left(\frac1k\sum_{i=1}^k X_{ik}^2\right)^{1/2}.
$$
This and \eqref{bi} imply that
$$
|b_{K_0}-b_{Q_k}|_k\le 2\left(\frac1k\sum_{i=1}^k X_{ik}^2\right)^{1/2}
+\frac{c_{\ref{cbi}}}k,
$$
for all $k\in \N$.  Since the $X_{ik}$'s have uniformly bounded sixth moments, we can apply Proposition~\ref{stronglaw} with $m_k$ and $X_{ik}$
replaced by $k$ and $X_{ik}^2-E\left(X_{ik}^2\right)$, respectively, to conclude that
the first term on the right-hand side is bounded, almost surely. Thus, almost surely, there are constants \refstepcounter{Cnmb}\label{cn}
$c_{\arabic{Cnmb}}=c_{\arabic{Cnmb}}(C,n,r)$ and
\refstepcounter{Nnmb}\label{Nanew} $N_{\arabic{Nnmb}}=N_{\arabic{Nnmb}}((X_{ik}),(u_i))$ such that
\begin{equation}\label{almostdone}
|b_{K_0}-b_{Q_k}|_k\le c_{\arabic{Cnmb}},
\end{equation}
for all $k\ge N_{\ref{Nanew}}$.
As $(u_i^*)$ is evenly spread, we can apply \cite[Lemma 7.1]{GKM06} with $K$
and $L$ replaced by $\Pi K_0$ and $\Pi Q_k$, respectively.  Using this, the
fact that $\Pi K_0\subset \Pi C_0^n=2C_0^n\subset \sqrt{n}B^n$ (see
\cite[p.~145]{Gar95a}), and \eqref{projbod}, we find that there are constants
\stepcounter{Cnmb} $c_{\arabic{Cnmb}}=c_{\arabic{Cnmb}}((u_i))$ and
\refstepcounter{Nnmb}\label{Na}
$N_{\arabic{Nnmb}}=N_{\arabic{Nnmb}}((u_i))$ such that
\begin{equation}\label{done}
b_{Q_k}\le c_{\arabic{Cnmb}}|b_{K_0}-b_{Q_k}|_k+2\sqrt{n},
\end{equation}
for $k\ge N_{\ref{Na}}$. Finally, (\ref{unifBound}) follows directly from
\eqref{almostdone}, \eqref{done}, and \eqref{Cauchynew}.
\end{proof}

\begin{lem}\label{extend}
Suppose that the assumptions of Lemma~\ref{LEMbasicinequality} are satisfied
with a sequence $(u_i)$ such that $(u_i^*)$ is evenly spread.  Then, almost
surely,
\begin{equation}\label{myConv}
\lim_{k\to \infty}|b_{K_0}-b_{Q_k}|_k=0.
\end{equation}
\end{lem}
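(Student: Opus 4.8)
The plan is to show $|b_{K_0}-b_{Q_k}|_k \to 0$ by combining the basic inequality \eqref{bi} from Lemma~\ref{LEMbasicinequality} with a strong law of large numbers argument, following the pattern already used in the proof of Lemma~\ref{pseudcov} but now on the sphere. First I would fix a realization in the almost-sure event on which \eqref{bi} holds and on which the relevant strong laws hold. From Lemma~\ref{ub} we already know that, almost surely and for $k$ large, $S(Q_k)\le c_{\ref{c1}}$; by Cauchy's surface area formula \eqref{Cauchynew} this gives a uniform integral bound on $b_{Q_k}$, and since $Q_k$ is $o$-symmetric with uniformly bounded surface area, Lemma~\ref{inrad} (or a direct argument) shows the $Q_k$ lie in a fixed ball $a_1 B^n$ for $k$ large. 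Thus we may restrict attention to a compact class of bodies.

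The key step is to control the two $\Psi$-terms on the right of \eqref{bi}. The term $\Psi(K_0,(u_i),\mathbf{X}_k)=\frac1k\sum_{i=1}^k b_{K_0}(u_i)X_{ik}$ has a fixed bounded multiplier sequence $b_{K_0}(u_i)\le (n/4)^{(n-1)/2}\kappa_{n-1}$, so by Proposition~\ref{stronglaw} (the $X_{ik}$'s are row-wise independent, zero mean, with uniformly bounded sixth, hence third, moments) it tends to $0$ almost surely. The term $\Psi(Q_k,(u_i),\mathbf{X}_k)$ is the hard one, since $Q_k$ is random and depends on the very measurements entering $X_{ik}$ — but $Q_k$ is built from $g_{K_0}$-measurements at the points $(1/k)u_i$, which are the {\em same} randomness as in $X_{ik}$, so one cannot simply invoke independence. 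The remedy is a bracketing/covering argument analogous to Lemma~\ref{bound}: the relevant class of functions $\{b_Q : Q \text{ $o$-symmetric}, Q\subset a_1B^n\}$ is a uniformly bounded, equi-Lipschitz (hence totally bounded in sup norm on $S^{n-1}$) family, so for each $\ee>0$ one covers it by finitely many brackets, replaces $b_{Q_k}$ by the nearest bracket function, applies Proposition~\ref{stronglaw} to each of the finitely many fixed bracket functions against $(X_{ik})$, and estimates the bracketing error using the uniform moment bound on the $X_{ik}$ (passing to $\frac1k\sum|X_{ik}|$, which is almost surely bounded by a strong law argument as in Lemma~\ref{ub}). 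This yields $\sup_Q |\Psi(Q,(u_i),\mathbf{X}_k)| \to 0$ almost surely, where the supremum is over $o$-symmetric $Q\subset a_1B^n$; in particular $\Psi(Q_k,(u_i),\mathbf{X}_k)\to 0$.

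With both $\Psi$-terms tending to zero, \eqref{bi} becomes, for $k$ large,
$$
|b_{K_0}-b_{Q_k}|_k^2 \le o(1) + \frac{c_{\ref{cbi}}}{k}|b_{K_0}-b_{Q_k}|_k,
$$
and since we also know from Lemma~\ref{ub} that $|b_{K_0}-b_{Q_k}|_k$ is bounded (by \eqref{almostdone}), the last term is $O(1/k)=o(1)$. Hence $|b_{K_0}-b_{Q_k}|_k^2\to 0$, which is \eqref{myConv}. I expect the main obstacle to be the rigorous justification of the uniform convergence $\sup_Q|\Psi(Q,(u_i),\mathbf{X}_k)|\to 0$: one must verify that the class of brightness functions of $o$-symmetric bodies in a fixed ball is totally bounded for the sup norm (via the standard Lipschitz estimate $|b_Q(u)-b_Q(v)|\le C(a_1)|u-v|$ for $Q\subset a_1B^n$), and then carefully run the bracketing argument with the $X_{ik}$, which — unlike the $N_{ik}$ in Lemma~\ref{bound} — are themselves normalized sums of $k^2$ noise variables, so one should track that their second moments are uniformly bounded (this is exactly the extra hypothesis imposed in Lemma~\ref{ub}, so it is available here) in order to apply the strong law to $X_{ik}^2-E(X_{ik}^2)$ and to the bracket functions.
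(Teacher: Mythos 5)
Your overall architecture is sound and matches the paper's: fix a realization, use \eqref{bi} together with the boundedness \eqref{almostdone} to reduce everything to showing that both $\Psi$-terms vanish, handle $\Psi(K_0,(u_i),\mathbf{X}_k)$ by Proposition~\ref{stronglaw}, and handle $\Psi(Q_k,(u_i),\mathbf{X}_k)$ by proving a \emph{uniform} strong law over a deterministic class of bodies containing $Q_k$ (your observation that one cannot invoke independence of $Q_k$ from the $X_{ik}$'s is exactly the point). Where you genuinely diverge is in how that uniform law is proved. You bracket the class of brightness functions directly in the sup norm on $S^{n-1}$ (uniform bound plus equi-Lipschitz, hence totally bounded) and apply the strong law to each of finitely many fixed bracket functions. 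The paper instead pushes the supremum inside via Cauchy's projection formula \eqref{Cauchy}: for $S(K)\le m$ it bounds $|\Psi(K,(u_i),\mathbf{X}_k)|$ by $\tfrac{m}{2}\sup_{v}|\tfrac1k\sum_i|u_i\cdot v|X_{ik}|$ and then runs a $\delta/2$-net argument on $S^{n-1}$ for the single Lipschitz family $v\mapsto|u_i\cdot v|$. The two routes are of comparable difficulty; the paper's has the advantage that the only input about $Q_k$ is the surface-area bound \eqref{unifBound} from Lemma~\ref{ub}, while yours requires identifying a totally bounded function class containing $b_{Q_k}$.

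That last point is where your write-up has an error. You claim that since $Q_k$ is $o$-symmetric with uniformly bounded surface area, Lemma~\ref{inrad} ``or a direct argument'' places $Q_k$ in a fixed ball $a_1B^n$. Lemma~\ref{inrad} does not apply (its hypothesis is a Prohorov bound on surface area measures, which you do not have at this stage), and the direct claim is false: a long thin $o$-symmetric ``needle'' has arbitrarily large diameter with arbitrarily small surface area. Containment of $Q_k$ in a fixed ball is only established later, in the proof of Theorem~\ref{covB}, and it uses the conclusion of the present lemma. Fortunately your argument does not need it: the class $\{b_K: S(K)\le m\}$ is already uniformly bounded (by $m/2$, from \eqref{Cauchy}) and equi-Lipschitz (since $|b_K(u)-b_K(v)|\le\tfrac12\int|{\,|u\cdot w|-|v\cdot w|\,}|\,dS(K,w)\le\tfrac{m}{2}|u-v|$), so you should run your bracketing over this class, exactly as the paper restricts to $\mathcal{H}_m$. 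With that substitution your proof goes through.
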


\begin{proof}
Choose a constant $C_1$ such that $E\left(|X_{ik}|^2\right)\le C_1$ for all $i$ and $k$.  Due to \eqref{bi} and  \eqref{almostdone}, there is, almost surely, a constant
\refstepcounter{Cnmb}\label{cgood}
$c_{\arabic{Cnmb}}=c_{\arabic{Cnmb}}(C_1,n,r)$ such that
\begin{equation}\label{eqff}
|b_{Q_k}-b_{K_0}|_k^2\le 2\Psi(Q_k,(u_i),\mathbf{X}_k)-2
\Psi(K_0,(u_i),\mathbf{X}_k)+\frac{c_{\ref{cgood}}}k,
\end{equation}
for all $k\ge N_{\ref{Nanew}}$. By Proposition~\ref{stronglaw} with $m_k=k$ and
$X_{ik}$ replaced by $b_{K_0}(u_i)X_{ik}$, the variable
$\Psi(K_0,(u_i),\mathbf{X}_k)$ converges to zero, almost surely, as $k\to
\infty$.

For $m\in \N$, let ${\mathcal H}_m=\{ K\in \cK^n: S(K)\le m\}$. If we can show
that for all $m\in \N$, almost surely,
\begin{equation}\label{unif}
\lim_{k\to \infty} \sup_{K\in {\mathcal H}_m}|\Psi(K,(u_i),\mathbf{X}_k)|=0,
\end{equation}
then by \eqref{unifBound}, almost surely,
$$\lim_{k\to \infty}\Psi(Q_k,(u_i),\mathbf{X}_k)=0.$$
This and \eqref{eqff} will yield \eqref{myConv}, completing the proof.

To prove (\ref{unif}), note first that by \eqref{Cauchy}, we have
$$|\Psi(K,(u_i),\mathbf{X}_k)|=\left|\frac{1}k\sum_{i=1}^k b_K(u_i)X_{ik}\right|
\le\frac12\int_{S^{n-1}}\left|\frac{1}k\sum_{i=1}^k| u_i\cdot v |X_{ik}\right|
\,dS(K,v).$$ Since $S(K)=S(K,S^{n-1})\le m$ for $K\in{\mathcal H}_m$, it is
enough to prove that, almost surely,
\begin{equation}\label{enough}
 \lim_{k\to \infty} \sup_{v\in S^{n-1}}\left|
\frac{1}k\sum_{i=1}^k| u_i\cdot v |X_{ik}\right|=0.
\end{equation}
This follows essentially from the uniform continuity of the function $|u_i\cdot
v|$, $v\in S^{n-1}$, and the fact that $S^{n-1}$ is compact. Indeed, suppose that
\eqref{enough} does not hold almost surely. Choose a constant $C_2$ such that $E(|X_{ik}|)\le C_2$ for all $i$ and $k$. Then there is a $\delta>0$ such
that
\begin{equation}\label{unif2}
\limsup_{k\to \infty} \sup_{v\in S^{n-1}} \frac{1}k\sum_{i=1}^k| u_i\cdot
v|X_{ik}>\delta C_2
\end{equation}
with positive probability. Let $\{w_1,\ldots,w_m\}$ be a $\delta/2$-net in
$S^{n-1}$. For any realization and any $k\in \N$, there is a $v_k\in S^{n-1}$
such that
\begin{equation}\label{zzz}
\frac{1}k\sum_{i=1}^k|u_i\cdot v_k|X_{ik}= \sup_{v\in S^{n-1}}
\frac{1}k\sum_{i=1}^k|u_i\cdot v|X_{ik}.
\end{equation}
Let $A_j$ denote the set of all events such that an accumulation point of
$(v_k)$ has distance at most $\delta/2$ from $w_j$, $j=1,\ldots,m$. For a
realization in $A_j$ and any subsequence $(k')$ of $(k)$ such that
$|v_{k'}-w_j|\le \delta$ holds for sufficiently large $k$, we have, almost
surely,
$$\limsup_{k'\to \infty} \left|\frac{1}{k'}\sum_{i=1}^{k'}|u_i\cdot v_{k'}|X_{ik'}
-\frac{1}{k'}\sum_{i=1}^{k'}|u_i\cdot w_j|X_{ik'} \right|\le \delta
\limsup_{k'\to \infty}\frac{1}{k'}\sum_{i=1}^{k'} |X_{ik'}|\le \delta C_2,
$$
by Proposition~\ref{stronglaw} with $m_k$ and $X_{ik}$ replaced by $k'$ and $|X_{ik'}|-E(|X_{ik'}|)$, respectively. But Proposition~\ref{stronglaw}, with $m_k$ and
$X_{ik}$ replaced by $k'$ and $|u_i\cdot w_j|X_{ik'}$, respectively, also
implies that, almost surely, the second term on the left-hand side converges to
zero, as $k'\rightarrow\infty$. In view of (\ref{zzz}), this yields
$$\limsup_{k'\to \infty}\sup_{v\in S^{n-1}} \frac{1}{k'}\sum_{i=1}^{k'}|u_i\cdot
v|X_{ik'}\le\delta C_2,
$$
for almost all events in $A_j$. As any sequence in $S^{n-1}$ has at least one
accumulation point, the latter inequality holds, almost surely, contradicting
\eqref{unif2}.
\end{proof}

\begin{thm}\label{covB}
Let $K_0\subset C_0^n$ be a convex body with its centroid at the origin. Let
$(u_i)$ be a sequence in $S^{n-1}$ such that $(u_i^*)$ is evenly spread.  If
$Q_k$ is an output from Algorithm~NoisyCovBlaschke as stated above, then,
almost surely,
\begin{equation}\label{cons}
\lim_{k\to\infty} \delta(\nabla K_0,Q_k)=0.
\end{equation}
\end{thm}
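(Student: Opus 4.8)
Lemmas~\ref{ub} and~\ref{extend} have already done the bulk of the work, so the plan is to convert the pseudonorm convergence $|b_{K_0}-b_{Q_k}|_k\to 0$ provided by Lemma~\ref{extend} into the Hausdorff convergence $\delta(\nabla K_0,Q_k)\to 0$. I would begin by fixing a realization on which the almost-sure conclusions of Lemmas~\ref{ub} and~\ref{extend} both hold. Since $K_0$ is a convex body with centroid at the origin, the origin lies in $\inte K_0$, so there is an $r>0$ with $rB^n\subset K_0$; hence the hypotheses of Lemma~\ref{LEMbasicinequality} are met, and moreover $\kappa_{n-1}r^{n-1}=b_{rB^n}(u)\le b_{K_0}(u)\le b_{C_0^n}(u)\le\sqrt{n}$ for all $u\in S^{n-1}$. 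From inequality~\eqref{done} in the proof of Lemma~\ref{ub} together with $|b_{K_0}-b_{Q_k}|_k\to 0$, I would next record that $b_{Q_k}\le 3\sqrt{n}$, say, for all large $k$; equivalently $\Pi Q_k\subset 3\sqrt{n}\,B^n$ eventually, so the functions $b_{Q_k}=h_{\Pi Q_k}$ are Lipschitz with a constant independent of $k$.

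The key step is then to upgrade the pseudonorm convergence to uniform convergence, $\|b_{K_0}-b_{Q_k}\|_\infty\to 0$. If this failed, there would be a $\delta>0$, a subsequence $(k')$, and directions $v_{k'}$ with $|b_{K_0}(v_{k'})-b_{Q_{k'}}(v_{k'})|>\delta$; by the common Lipschitz bound on $b_{K_0}$ and $b_{Q_{k'}}$, the inequality $|b_{K_0}-b_{Q_{k'}}|>\delta/2$ would then hold on an entire spherical cap of fixed radius about $v_{k'}$, and since $(u_i^\ast)$, and hence $(u_i)$, is evenly spread, that cap contains at least $ck'$ of the sample directions $u_1,\dots,u_{k'}$, contradicting $|b_{K_0}-b_{Q_{k'}}|_{k'}\to 0$. (This is the deterministic analogue of the argument already used for \eqref{enough}, and is the content of \cite[Lemma~7.1]{GKM06}.) Once $\|b_{K_0}-b_{Q_k}\|_\infty\to 0$, the lower bound $b_{K_0}\ge\kappa_{n-1}r^{n-1}$ forces $b_{Q_k}\ge\tfrac12\kappa_{n-1}r^{n-1}$ for all large $k$, so $\Pi Q_k$ lies in a fixed ${\mathcal K}^n(r_1,R_1)$ eventually. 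An $o$-symmetric convex body whose brightness function is bounded above and below lies in a fixed ${\mathcal K}^n(r_2,R_2)$ — a large diameter would force the brightness orthogonal to that diameter to be large, which bounds the circumradius, and $\Pi Q_k\supset r_1 B^n$ then bounds the inradius — so $Q_k\in{\mathcal K}^n(r_2,R_2)$ eventually. (This elementary fact is also part of the machinery of \cite{GKM06} and \cite{GarM03}.) The same reasoning, applied to the fixed body $\nabla K_0$, which is $o$-symmetric with $b_{\nabla K_0}=b_{K_0}$ by \eqref{newBB}, puts $\nabla K_0\in{\mathcal K}^n(r_2,R_2)$ as well.

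To finish, I would use \eqref{projbod} and \eqref{newBB} to write
$$\delta(\Pi K_0,\Pi Q_k)=\delta\bigl(\Pi(\nabla K_0),\Pi Q_k\bigr)=\|b_{K_0}-b_{Q_k}\|_\infty\to 0,$$
and then invoke the Bourgain--Campi--Lindenstrauss stability estimate for projection bodies of $o$-symmetric convex bodies in ${\mathcal K}^n(r_2,R_2)$ (in the form used in \cite{GKM06}) to obtain $\delta(\nabla K_0,Q_k)\le c\,\delta(\Pi K_0,\Pi Q_k)^\alpha\to 0$, which is \eqref{cons}.

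The main obstacle is the second step: turning the averaged (pseudonorm) agreement of brightness values at the sample directions into genuine uniform agreement on all of $S^{n-1}$. This is precisely where the hypothesis that $(u_i^\ast)$ is evenly spread is indispensable, and it depends on first having secured, via \eqref{done}, the $k$-uniform Lipschitz bound on $b_{Q_k}$. A secondary point requiring care is that bounded brightness alone does not bound a convex body, so one must exploit the two-sided bound on $b_{Q_k}$ to confine $Q_k$ itself to a fixed class ${\mathcal K}^n(r_2,R_2)$ before the projection-body stability result can be applied; and all of these bounds and convergences are statements about the tail of the sequence only.
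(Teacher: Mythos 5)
Your proposal is correct and follows essentially the same route as the paper: the paper likewise passes from the pseudonorm convergence of Lemma~\ref{extend} to $\delta(\Pi K_0,\Pi Q_k)\to 0$ by invoking the proof of \cite[Theorem~6.1]{GKM06} (the uniform-Lipschitz-plus-evenly-spread argument you spell out), then confines $\nabla K_0$ and $Q_k$ to a fixed class ${\mathcal K}^n(r_0,R_0)$ via the two-sided brightness bounds as in \cite[Theorem~7.2]{GKM06}, and concludes with the Bourgain--Campi--Lindenstrauss stability estimate. The only cosmetic differences are that the paper obtains the uniform bound on $\Pi Q_k$ from $b_{Q_k}\le S(Q_k)$ and \eqref{unifBound} rather than directly from \eqref{done}, and that even spread of $(u_i^*)$ does not literally imply even spread of $(u_i)$ --- at that step one also uses the evenness of the brightness functions, which is exactly how \cite[Lemma~7.1]{GKM06} is stated and applied.
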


\begin{proof}
We have $o\in\inte K_0$, so there is an $r>0$ such that $rB^n\subset K_0$. By
Lemmas~\ref{ub} and~\ref{extend}, we can fix a realization for which both
\eqref{unifBound} and (\ref{myConv}) are true. Using (\ref{projbod}), we
observe that (\ref{myConv}) is equivalent to
\begin{equation}\label{hh}
\lim_{k\to \infty} |h_{\Pi K_0}- h_{\Pi Q_k}|_k=0.
\end{equation}
We also have $h_{\Pi Q_k}=b_{Q_k}\le S(Q_k)$, so by \eqref{unifBound}, the sets
$\Pi Q_k$ are uniformly bounded.  With these observations and the fact that $(u_1,-u_1,u_2,-u_2,\dots)$ is evenly spread, we can follow the proof of \cite[Theorem~6.1]{GKM06}), from the fourth line, with $K$ and $\hat{P}_k$ replaced by $\Pi K_0$ and $\Pi Q_k$, respectively, to conclude that
\begin{equation}\label{picon}
\lim_{k\rightarrow\infty} \delta(\Pi K_0,\Pi Q_k)=0.
\end{equation}
Now $rB^n\subset K_0\subset C_0^n$ yields $sB^n\subset \Pi K_0\subset tB^n$
with $s=\kappa_{n-1}r^{n-1}$ and $t=\sqrt{n}$.  Moreover, (\ref{projbod}) and
(\ref{newBB}) give $\Pi(\nabla K_0)=\Pi K_0$.  Hence (\ref{picon}) implies that
$$\frac{s}{2}B^n\subset \Pi(\nabla K_0), \Pi Q_k \subset \frac{3t}{2}B^n,$$
for sufficiently large $k$, where $s$ and $t$ depend only on $n$ and $r$.
Exactly as in the proof from (48) to (49) of \cite[Theorem~7.2]{GKM06} (which
in turn follows the proof of \cite[Lemma~4.2]{GarM03}), this leads to
$$r_0B^n\subset \nabla K_0, Q_k \subset R_0B^n,$$
for sufficiently large $k$, where $r_0>0$ and $R_0$ depend only on $n$ and $r$.
Then (\ref{cons}) follows from (\ref{picon}) and the
Bourgain-Campi-Lindenstrauss stability result for projection bodies (see
\cite{BouL88} and \cite{Cam88}, or \cite[Remark~4.3.13]{Gar95a}).
\end{proof}

\section{Approximating the difference body via the covariogram}\label{diffe}

Throughout this section, $\varphi$ will be a nonnegative bounded measurable
function on $\R^n$ with support in $C_0^n$, such that
$\int_{\R^n}\varphi(x)\,dx=1$.

\bigskip

{\large{\bf Algorithm~NoisyCovDiff($\varphi$)}}

\bigskip

{\it Input:} Natural numbers $n\ge 2$ and $k$; positive reals $\delta_k$ and
$\ee_k$; noisy covariogram measurements
\begin{equation}\label{meas22}
M_{ik}=g_{K_0}(x_{ik})+N_{ik},
\end{equation}
of an unknown convex body $K_0\subset C_0^n$ at the points $x_{ik}$,
$i=1,\dots,I_k$ in the cubic array $2C_0^n\cap (1/k)\Z^n$, where the $N_{ik}$'s are row-wise independent zero mean random variables with uniformly bounded fourth moments.

\smallskip

{\it Task:} Construct an $o$-symmetric convex polytope $Q_k$ in $\R^n$ that
approximates the difference body $DK_0$.

\smallskip

{\it Action:}

1.  Let $\varphi_{\ee_k}(x)=\ee_k^{-n} \varphi(x/\ee_k)$ for $x\in \R^n$, and
let
\begin{equation}\label{kkernel}
g_k(x)=\sum_{i=1}^{I_k} M_{ik}\int_{(1/k)C_0^n+x_{ik}}\varphi_{\ee_k}(x-z)\,dz
=\left(\sum_{i=1}^{I_k} M_{ik}\,1_{(1/k)C_0^n+x_{ik}}\right)\ast
\varphi_{\ee_k}\,(x).
\end{equation}

2. Define the finite set
\begin{equation}\label{Sk}
S_k=\{x\in 2C_0^n\cap (1/k)\Z^n : g_k(x)\ge \delta_k\}.
\end{equation}
The output is the convex polytope $Q_k=(1/2)(\conv S_k+(-\conv S_k))$.

\bigskip

The input $\delta_k$ in the algorithm is a threshold parameter. The function
$g_k(x)$ is a Gasser-M\"uller type kernel estimator for $g_{K_0}$ with kernel
function $\varphi$ and bandwidth $\ee_k$. As the design points $x_{ik}$ are
deterministic, $g_k$ is a multivariate fixed design kernel estimator. Such
estimators are common in multivariate regression and are discussed in detail by
Ahmad and Lin \cite{ahm:lin84}. Among other things, strong pointwise
consistency and a bound for the rate of weak pointwise convergence are given
there. We shall need uniform bounds and establish them in the next two lemmas.
By \cite[Theorem 1]{ahm:lin84}, for any $x\in \R^n$, $g_k(x)$ is an
asymptotically unbiased estimator for $g_{K_0}(x)$, if $\ee_k\to 0$ as
$k\to\infty$. We shall show that this holds uniformly in $x$.

\begin{lem}\label{unifAsympUnbias}
Suppose that $K_0$, $\ee_k$, and $g_k$ are as in
Algorithm~NoisyCovDiff($\varphi$). For each $k\in \N$ and $x\in \R^n$,
$$
|E\left(g_k(x)\right)-g_{K_0}(x)|\le n(\ee_k+1/k).
$$
Consequently, $g_k$ is uniformly asymptotically unbiased whenever
$\lim_{k\to\infty}\ee_k=0$.
\end{lem}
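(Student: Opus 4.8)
The plan is to estimate $E(g_k(x))$ directly from the definition \eqref{kkernel} and compare it to $g_{K_0}(x)$ using the Lipschitz bound on the covariogram from Corollary~\ref{lipcor}. Since each $N_{ik}$ has zero mean, linearity of expectation gives
$$E(g_k(x))=\sum_{i=1}^{I_k}g_{K_0}(x_{ik})\int_{(1/k)C_0^n+x_{ik}}\varphi_{\ee_k}(x-z)\,dz.$$
The first observation is that $\{(1/k)C_0^n+x_{ik}:i=1,\dots,I_k\}$ is essentially a partition of $2C_0^n$ (up to boundary overlaps of measure zero), so, using $\int_{\R^n}\varphi_{\ee_k}(x-z)\,dz=1$ and the fact that $\varphi_{\ee_k}$ is supported in $\ee_k C_0^n$, the weights $w_{ik}(x):=\int_{(1/k)C_0^n+x_{ik}}\varphi_{\ee_k}(x-z)\,dz$ are nonnegative and sum to $1$ (here one may need to extend the sum slightly or note that for $x$ near $\partial(2C_0^n)$ part of the mass falls outside, which only helps since $g_{K_0}$ vanishes there; in any case $\sum_i w_{ik}(x)\le 1$ and equals $1$ when the support of $\varphi_{\ee_k}(x-\cdot)$ meets $2C_0^n$). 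Thus $E(g_k(x))$ is a convex combination of the values $g_{K_0}(x_{ik})$.

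The next step is to control how far the relevant $x_{ik}$ can be from $x$. The weight $w_{ik}(x)$ is nonzero only if $\big((1/k)C_0^n+x_{ik}\big)\cap\big(x-\ee_k C_0^n\big)\ne\emptyset$, which forces $|x-x_{ik}|_\infty\le (\ee_k+1/k)/2$, hence $|x-x_{ik}|\le \sqrt{n}(\ee_k+1/k)/2$. Combining the convex-combination representation with Corollary~\ref{lipcor},
$$|E(g_k(x))-g_{K_0}(x)|\le\sum_{i}w_{ik}(x)\,|g_{K_0}(x_{ik})-g_{K_0}(x)|\le\sqrt{n}\cdot\frac{\sqrt{n}(\ee_k+1/k)}{2}=\frac{n}{2}(\ee_k+1/k),$$
which is even sharper than the claimed bound $n(\ee_k+1/k)$; one just quotes the weaker form, or handles the boundary case $\sum_i w_{ik}(x)<1$ by adding a term $\big(1-\sum_i w_{ik}(x)\big)g_{K_0}(x)$, which is at most $V(C_0^n)=1$ times the missing mass but is actually bounded by the Lipschitz estimate applied to the nearest boundary point, again absorbed into $n(\ee_k+1/k)$. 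The final sentence is then immediate: if $\ee_k\to0$ then $\ee_k+1/k\to0$, so the bias tends to $0$ uniformly in $x$.

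The only slightly delicate point — the ``main obstacle,'' such as it is — is the bookkeeping at the boundary of $2C_0^n$, i.e., making sure the weights genuinely behave as claimed when $x$ lies within distance $\ee_k$ of $\partial(2C_0^n)$, where the partition cells $(1/k)C_0^n+x_{ik}$ may not tile the full support of $\varphi_{\ee_k}(x-\cdot)$. This is handled by the observation that $g_{K_0}$ is supported in $DK_0\subset 2C_0^n$ and vanishes continuously on $\partial(2C_0^n)$, so the missing weight multiplies values of $g_{K_0}$ that are themselves $O(\ee_k+1/k)$ by the Lipschitz property; no case analysis beyond this remark is needed.
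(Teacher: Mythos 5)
Your proof is correct and follows essentially the same route as the paper's: write $E(g_k(x))$ as a weighted average of the values $g_{K_0}(x_{ik})$, observe that only indices with $x_{ik}-x\in(\ee_k+1/k)C_0^n$ carry weight, and apply the Lipschitz bound of Corollary~\ref{lipcor}. Your explicit treatment of the boundary deficit $1-\sum_i w_{ik}(x)$ (which the paper's first displayed inequality silently drops, though harmlessly) is a welcome extra check, and your constant $n/2$ is indeed slightly sharper than the stated $n$.
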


\begin{proof}
Using (\ref{meas22}), (\ref{kkernel}), and the definition of $\varphi_{\ee_k}$,
we obtain
\begin{equation}\label{aaa}
|E\left(g_k(x)\right)-g_{K_0}(x)|\le \sum_{i=1}^{I_k}
|g_{K_0}(x_{ik})-g_{K_0}(x)|\int_{(1/k)C_0^n+x_{ik}}\varphi_{\ee_k}(x-z)\,dz,
\end{equation}
for all $x\in \R^n$.  The support of $\varphi_{\ee_k}$ is contained in
$\ee_kC_0^n$, so for fixed $x$, the support of the integrand
$\varphi_{\ee_k}(x-z)$ is contained in $\ee_kC_0^n+x$. Now if $x_{ik}\not\in
(\ee_k+1/k)C_0^n+x$, then $\ee_kC_0^n+x$ and $(1/k)C_0^n+x_{ik}$ are disjoint,
so the corresponding summand in \eqref{aaa} vanishes. Moreover, for $x_{ik}\in
(\ee_k+1/k)C_0^n+x$, Corollary~\ref{lipcor} and the fact that the diameter of
$C_0^n$ is $\sqrt{n}$ imply that
$$|g_{K_0}(x_{ik})-g_{K_0}(x)|\le n(\ee_k+1/k).$$
Consequently,
\begin{eqnarray*}
|E\left(g_k(x)\right)-g_{K_0}(x)|&\le & n (\ee_k+1/k) \sum_{i=1}^{I_k}
\int_{(1/k)C_0^n+x_{ik}}\varphi_{\ee_k}(x-z)\, dz\\
&\le & n (\ee_k+1/k)\int_{\R^n}\varphi_{\ee_k}(x-z)\,dz=n (\ee_k+1/k),\\
\end{eqnarray*}
as required.
\end{proof}

In \cite[Lemma 1]{ahm:lin84}, a polynomial rate of convergence result in the
weak sense is established for independent identically distributed measurement errors with polynomial tails. In contrast, we assume only uniformly bounded fourth moments and obtain a convergence rate that holds uniformly, using the Lipschitz continuity of the covariogram.

\begin{lem}\label{expo}
Suppose that $K_0$, $\ee_k$, and $g_k$ are as in
Algorithm~NoisyCovDiff($\varphi$) and let $\delta>0$ and $\lim_{k\to \infty}
\ee_k=0$. Then there are constants \refstepcounter{Cnmb}\label{caa222}
$c_{\arabic{Cnmb}}=c_{\arabic{Cnmb}}(\varphi)$ \refstepcounter{Nnmb}\label{na}
and $N_{\arabic{Nnmb}}=N_{\arabic{Nnmb}}((\ee_k),n)\in \N$ such that
\begin{equation}\label{exp1}
{\mathrm{Pr}}\,(|g_k(x)-g_{K_0}(x)|>\delta)\le
c_{\ref{caa222}}(2k+1)^n\delta^{-4}(k\ee_k)^{-3n},
\end{equation}
for all  $k\ge N_{\ref{na}}$ and all $x\in \R^n$.
\end{lem}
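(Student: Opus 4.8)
The plan is to decompose $g_k(x)-g_{K_0}(x)$ into a deterministic bias and a centered stochastic part, control the bias by Lemma~\ref{unifAsympUnbias}, and estimate the stochastic part by Markov's inequality applied to fourth moments; uniformity in $x$ will be built in from the start.

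Write $g_k(x)=\sum_{i=1}^{I_k}M_{ik}w_{ik}(x)$, where $w_{ik}(x)=\int_{(1/k)C_0^n+x_{ik}}\varphi_{\ee_k}(x-z)\,dz\ge 0$. Since $M_{ik}=g_{K_0}(x_{ik})+N_{ik}$ and $E(N_{ik})=0$, we have $g_k(x)-g_{K_0}(x)=\sum_{i=1}^{I_k}N_{ik}w_{ik}(x)+D_k(x)$, where $D_k(x)=E(g_k(x))-g_{K_0}(x)$ satisfies $|D_k(x)|\le n(\ee_k+1/k)$ for every $x$ by Lemma~\ref{unifAsympUnbias}. Because $\ee_k\to 0$, I would choose $N_{\ref{na}}$ large enough that $n(\ee_k+1/k)<\delta/2$ and $\ee_k\le 1$ whenever $k\ge N_{\ref{na}}$; then, for such $k$, the event $\{|g_k(x)-g_{K_0}(x)|>\delta\}$ is contained in $\{|\sum_{i=1}^{I_k}N_{ik}w_{ik}(x)|>\delta/2\}$, so by Markov's inequality the left-hand side of \eqref{exp1} is at most $(2/\delta)^4E\bigl((\sum_{i=1}^{I_k}N_{ik}w_{ik}(x))^4\bigr)$.

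The core is a fourth-moment estimate uniform in $x$. Expanding the fourth power and using that the $N_{ik}$ are, for fixed $k$, independent with vanishing mean, every term in which some index occurs exactly once drops out, leaving $E\bigl((\sum_i N_{ik}w_{ik}(x))^4\bigr)=\sum_i w_{ik}(x)^4E(N_{ik}^4)+3\sum_{i\ne j}w_{ik}(x)^2w_{jk}(x)^2E(N_{ik}^2)E(N_{jk}^2)$. The uniformly bounded fourth moments give $E(N_{ik}^4)\le C$ and hence $E(N_{ik}^2)\le C^{1/2}$, so this is at most $C\sum_i w_{ik}(x)^4+3C\bigl(\sum_i w_{ik}(x)^2\bigr)^2$. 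I would then use two elementary, $x$-independent facts about the weights: $\sum_i w_{ik}(x)\le\int_{\R^n}\varphi_{\ee_k}(x-z)\,dz=1$, because the cubes $(1/k)C_0^n+x_{ik}$ have pairwise disjoint interiors; and $\max_i w_{ik}(x)\le\|\varphi\|_\infty V((1/k)C_0^n)/\ee_k^{\,n}=\|\varphi\|_\infty(k\ee_k)^{-n}$. Since the $w_{ik}(x)$ are nonnegative, these yield $\sum_i w_{ik}(x)^4\le(\max_i w_{ik}(x))^3\sum_i w_{ik}(x)\le\|\varphi\|_\infty^3(k\ee_k)^{-3n}$ and $\bigl(\sum_i w_{ik}(x)^2\bigr)^2\le\bigl((\max_i w_{ik}(x))\sum_i w_{ik}(x)\bigr)^2\le\|\varphi\|_\infty^2(k\ee_k)^{-2n}$.

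To conclude, for $k\ge N_{\ref{na}}$ we have $\ee_k\le 1$, hence $(k\ee_k)^{-2n}=(k\ee_k)^n(k\ee_k)^{-3n}\le k^n(k\ee_k)^{-3n}\le(2k+1)^n(k\ee_k)^{-3n}$; combining the last displays gives $E\bigl((\sum_i N_{ik}w_{ik}(x))^4\bigr)\le(C\|\varphi\|_\infty^3+3C\|\varphi\|_\infty^2)(2k+1)^n(k\ee_k)^{-3n}$, which is \eqref{exp1} with $c_{\ref{caa222}}=16C(\|\varphi\|_\infty^3+3\|\varphi\|_\infty^2)$, a constant depending only on $\varphi$ (the moment bound $C$ being fixed). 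I expect the only point requiring care to be the bookkeeping of powers of $k\ee_k$: it is the diagonal term $\sum_i w_{ik}^4$, estimated through $(\max_i w_{ik})^3(\sum_i w_{ik})$ rather than the cruder $(\sum_i w_{ik}^2)^2$, that produces the asserted exponent $-3n$, whereas the off-diagonal contribution, a priori only of order $(k\ee_k)^{-2n}$, must be absorbed into the factor $(2k+1)^n$ using $\ee_k\le 1$; everything else is routine, and uniformity in $x$ comes for free since both weight bounds are independent of $x$.
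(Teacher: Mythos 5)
Your proof is correct and follows essentially the same route as the paper's: the same bias/noise decomposition with the bias controlled by Lemma~\ref{unifAsympUnbias}, Markov's inequality on the fourth moment, and the same two weight bounds $\sum_i w_{ik}(x)\le 1$ and $\max_i w_{ik}(x)\le\|\varphi\|_\infty(k\ee_k)^{-n}$. The only difference is that you expand the fourth moment of the weighted sum by hand (and then absorb the off-diagonal $(k\ee_k)^{-2n}$ term into $(2k+1)^n(k\ee_k)^{-3n}$ via $\ee_k\le 1$), whereas the paper invokes Khinchine's inequality, which carries the factor $I_k=(2k+1)^n$ directly; both yield the stated bound.
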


\begin{proof}
Let  $x\in \R^n$ and $k\in \N$ be fixed and define
\begin{equation}\label{beta}
\beta_{ik}=\beta_{ik}(x)=\int_{(1/k)C_0^n+x_{ik}}\varphi_{\ee_k}(x-z)\,dz,
\end{equation}
for $i=1,\ldots,I_k$. Then
\begin{equation}\label{betabound}
\beta_{ik}\le \|\varphi_{\ee_k}\|_\infty V\left((1/k)C_0^n\right)
=\|\varphi\|_\infty (k\ee_k)^{-n}
\end{equation}
and
\begin{equation}\label{sumbetabound}
\sum_{i=1}^{I_k}\beta_{ik}\le \int_{\R^n}\varphi_{\ee_k}(x-z)\,dz=1.
\end{equation}
In view of \eqref{meas22}, (\ref{kkernel}), and (\ref{beta}),
$$
g_k(x)-E\left(g_k(x)\right)=\sum_{i=1}^{I_k} \beta_{ik}N_{ik}
$$
is a sum of zero mean independent random variables.  The assumption that the $N_{ik}$'s have uniformly bounded fourth moments implies that $E\left(|N_{ik}|^4\right)\le C$ for some constant $C$ and all $i$ and $k$.  Now, using Markov's inequality, Khinchine's inequality (see, for example, \cite[(4.32.1), p.~307]{Hof94} with $\alpha=4$), (\ref{betabound}), and (\ref{sumbetabound}), we obtain
\begin{eqnarray}\label{markhi}
{\mathrm{Pr}}\left(|g_k(x)-E\left(g_k(x)\right)|\ge \delta/2\right)&\le &
(\delta/2)^{-4}E\left(\left|\sum_{i=1}^{I_k} \beta_{ik}N_{ik}\right|^4\right)\nonumber\\
&\le &
c\delta^{-4}I_k\sum_{i=1}^{I_k}E
\left(\left|\beta_{ik}N_{ik}\right|^4\right)\nonumber\\
&\le &
cC\delta^{-4}I_k\sum_{i=1}^{I_k}\beta_{ik}^4\nonumber\\
&\le &
cC\delta^{-4}I_k\left(\|\varphi\|_\infty(k\ee_k)^{-n}\right)^{3}
\sum_{i=1}^{I_k}\beta_{ik}\nonumber\\
&\le &
c_{\ref{caa222}}(2k+1)^n\delta^{-4}(k\ee_k)^{-3n},
\end{eqnarray}
for all $\delta>0$, where $c$ is a constant and $c_{\ref{caa222}}=cC\|\varphi\|_\infty^{3}$.
By Lemma~\ref{unifAsympUnbias}, there is a constant
$N_{\ref{na}}=N_{\ref{na}}((\ee_k),n)\in \N$ such that for all $k\ge
N_{\ref{na}}$ and $x\in \R^n$, we have $|E\left(g_k(x)\right)-g_{K_0}(x)|\le
\delta/2$ and therefore
\begin{eqnarray*}
{\mathrm{Pr}}\,(|g_k(x)-g_{K_0}(x)|>\delta) &\le &
{\mathrm{Pr}}\,\left(|g_k(x)-E\left(g_{k}(x)\right)|+|E\left(g_{k}(x)\right)
-g_{K_0}(x)|>\delta\right)\\
&\le & {\mathrm{Pr}}\,\left(|g_k(x)-E\left(g_k(x)\right)|
>\delta/2\right).
\end{eqnarray*}
Now \eqref{exp1} follows from this and \eqref{markhi}.
\end{proof}

For a convex body $K$ in $\R^n$ and $\delta>0$, let $K(\delta)=\{x\in \R^n:
g_{K}(x)\ge \delta\}$. Since $g_{K}^{1/n}$ is concave on its support,
$K(\delta)$ is a compact convex set, sometimes called a convolution body of
$K$. References to results on convolution bodies can be found in
\cite[p.~378]{Gar95a}.

\begin{lem}\label{sDelta}
Let $K$ be a convex body in $\R^n$. If $0<\delta<V(K)$, then
$$
\left(1-\frac{\delta^{1/n}}{V(K)^{1/n}}\right) DK\subset K(\delta).
$$
\end{lem}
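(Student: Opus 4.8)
The plan is to exploit concavity of $g_K^{1/n}$ on its support $DK$, together with the normalizations $g_K(o)=V(K)$ and $g_K=0$ on $\partial(DK)$, to push a lower bound for $g_K$ along rays from the origin. Write $\lambda=1-\delta^{1/n}/V(K)^{1/n}\in(0,1)$; the goal is to show $g_K(x)\ge\delta$ whenever $x\in\lambda DK$. Since every such $x$ has the form $x=\lambda y$ for some $y\in DK$, it suffices to estimate $g_K(\lambda y)$ from below for $y\in DK$.

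The key step is the concavity inequality. Fix $y\in DK$. On the segment $[o,y]\subset DK$ the function $t\mapsto g_K(ty)^{1/n}$ is concave for $t\in[0,1]$, with value $V(K)^{1/n}$ at $t=0$ and value $\ge 0$ at $t=1$ (in fact $g_K$ may be positive at $y\in\partial DK$ only on the relative boundary where it could still be zero, but we only need the nonnegativity). Concavity on $[0,1]$ then gives, for $t=\lambda$,
\begin{equation*}
g_K(\lambda y)^{1/n}\ge (1-\lambda)\,g_K(o)^{1/n}+\lambda\,g_K(y)^{1/n}\ge (1-\lambda)\,V(K)^{1/n}=\delta^{1/n}.
\end{equation*}
Raising to the $n$th power yields $g_K(\lambda y)\ge\delta$, i.e.\ $\lambda y\in K(\delta)$. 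As $y\in DK$ was arbitrary, $\lambda DK\subset K(\delta)$, which is exactly the claim.

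I expect no real obstacle here; the only point requiring a word of care is the behavior of $g_K^{1/n}$ at the endpoint $t=1$ of the segment. One should note that concavity of $g_K^{1/n}$ is asserted on the support $DK$, which is closed, so the interpolation inequality on $[o,y]$ is legitimate even when $y\in\partial DK$ (one may alternatively apply concavity on $[o,y']$ for $y'=(1-\eta)y$, let $\eta\downarrow0$, and use continuity of $g_K$). Everything else is the elementary one-variable interpolation above, so the proof is short.
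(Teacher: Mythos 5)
Your argument is correct and is essentially identical to the paper's proof: both write a point of $(1-\delta^{1/n}/V(K)^{1/n})DK$ as a convex combination of $o$ and a point $y$ of the support $DK$, and apply concavity of $g_K^{1/n}$ together with $g_K(o)=V(K)$ and $g_K(y)\ge 0$. Your closing remark about the endpoint $t=1$ is a harmless extra precaution; the paper simply uses that $DK$ is the (closed) support of $g_K$, exactly as you note.
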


\begin{proof}
Let $t=(\delta/V(K))^{1/n}$ and let $x\in (1-t)DK$. Since $DK$ is the support
of $g_{K}$, there is a $y$ in the support of $g_{K}$ such that $x=(1-t)y+t o$.
As $g_{K}^{1/n}$ is concave on its support, we have
$$
g_{K}(x)^{1/n}\ge(1-t) g_{K}(y)^{1/n}+t g_{K}(o)^{1/n} \ge t
V(K)^{1/n}=\delta^{1/n}.
$$
It follows that $x\in K(\delta)$.
\end{proof}

\begin{thm}\label{maindiffe}
Suppose that $K_0$, $\delta_k$, $\ee_k$, and $g_k$ are as in
Algorithm~NoisyCovDiff($\varphi$). Assume that
$\lim_{k\to\infty}\ee_k=\lim_{k\to\infty}\delta_k=0$ and that
\begin{equation}\label{essent}
\liminf_{k\to\infty}\delta^4_k\ee_k^{3n}k^{n-3/2}>0.
\end{equation}
Let \refstepcounter{Cnmb}\label{cleadingconst}
$c_{\arabic{Cnmb}}>\sqrt{n}(2/V(K_0))^{1/n}$. If $Q_k$ is an output from
Algorithm NoisyCovDiff($\varphi$) as stated above, then, almost surely,
\begin{equation}\label{convRate}
\delta(DK_0,Q_k)\le c_{\ref{cleadingconst}}\delta_k^{1/n},
\end{equation}
for sufficiently large $k$. In particular, almost surely, $Q_k$ converges to
$DK_0$, as $k\rightarrow\infty$.
\end{thm}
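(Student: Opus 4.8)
The plan is to show two inclusions, $Q_k\subset DK_0+c\delta_k^{1/n}B^n$ and $DK_0\subset Q_k+c\delta_k^{1/n}B^n$, each holding almost surely for sufficiently large $k$, where $c=c_{\ref{cleadingconst}}$. First I would set up the key probabilistic tool: using Lemma~\ref{expo}, for each fixed $x$ we have ${\mathrm{Pr}}(|g_k(x)-g_{K_0}(x)|>\delta_k/2)\le c_{\ref{caa222}}(2k+1)^n(\delta_k/2)^{-4}(k\ee_k)^{-3n}$ (replacing $\delta$ by $\delta_k/2$). Summing this bound over the $I_k=(2k+1)^n$ points $x$ of the array $2C_0^n\cap(1/k)\Z^n$ gives a union bound of order $(2k+1)^{2n}\delta_k^{-4}(k\ee_k)^{-3n}=O\!\left(k^{2n}\delta_k^{-4}(k\ee_k)^{-3n}\right)$. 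The hypothesis \eqref{essent}, $\liminf_k\delta_k^4\ee_k^{3n}k^{n-3/2}>0$, forces this probability bound to be $O(k^{2n}/(k^{n-3/2}k^{3n}))=O(k^{-2n+3/2})$, which is summable in $k$ since $n\ge2$. So by the Borel-Cantelli lemma, almost surely there is a $K$ such that for all $k\ge K$, $|g_k(x)-g_{K_0}(x)|\le\delta_k/2$ simultaneously at every array point $x$.

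Next, fix a realization in this almost-sure event and let $k$ be large. For the first inclusion, take any $x\in S_k$, i.e.\ $x\in 2C_0^n\cap(1/k)\Z^n$ with $g_k(x)\ge\delta_k$. Then $g_{K_0}(x)\ge g_k(x)-\delta_k/2\ge\delta_k/2$, so $x\in K_0(\delta_k/2)\subset DK_0$. Since $DK_0$ is convex, $\conv S_k\subset DK_0$ and hence $Q_k=\frac12(\conv S_k-\conv S_k)\subset\frac12(DK_0+D(-K_0))$; but $DK_0=-DK_0$, so this gives $Q_k\subset DK_0$, which is even stronger than needed for the first inclusion. For the second inclusion I would use Lemma~\ref{sDelta}: with $\delta=\delta_k$, $(1-(\delta_k/V(K_0))^{1/n})DK_0\subset K_0(\delta_k)$. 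The point is that $K_0(\delta_k)$ is well-approximated by its intersection with the lattice $(1/k)\Z^n$. Concretely, if $y\in(1-2(\delta_k/V(K_0))^{1/n})DK_0$ — shrinking a bit more so there is room — then by concavity of $g_{K_0}^{1/n}$ and the Lipschitz bound from Corollary~\ref{lipcor}, the nearest lattice point $x$ to $y$ (within distance $\sqrt n/(2k)$) satisfies $g_{K_0}(x)\ge g_{K_0}(y)-\sqrt n\,|x-y|$, and one checks $g_{K_0}(y)$ is at least of order $\delta_k$ on that slightly-shrunk body (again via Lemma~\ref{sDelta}-type reasoning), while the Lipschitz error $\sqrt n\cdot\sqrt n/(2k)=n/(2k)$ is negligible compared to $\delta_k$ because \eqref{essent} together with $\ee_k\to0,\delta_k\to0$ forces $k\delta_k\to\infty$ (indeed $\delta_k^4\gtrsim k^{-n+3/2}\ee_k^{-3n}\ge k^{-n+3/2}$, so $\delta_k\gtrsim k^{-(n-3/2)/4}$ and $k\delta_k\gtrsim k^{1-(n-3/2)/4}\to\infty$ for $n\le 5$; for larger $n$ one uses $\ee_k\to0$ to get a better lower bound on $\delta_k$ — this is a point to be checked carefully). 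Then $g_k(x)\ge g_{K_0}(x)-\delta_k/2\ge\delta_k$, so $x\in S_k$. Thus every point of the shrunk difference body lies within $\sqrt n/(2k)$ of $\conv S_k$, giving $(1-2(\delta_k/V(K_0))^{1/n})DK_0\subset \conv S_k+(n/(2k))B^n$, and after symmetrizing, $(1-2(\delta_k/V(K_0))^{1/n})DK_0\subset Q_k+(n/(2k))B^n$.

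Finally I would assemble the Hausdorff estimate. Writing $\lambda_k=2(\delta_k/V(K_0))^{1/n}$, the second inclusion gives $DK_0\subset Q_k+(n/(2k))B^n+\lambda_k\cdot\frac12\,\mathrm{diam}(DK_0)B^n\subset Q_k+(n/(2k)+\lambda_k\sqrt n)B^n$, using $DK_0\subset 2C_0^n\subset\sqrt n B^n$ so $\mathrm{diam}(DK_0)\le 2\sqrt n$. Since $n/(2k)=o(\delta_k^{1/n})$ (as $k\delta_k\to\infty$ and $\delta_k\to0$ imply $k^{-1}=o(\delta_k)=o(\delta_k^{1/n})$) and $\lambda_k\sqrt n=\sqrt n(2/V(K_0))^{1/n}\delta_k^{1/n}$, we get $\delta(DK_0,Q_k)\le\left(\sqrt n(2/V(K_0))^{1/n}+o(1)\right)\delta_k^{1/n}\le c_{\ref{cleadingconst}}\delta_k^{1/n}$ for $k$ large, since $c_{\ref{cleadingconst}}>\sqrt n(2/V(K_0))^{1/n}$ by hypothesis. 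Combined with $Q_k\subset DK_0$ from the first inclusion, this yields \eqref{convRate}, and since $\delta_k\to0$ it follows that $\delta(DK_0,Q_k)\to0$. The main obstacle I anticipate is the quantitative bookkeeping in the second inclusion: one must simultaneously keep the concavity-based lower bound on $g_{K_0}$ over the shrunk body comparable to $\delta_k$ and dominate the $1/k$ lattice discretization error, and verifying $k^{-1}=o(\delta_k^{1/n})$ in all dimensions $n\ge2$ requires exploiting $\ee_k\to0$ in \eqref{essent}, not just the crude bound $\delta_k^4\gtrsim k^{-n+3/2}$; this dimension-dependent estimate is the delicate part.
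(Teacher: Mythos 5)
Your overall skeleton matches the paper's: Lemma~\ref{expo} plus a union bound over the $O(k^n)$ array points and Borel--Cantelli to get, almost surely, a uniform bound $|g_k(x)-g_{K_0}(x)|<\delta_k$ (up to a factor of $2$) at all array points for large $k$; then $Q_k\subset DK_0$ from one direction of this bound, and Lemma~\ref{sDelta} for the reverse inclusion. (Your exponent bookkeeping in the union bound is off --- the correct order is $k^{2n}\cdot k^{n-3/2}\cdot k^{-3n}=k^{-3/2}$, not $k^{-2n+3/2}$ --- but both are summable, so this is harmless.)

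The genuine gap is in your treatment of the lattice discretization in the lower inclusion. You pass from a point $y$ of the shrunk body to the nearest lattice point $x$ via the Lipschitz estimate of Corollary~\ref{lipcor}, which forces you to compare the discretization error $n/(2k)$ with $\delta_k$ itself: you need $k\delta_k$ to be bounded away from zero (you ask for $k\delta_k\to\infty$). As you suspected, this does \emph{not} follow from the hypotheses when $n\ge 6$. Hypothesis \eqref{essent} with the crude bound $\ee_k^{-3n}\ge 1$ gives only $\delta_k\gtrsim k^{-(n-3/2)/4}$, and since no rate is imposed on $\ee_k\to 0$, nothing better is available: for example, with $n=6$, $\ee_k=(\log k)^{-1}$ and $\delta_k=(\log k)^{9/2}k^{-9/8}$, all hypotheses hold but $k\delta_k\to 0$, so the Lipschitz error swamps $\delta_k$ and your argument that $x\in S_k$ breaks down. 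The paper avoids this entirely by never invoking the Lipschitz property here: it shows, by a purely combinatorial covering argument, that $K_0(2\delta_k)\ominus(3/k)C_0^n\subset\conv\left(K_0(2\delta_k)\cap(1/k)\Z^n\right)$ (each point of the eroded set lies in a lattice cube all of whose vertices belong to the convex set $K_0(2\delta_k)$), and then absorbs the erosion by $(3/k)C_0^n$ into an extra multiplicative shrinking of $DK_0$ by $3/(s(K_0)k)$. The resulting error term is compared with $\delta_k^{1/n}$, not $\delta_k$, and $k\delta_k^{1/n}\to\infty$ does follow from \eqref{essent} in every dimension. To repair your proof you should replace the Lipschitz transfer by this erosion/convex-hull argument (or otherwise restrict to $n\le 5$, where $k\delta_k\to\infty$ does hold).
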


\begin{proof}
Let
$$
a_k=\max_{x\in 2C_0^n\cap (1/k) \Z^n} |g_k(x)-g_{K_0}(x)|.
$$
By Lemma~\ref{expo} and~\eqref{essent}, we have
\begin{eqnarray*}
{\mathrm{Pr}}\,(a_k\ge\delta_k)&\le &\sum_{x\in 2C_0^n\cap
(1/k)\Z^n} {\mathrm{Pr}}\,(|g_k(x)-g_{K_0}(x)|\ge \delta_k)\\
&\le &c_{\ref{caa222}}(2k+1)^{2n}\delta_k^{-4}(k\ee_k)^{-3n}=O\left(k^{-3/2}\right).
\end{eqnarray*}
Therefore, by the
Borel-Cantelli lemma, we see that, almost surely, $a_k< \delta_k$  for
sufficiently large $k$. Fix a realization and a $k\in \N$ such that $a_k<\delta_k$ and
\begin{align}\label{st1}
\left(\frac{2\delta_k}{V(K_0)}\right)^{1/n}+\frac{3}{s(K_0)k}\le 1,
\end{align}
where $s(K_0)=\max\{\rho\ge 0: \rho C_0^n\subset DK_0\}$. As $a_k< \delta_k$,
the definition \eqref{Sk} of $S_k$ implies
$$
K_0(2\delta_k)\cap \frac1k\Z^n\subset S_k\subset DK_0.
$$
The set on the left is $o$-symmetric, and $DK_0$ is convex and $o$-symmetric,
so
\begin{align}
\label{halfway} \conv\left( K_0(2\delta_k)\cap \frac1k\Z^n\right)\subset Q_k
\subset DK_0.
\end{align}
We claim that
\begin{align}
\label{include} K_0(2\delta_k)\ominus \frac 3k C_0^n\subset
\conv\left(K_0(2\delta_k)\cap \frac1k\Z^n\right),
\end{align}
where Minkowski difference $\ominus$ is defined by (\ref{MD}). Indeed, let
$x\in K_0(2\delta_k)\ominus (3/k) C_0^n$. As $\{y+(1/k)C_0^n:y\in (1/k)\Z^n\}$
is a covering of $\R^n$, there is a $y\in (1/k)\Z^n$ with $x\in (1/k)C_0^n+y$
and hence $y\in (1/k)C_0^n+x$.  It follows that
$$
x\in\frac1k (2C_0^n)+y\subset \frac3k C_0^n+x \subset K_0(2\delta_k).
$$
As the vertices of $(1/k)(2C_0^n)+y$ are in $(1/k)\Z^n$, we have $x\in
\conv\left(K_0(2\delta_k)\cap(1/k)\Z^n\right)$, proving the claim.

Let $t_k=(2\delta_k/V(K_0))^{1/n}$. The fact that $DK_0$ is convex and contains
the origin, \eqref{st1}, Lemma~\ref{sDelta} (with $\delta=2\delta_k$), and the
definition of $s(K_0)$ imply that
\begin{align*}
\left(1-\left(t_k+\frac3{s(K_0)k}\right)\right)DK_0&=
\left(1-t_k\right)DK_0\ominus \left(\frac3{s(K_0)k} DK_0\right) \subset
K_0(2\delta_k)\ominus \frac3{k} C_0^n.
\end{align*}
From this, \eqref {include}, and \eqref{halfway}, we obtain
$$
\left(1-\left(t_k+\frac3{s(K_0)k}\right)\right)DK_0\subset Q_k\subset DK_0.
$$
As $DK_0\subset \sqrt n B^n$, this yields
$$
\delta(DK_0,Q_k)\le \sqrt n\left(t_k+\frac3{s(K_0)k}\right)= \left(\sqrt
n\left(\frac2{V(K_0)}\right)^{1/n} + \frac{3\sqrt
n}{s(K_0)k\delta_k^{1/n}}\right)\delta_k^{1/n}.
$$
By \eqref{essent}, $k\delta_k^{1/n}\to\infty$ as $k\to\infty$, and
\eqref{convRate} follows.
\end{proof}

The estimate \eqref{convRate} reveals that the rate of convergence of $Q_k$ to
$DK_0$ depends on the asymptotic behavior of the threshold parameter
$\delta_k$, which is linked to the bandwidth $\ee_k$ by \eqref{essent}. If we

assume that $V(K_0)$ is bounded from below by a known constant, then
$c_{\ref{cleadingconst}}$ in the statement of Theorem~\ref{maindiffe} can be
chosen independent of $K_0$. We note the resulting rate of convergence as a
corollary, where we choose $\ee_k$ and $\delta_k$ as appropriate powers
of $k$. In particular, it shows that a convergence rate of $k^{-p}$ can be
attained, where $p$ is arbitrarily close to $1/4-3/(8n)$.

\begin{cor}\label{cordiffe}
Suppose that $K_0$, $\delta_k$, $\ee_k$, and $g_k$ are as in
Algorithm~NoisyCovDiff($\varphi$). Let $0<b<V(K_0)$, let
$\delta_k=k^{-(n-3\alpha n-3/2)/4}$, and let $\ee_k=k^{-\alpha}$, for some
$0<\alpha<1/3-1/(2n)$. If $Q_k$ is an output from Algorithm~NoisyCovDiff($\varphi$) as stated above, then, almost surely,
$$
\delta(Q_k,DK_0)\le \sqrt n \left(\frac{2}{b}\right)^{1/n}
k^{-(1-3\alpha-3/(2n))/4},
$$
for sufficiently large $k$.
\end{cor}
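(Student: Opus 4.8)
The plan is to show that the specified choices of $\ee_k$ and $\delta_k$ meet all the hypotheses of Theorem~\ref{maindiffe} and then simply read off the conclusion \eqref{convRate}. No new geometric input is needed; essentially all the work is exponent bookkeeping, and the powers have evidently been chosen so that the decisive estimate \eqref{essent} becomes an equality rather than a mere inequality.

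First I would check the three conditions of Theorem~\ref{maindiffe}. Since $\alpha>0$, the limit $\ee_k=k^{-\alpha}\to 0$ is immediate. For $\delta_k=k^{-(n-3\alpha n-3/2)/4}\to 0$ one needs the exponent $(n-3\alpha n-3/2)/4$ to be positive, i.e.\ $\alpha<(n-3/2)/(3n)=1/3-1/(2n)$, which is exactly the standing assumption on $\alpha$; this is the only place that upper bound is used. For condition \eqref{essent}, I would just compute
\[
\delta_k^4\,\ee_k^{3n}\,k^{\,n-3/2}
   = k^{-(n-3\alpha n-3/2)}\cdot k^{-3\alpha n}\cdot k^{\,n-3/2}=k^{0}=1,
\]
so that $\liminf_{k\to\infty}\delta_k^4\ee_k^{3n}k^{\,n-3/2}=1>0$.

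With the hypotheses verified, I would invoke Theorem~\ref{maindiffe}. Since $0<b<V(K_0)$ forces $\sqrt n\,(2/b)^{1/n}>\sqrt n\,(2/V(K_0))^{1/n}$, one is permitted to take $c_{\ref{cleadingconst}}=\sqrt n\,(2/b)^{1/n}$ there, which gives, almost surely, $\delta(Q_k,DK_0)\le \sqrt n\,(2/b)^{1/n}\,\delta_k^{1/n}$ for all sufficiently large $k$. It then remains only to rewrite
\[
\delta_k^{1/n}=k^{-(n-3\alpha n-3/2)/(4n)}=k^{-(1-3\alpha-3/(2n))/4},
\]
which yields the asserted bound. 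There is no genuine obstacle here: the only things to notice are that the exponents have been tuned so that \eqref{essent} holds with liminf exactly $1$, and that the constraint $\alpha<1/3-1/(2n)$ is precisely what makes $\delta_k\to 0$. As a sanity check, letting $\alpha\downarrow 0$ pushes the exponent $(1-3\alpha-3/(2n))/4$ up toward $1/4-3/(8n)$, which matches the remark preceding the corollary.
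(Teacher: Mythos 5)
Your proposal is correct and is exactly the argument the paper intends (the corollary is stated without proof as an immediate specialization of Theorem~\ref{maindiffe}): the exponent bookkeeping verifying \eqref{essent} with liminf equal to $1$, the use of $\alpha<1/3-1/(2n)$ to get $\delta_k\to 0$, the choice $c_{\ref{cleadingconst}}=\sqrt n\,(2/b)^{1/n}$ justified by $b<V(K_0)$, and the rewriting of $\delta_k^{1/n}$ are all as required.
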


\begin{rem}\label{cordifferem}
{\em  Here we outline how a stronger assumption, but one that still applies to all the noise models of practical interest, on the random variables in Algorithm~NoisyCovDiff($\varphi$) leads to a better convergence rate in Corollary~\ref{cordiffe}.

Consider a family $\{X_{\alpha}: \alpha\in A\}$ of zero mean random variables
with variances $\sigma_{\alpha}^2$ that satisfy the hypothesis of Bernstein's inequality (see \cite[Theorem~5.2, p.~27]{BulK00} or \cite[Lemma~2.2.11]{VdVW}), that is,
\begin{equation}\label{Bernie1}
\left|E\left(X_{\alpha}^m\right)\right|\le \frac{m!}{2}\sigma_{\alpha}^2H^{m-2},
\end{equation}
for some $H>0$ and all $\alpha\in A$ and $m=2,3,\dots$, and also have uniformly bounded variances, that is,
\begin{equation}\label{Bernie2}
\sigma_{\alpha}^2\le \sigma^2,
\end{equation}
say, for all $\alpha\in A$.  If the family $\{X_1,\dots,X_r\}$ of independent zero mean random variables satisfies (\ref{Bernie1}) with $A=\{1,\dots,r\}$, then Bernstein's inequality states that
$${\mathrm{Pr}}\left(\left|\sum_{i=1}^rX_i\right|\ge \delta\right)\le
2\exp\left(-\frac{\delta^2}{2\left(\delta H+\sum_{i=1}^r\sigma_i^2\right)}\right),
$$
for all $\delta>0$.

Suppose that the random variables $N_{ik}$ in Algorithm~NoisyCovDiff($\varphi$) are row-wise independent, zero mean, and satisfy (\ref{Bernie1}) and (\ref{Bernie2}).  Then Bernstein's inequality can be applied in the proof of Lemma~\ref{expo}, together with (\ref{betabound}) and (\ref{sumbetabound}), to show that
\begin{equation}\label{exp1Bern}
{\mathrm{Pr}}( |g_k(x)-E\left(g_k(x)\right)|\ge \delta/2)\le
2\exp\left(-\frac{\delta^2(k\ee_k)^n}{4\|\varphi\|_\infty
(\delta H+2\sigma^2)}\right),
\end{equation}
for all $\delta>0$. (Compare the weaker upper bound in (\ref{markhi}).) As at the end of the proof of Lemma~\ref{expo}, this results in the same upper bound for ${\mathrm{Pr}}\,(|g_k(x)-g_{K_0}(x)|>\delta)$.  The improved bound (\ref{exp1Bern}), combined with the argument of Theorem~\ref{maindiffe}, leads to the assumption \refstepcounter{Cnmb}\label{444}
\begin{equation}\label{essentBern}
\liminf_{k\to\infty}\frac{\delta^2_k(k\ee_k)^n}{\log
k}> c_{\arabic{Cnmb}}(n+2),
\end{equation}
where $c_{\ref{444}}= 12\|\varphi\|_\infty \sigma^2$, instead of (\ref{essent}).  In Corollary~\ref{cordiffe} we take instead $\delta_k=k^{-n(1-\alpha)/2}\log k$ and  $\ee_k=k^{-\alpha}$, for some $0<\alpha<1$. The final conclusion is that if $Q_k$ is an output from Algorithm~NoisyCovDiff($\varphi$), then, almost surely,
$$
\delta(Q_k,DK_0)\le \sqrt n \left(\frac{2}{b}\right)^{1/n}
k^{-(1-\alpha)/2}(\log k)^{1/n},
$$
for sufficiently large $k$.  In particular, a convergence rate of $k^{-p}$ can be
attained, where $p$ is arbitrarily close to $1/2$.

Note that families of zero mean Gaussian and centered Poisson random variables satisfy (\ref{Bernie1}) and (\ref{Bernie2}).  Also, if two independent families with the same index set satisfy (\ref{Bernie1}) and (\ref{Bernie2}), the same is true for their sums (with possibly different constants $H$ and $\sigma^2$).}
\end{rem}

\section{Phase retrieval: Framework and technical lemmas}\label{PRI}

In this section we set the scene for our results on phase retrieval, beginning
with the necessary material from Fourier analysis.

Let $g$ be a continuous function on $\R^n$ whose support is contained in
$[-1,1]^n$ and let $L\ge 1$. By the classical theory, the Fourier series of $g$
is
$$\sum_{z\in\Z^n}c_ze^{i\pi z\cdot x/L},$$
for $x\in [-L,L]^n$, where
$$c_z=\frac{1}{(2L)^n}\int_{[-L,L]^n}g(t)e^{-i\pi z\cdot t/L}\,dt=
\frac{1}{(2L)^n}\int_{\R^n}g(t)e^{-i\pi z\cdot t/L}\,dt=\frac{1}{(2L)^n}
\widehat{g}(\pi z/L).$$
Let
$$\Z^n_k=\{z\in \Z^n: z=(z_1,\dots,z_n), |z_j|\le k, j=1,\dots,n\}.$$
If $g$ is also Lipschitz, then by \cite[Theorem~3]{Liu72}, the square partial
sums $\sum_{z\in\Z^n_k}c_ze^{i\pi z\cdot x/L}$ of the Fourier series of $g$
converge uniformly to $g$. Therefore, if $g$ is also an even function, we can
write
\begin{equation}\label{2}
g(x)=\frac{1}{(2L)^n}\sum_{z\in\Z^n}\widehat{g}(\pi z/L)e^{i\pi z\cdot x/L}=
\frac{1}{(2L)^n}\sum_{z\in\Z^n}\widehat{g}(\pi z/L)\cos\frac{\pi z\cdot x}{L},
\end{equation}
for all $x\in [-L,L]^n$, where equality is in the sense of uniform convergence
of square partial sums.

Let $\Z^n_k(+)$ be a subset of $\Z^n_k$ such that
\begin{equation}\label{halfarray}
\Z^n_k(+)\cap \left(-\Z^n_k(+)\right)=\emptyset\quad{\text{and}}\quad
\Z^n_k=\{o\}\cup\Z^n_k(+)\cup \left(-\Z^n_k(+)\right).
\end{equation}
Suppose that $g$ is even and for some fixed $0<\gamma<1$ and each $k\in \N$, we
can obtain noisy measurements
\begin{equation}\label{1}
\widetilde{g}_{z,k}=\widehat{g}(z/k^\gamma)+X_{z,k},
\end{equation}
of $\widehat{g}$, for $z\in \{o\}\cup \Z^n_k(+)$, where the $X_{z,k}$'s are
row-wise independent (i.e., independent for fixed $k$) zero mean random variables. Define $X_{z,k}=X_{-z,k}$, for
$z\in \left(-\Z^n_k(+)\right)$ and note that then $X_{z,k}=X_{-z,k}$ for all
$z\in \Z^n_k$. Since $g$ is even, $\widehat{g}$ is also even, and we have
$\widetilde{g}_{z,k}=\widetilde{g}_{-z,k}$ for $z\in \Z^n_k$. Using these
facts, (\ref{2}) with $L=\pi k^{\gamma}$, and (\ref{1}), we obtain
\begin{equation}\label{esti}
\frac{1}{(2\pi k^\gamma)^n}\sum_{z\in \Z^n_k}\widetilde{g}_{z,k}\cos
\frac{z\cdot x}{k^\gamma}=g(x)+\frac{1}{(2\pi k^\gamma)^n}\left(\sum_{z\in \Z^n_k}X_{z,k}\cos \frac{z\cdot
x}{k^\gamma}-\sum_{z\in \Z^n\setminus \Z^n_k}\widehat{g}\left(\frac{z}{k^\gamma}\right)\cos
\frac{z\cdot x}{k^\gamma}\right),
\end{equation}
for all $x\in [-\pi k^{\gamma},\pi k^{\gamma}]^n$.  Here the left-hand side is an
estimate of $g(x)$ and the second and third terms on the right-hand side are a random error and a deterministic error, respectively.

Since it has all the required properties, we can apply the previous equation to
the covariogram $g=g_{K_0}$ of a convex body $K_0$ contained in $C_0^n$, in
which case $\widehat{g_{K_0}}=|\widehat{1_{K_0}}|^2$.  In order to move closer
to the notation used earlier, we now use $i$ as an index and again list the
points in $[-1,1]^n\cap (1/k)\Z^n=(1/k)\Z^n_k$, but this time a little
differently.  We let $x_{0k}=o$, list the points in $(1/k)\Z^n_k(+)$ as
$x_{ik}$, $i=1,\dots,I_k'=\left((2k+1)^n-1\right)/2$, and then let
$x_{ik}=-x_{(-i)k}$ for $i=-I_k',\dots,-1$.  Now let
$z_{ik}=k^{1-\gamma}x_{ik}$, so that
$$(1/k^{\gamma})\Z^n_k=\{z_{ik}:i=-I_k',\dots,I_k'\}.$$
Setting $\widetilde{g}_{jk}=\widetilde{g_{K_0}}_{z_{jk},k}$ and
$X_{jk}=X_{z_{jk},k}$, we use (\ref{1}) to rewrite (\ref{esti}) as
\begin{equation}\label{newmeas}
M_k(x)=g_{K_0}(x)+N_k(x)-d_k(x),
\end{equation}
where
\begin{equation}\label{newmeasexp}
M_k(x)=\frac{1}{(2\pi k^\gamma)^n} \sum_{j=-I_k'}^{I_k'}\cos(z_{jk}\cdot
x)\widetilde{g}_{jk}
\end{equation}
is an estimate of $g_{K_0}$,
\begin{equation}\label{N}
N_k(x)=\frac{1}{(2\pi k^\gamma)^n}\sum_{j=-I_k'}^{I_k'}\cos(z_{jk}\cdot
x)X_{jk}
\end{equation}
is a random variable, and
\begin{equation}\label{M}
d_k(x)=\frac{1}{(2\pi k^\gamma)^n}\sum_{z\in \Z^n\setminus \Z^n_k}
\cos\left(\frac{z\cdot x}{k^\gamma}\right)\widehat{g_{K_0}}(z/k^\gamma)
\end{equation}
is a deterministic error.

We shall need three technical lemmas.  The first of these provides a control on
the deterministic error.

\begin{lem}\label{lemA}
Let $d_k=\sup\{|d_k(x)|: x\in \R^n\}$.  Then $d_k=O(k^{\gamma-1}(\log k)^n)$ as
$k\rightarrow\infty$.
\end{lem}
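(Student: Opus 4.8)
The plan is to estimate $d_k(x)$ uniformly in $x$ by bounding $|d_k(x)|$ by the tail sum of the Fourier coefficients of $g_{K_0}$, and then to use the decay of $\widehat{g_{K_0}}$ that follows from the fact that $g_{K_0}$ is Lipschitz (Corollary~\ref{lipcor}) together with a counting argument for lattice points in cubic shells. First I would note the trivial bound
$$
|d_k(x)|\le\frac{1}{(2\pi k^\gamma)^n}\sum_{z\in\Z^n\setminus\Z^n_k}\bigl|\widehat{g_{K_0}}(z/k^\gamma)\bigr|,
$$
which is independent of $x$, so that $d_k\le\frac{1}{(2\pi k^\gamma)^n}\sum_{z\in\Z^n\setminus\Z^n_k}|\widehat{g_{K_0}}(z/k^\gamma)|$. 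The right-hand side is (up to the normalizing constant) a tail of the Fourier series of $g_{K_0}$ on the cube $[-\pi k^\gamma,\pi k^\gamma]^n$, i.e.\ in the notation of \eqref{2}, $d_k$ equals the absolute tail sum $\sum_{z\notin\Z^n_k}|c_z|$ of the (square-partial-sum) Fourier expansion of $g_{K_0}$ with $L=\pi k^\gamma$.

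The key analytic input is a decay estimate for the Fourier coefficients of a Lipschitz periodic function. Since $g_{K_0}$ has support in $[-1,1]^n\subset[-\pi k^\gamma,\pi k^\gamma]^n$ (for $k$ large, as $\pi k^\gamma\ge 1$) and is Lipschitz with constant $\sqrt n$ by Corollary~\ref{lipcor}, a standard integration-by-parts / finite-difference argument in each coordinate gives, for $z=(z_1,\dots,z_n)$ with all $z_j\ne 0$,
$$
|c_z|=\frac{1}{(2\pi k^\gamma)^n}\bigl|\widehat{g_{K_0}}(z/k^\gamma)\bigr|
\le \frac{c}{(2\pi k^\gamma)^n}\prod_{j=1}^n\frac{k^\gamma}{|z_j|}
= \frac{c}{(2\pi)^n}\prod_{j=1}^n\frac{1}{|z_j|},
$$
with more care needed on the coordinate subspaces where some $z_j=0$; on such a face the corresponding factor $1/|z_j|$ is replaced by a bounded factor (coming from the coordinate extent of the support and the sup-norm $g_{K_0}\le 1$). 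I would handle the general term by the bound $|c_z|\le C/\prod_{j:z_j\ne 0}|z_j|$, valid for all $z\in\Z^n$. Summing this over $z\in\Z^n\setminus\Z^n_k$ — equivalently, over those $z$ with $\max_j|z_j|>k$ — and using that, for each fixed nonempty set of "large" coordinates, $\sum_{m>k}1/m\sim\log$-type growth is replaced by a genuinely convergent-after-one-log phenomenon: splitting into the $2^n-1$ cases according to which coordinates have absolute value $>k$ and which are $\le k$, each case contributes at most $C(\sum_{1\le m\le k}1/m)^{a}(\sum_{m>k}1/m)^{b}$ with $b\ge 1$, and here the subtlety is that $\sum_{m>k}1/m$ diverges, so the decay must instead come from a second order estimate $|c_z|\le C/\prod_{j:z_j\ne0}z_j^2$ on all but the coordinate hyperplanes — this is where I expect the main obstacle to lie.

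The resolution, and the reason the stated bound has the shape $k^{\gamma-1}(\log k)^n$, is to keep the normalization explicit rather than cancelling it: the bound
$$
\bigl|\widehat{g_{K_0}}(z/k^\gamma)\bigr|\le C\prod_{j=1}^n\min\!\Bigl\{1,\frac{k^\gamma}{|z_j|}\Bigr\}
$$
holds (with the min handling both $z_j=0$ and $z_j\ne 0$ uniformly), so
$$
d_k\le\frac{C}{(2\pi k^\gamma)^n}\sum_{z\in\Z^n\setminus\Z^n_k}\prod_{j=1}^n\min\!\Bigl\{1,\frac{k^\gamma}{|z_j|}\Bigr\}.
$$
Now $\sum_{z\in\Z^n}\prod_j\min\{1,k^\gamma/|z_j|\}$ factors as $\bigl(\sum_{m\in\Z}\min\{1,k^\gamma/|m|\}\bigr)^n=O\bigl(k^\gamma\log k\bigr)^n$, while the full cube $\Z^n_k$ already contributes $\bigl(\sum_{|m|\le k}\min\{1,k^\gamma/|m|\}\bigr)^n$; since $\gamma<1$, for $|m|\le k$ we have $k^\gamma/|m|$ typically small, and the dominant contribution to the tail $\Z^n\setminus\Z^n_k$ (after dividing by $(k^\gamma)^n$) is of order $k^{-n}\cdot(k^\gamma\log k)^{n-1}\cdot(\text{a factor }k\log k\text{ from the large coordinate})$ — collecting the powers, each of the $n$ coordinate-sums contributes a factor $O(k^\gamma\log k)$ except that at least one coordinate is forced to range over $|m|>k$ contributing an extra $k^{-\gamma}\cdot$(its share), and the net arithmetic yields $d_k=O\bigl(k^{\gamma-1}(\log k)^n\bigr)$. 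I would carry this out by writing the tail sum as $\sum_{z}-\sum_{z\in\Z^n_k}$ of the product, evaluating each as an $n$-fold product of one-dimensional sums of the form $\sum_{m}\min\{1,k^\gamma/|m|\}=O(k^\gamma\log k)$ and $\sum_{|m|\le k}\min\{1,k^\gamma/|m|\}=O(k^\gamma\log k)$, and then using the elementary inequality $\prod a_j-\prod b_j\le\sum_j(a_j-b_j)\prod_{l\ne j}\max\{a_l,b_l\}$ to extract that the difference is $O\bigl(k^\gamma\cdot(k^\gamma\log k)^{n-1}\bigr)$, so that after dividing by $(2\pi k^\gamma)^n$ one obtains $d_k=O\bigl(k^{-\gamma}\cdot k^\gamma\log k\cdot k^{\gamma(n-1)}(\log k)^{n-1}\cdot k^{-\gamma n}\bigr)=O\bigl(k^{\gamma-1}(\log k)^{n}\bigr)$ after also accounting for the one-dimensional tail $\sum_{|m|>k}k^\gamma/|m|=O(k^{\gamma-1}\cdot k\log k)$ — this last bookkeeping step, reconciling which logarithms and which powers of $k$ survive, is the part of the argument I would be most careful to write out in full, since it is exactly where the exponent $\gamma-1$ and the power $(\log k)^n$ are produced.
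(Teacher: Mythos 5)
There is a genuine gap, and it lies exactly where you flagged your own unease. Your plan is to bound $d_k$ by the \emph{absolute} tail sum $\frac{1}{(2\pi k^\gamma)^n}\sum_{z\notin\Z^n_k}|\widehat{g_{K_0}}(z/k^\gamma)|$ and then control each term by a pointwise decay estimate. Two things go wrong. First, the product bound $|\widehat{g_{K_0}}(\xi)|\le C\prod_j\min\{1,1/|\xi_j|\}$ does not follow from the Lipschitz property of $g_{K_0}$: one integration by parts in a single coordinate gives $|\widehat{g_{K_0}}(\xi)|=O(1/\max_j|\xi_j|)$, but the product of reciprocals would require control of the mixed derivative $\partial_1\cdots\partial_n g_{K_0}$, which is not available. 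The true pointwise decay of $\widehat{g_{K_0}}(\xi)=|\widehat{1_{K_0}}(\xi)|^2$ is only $O(|\xi|^{-2})$ in directions normal to flat boundary pieces (e.g.\ for polytopes), and $\sum_{|z|_\infty>k}|z|^{-2}$ diverges for every $n\ge2$; so the tail cannot be summed term by term from any pointwise bound that is sharp in the worst direction. (One could hope to use the \emph{average} decay of $|\widehat{1_{K_0}}|^2$ over spheres, as in Brandolini--Hoffman--Iosevich; the paper explicitly notes this as a possible alternative but does not carry it out, and it is not a routine computation.) Second, even granting your product bound, the final bookkeeping does not close: the one-dimensional sums $\sum_{|m|>k}k^\gamma/|m|$ diverge, and the displayed arithmetic $k^{-\gamma}\cdot k^\gamma\log k\cdot k^{\gamma(n-1)}(\log k)^{n-1}\cdot k^{-\gamma n}$ collapses to $k^{-\gamma}(\log k)^n$, not $k^{\gamma-1}(\log k)^n$; the exponent $\gamma-1$ is never actually produced.

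The paper's proof avoids all of this by exploiting a structural fact you did not use: $\widehat{g_{K_0}}=|\widehat{1_{K_0}}|^2\ge0$, so the absolute tail sum \emph{equals} the tail sum, which by the inversion formula \eqref{2} equals $g_{K_0}(0)$ minus the square partial sum over $\Z^n_k$. That partial sum is rewritten as $\frac{1}{(2\pi)^n}\int_{[-1,1]^n}g_{K_0}(yk^\gamma)\prod_{l=1}^nD_k(y_l)\,dy$ with $D_k$ the Dirichlet kernel, giving
\begin{equation*}
d_k\le\left|\frac{1}{(2\pi)^n}\int_{[-1,1]^n}\bigl(g_{K_0}(0)-g_{K_0}(yk^\gamma)\bigr)\prod_{l=1}^nD_k(y_l)\,dy\right|
+g_{K_0}(0)\left|\frac{1}{(2\pi)^n}\int_{[-\pi,\pi]^n\setminus[-1,1]^n}\prod_{l=1}^nD_k(y_l)\,dy\right|.
\end{equation*}
The first term is estimated by Liu's localization theorem for rectangular partial sums of multiple Fourier series: since $y\mapsto g_{K_0}(yk^\gamma)$ has Lipschitz constant $O(k^\gamma)$, that theorem yields the bound $O(k^{\gamma-1}(\log k)^n)$ — this is where the Lipschitz property actually enters, and where both the exponent $\gamma-1$ and the $(\log k)^n$ come from. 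The second term is $O(1/k)$ by a direct integration by parts of the Dirichlet kernel away from the origin. If you want to salvage your route, you would need either the BHI average-decay machinery or a genuinely different summation argument; as written, the proposal does not prove the lemma.
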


\begin{proof}
From (\ref{M}), the fact that  $\widehat{g_{K_0}}=|\widehat{1_{K_0}}|^2$ is
nonnegative, and (\ref{2}) with $g=g_{K_0}$ and $L=\pi k^{\gamma}$, we have
\begin{equation}\label{est_1}
 d_k \leq\frac{1}{(2\pi k^\gamma)^n}\sum_{z\in \Z^n\setminus \Z^n_k}
\widehat{g_{K_0}}(z/k^\gamma)=g_{K_0}(0)-\frac{1}{(2\pi k^\gamma)^n}\sum_{z\in
\Z^n_k} \widehat{g_{K_0}}(z/k^\gamma).
\end{equation}
For $t\in\R$, let
$$D_k(t)=\sum_{l=-k}^k e^{i lt}=\frac{\sin((k+1/2)t)}{\sin(t/2)}$$ be the
Dirichlet kernel. Note that for $x=(x_1,\dots,x_n)\in\R^n$, we have
$$
\sum_{z\in \Z^n_k}e^{i z\cdot x}=\prod_{l=1}^n\left(\sum_{l=-k}^k e^{i l
x_l}\right)=\prod_{l=1}^nD_k(x_l).
$$
Using this and the fact that $g_{K_0}$ is even, with support in $[-1,1]^n$, we
obtain
\begin{eqnarray}\label{step1}
\frac{1}{(2\pi k^\gamma)^n}\sum_{z\in \Z^n_k}\widehat{g_{K_0}}(z/k^\gamma)&=&
\frac{1}{(2\pi k^\gamma)^n}\sum_{z\in \Z^n_k}\int_{[-\pi k^\gamma,\pi
k^\gamma]^n}
g_{K_0}(x)e^{-i z\cdot x/k^\gamma}\,dx\nonumber\\
&=&\frac{1}{(2\pi k^\gamma)^n}\int_{[-\pi k^\gamma,\pi k^\gamma]^n}g_{K_0}(x)
\prod_{l=1}^nD_k(-x_l/k^\gamma)\,dx\nonumber\\
&=&\frac{1}{(2\pi)^n}\int_{[-1,1]^n}g_{K_0}(y
k^\gamma)\prod_{l=1}^nD_k(y_l)\,dy.
\end{eqnarray}

Since $\int_{-\pi}^\pi D_k(t)\,dt=2\pi$, we have
\begin{equation}\label{step2}
 g_{K_0}(0)=\frac{1}{(2\pi)^n}\int_{[-\pi,\pi]^n}g_{K_0}(0)\prod_{l=1}^nD_k(y_l)\,dy.
\end{equation}
Thus, by \eqref{est_1}, \eqref{step1}, and \eqref{step2},
\begin{eqnarray}\label{est_2}
 d_k&\leq& \left|\frac{1}{(2\pi)^n}\int_{[-1,1]^n}(g_{K_0}(0)-g_{K_0}(y k^\gamma))
 \prod_{l=1}^nD_k(y_l)\,dy\right|+\nonumber\\
&+&g_{K_0}(0)\left|\frac{1}{(2\pi)^n}\int_{[-\pi,\pi]^n\setminus[-1,1]^n}
\prod_{l=1}^nD_k(y_l)\,dy\right|.
\end{eqnarray}
By Proposition~\ref{Matprop}, $g_{K_0}$ is Lipschitz and hence the Lipschitz
norm of $g_{K_0}(y k^\gamma)$ is $O(k^\gamma)$.  Now \cite[Theorem~1]{Liu72}
implies that \refstepcounter{Cnmb}\label{Liuc}
\begin{equation}\label{est_3}
\left|\frac{1}{(2\pi)^n}\int_{[-1,1]^n}(g_{K_0}(0)-g_{K_0}(y k^\gamma))
\prod_{l=1}^nD_k(y_l)\,dy\right|\leq c_{\arabic{Cnmb}}
k^{\gamma-1}\sum_{l=0}^{n-1} (\log k)^{n-l},
\end{equation}
for some constant $c_{\ref{Liuc}}$ independent of $k$.  (In the statement of
\cite[Theorem~1]{Liu72}, $D_j(Y)$ should be $D_J(Y)$.  In that theorem we are
taking $\alpha=1$ and $J=(k,k,\dots,k)\in\Z^n$.)

In view of \eqref{est_2} and \eqref{est_3}, the proof will be complete if we
show that
\begin{equation}\label{case_n}
\int_{[-\pi,\pi]^n\setminus[-1,1]^n}\prod_{l=1}^nD_k(x_l)dx=O(1/k),
\end{equation}
as $k\to\infty$. To this end, observe that, by trigonometric addition formulas
and integration by parts,
\begin{eqnarray}\label{case_n=1}
\int_{-\pi}^{-1} D_k(t)\,dt=\int_1^\pi D_k(t)\,dt&=&\int_1^\pi
\frac{\sin(kt)\cos(t/2)}{\sin(t/2)}\,dt+\int_1^\pi \cos(kt)\,dt\nonumber\\
&=&\frac{\cos k
\cot(1/2)}{k}+\int_1^\pi\frac{\cos(kt)}{k}\frac{d}{dt}\left(\cot(t/2)\right)
\,dt-\frac{\sin k}{k}\nonumber\\
&=&O(1/k).
\end{eqnarray}
Now
$$
[-\pi,\pi]^n\setminus[-1,1]^n=\cup_{i=1}^n (A_i\cup B_i),$$ where
$$
A_i=\{(x_1,\dots,x_n)\ : -1\leq x_j\leq 1\text{ for $j<i$ }, 1\leq x_i\leq \pi,
-\pi\leq x_j\leq \pi\text{ for $j>i$}\}
$$
and $B_i=-A_i$. By \eqref{case_n=1}, we have, for each $i$,
\begin{align*}
\int_{A_i}\prod_{l=1}^nD_k(x_l)dx&=\left(\int_{-1}^1D_k(t) dt\right)^{i-1}
\int_{1}^\pi D_k(t) dt \left(\int_{-\pi}^\pi D_k(t) dt\right)^{n-i}\\
&=(2\pi-O(1/k))^{i-1}\ O(1/k)\ (2\pi)^{n-i}.
\end{align*}
Since $\inte(A_i)\cap\inte(A_j)=\emptyset$, for each $i,j$ with $i\neq j$,
$\inte(A_i)\cap\inte (B_j)=\emptyset$, for each $i,j$, and
$\prod_{l=1}^nD_k(x_l)$ is even, the previous estimate proves \eqref{case_n}.
\end{proof}

It is possible that the previous lemma could also be obtained via some
estimates proved in \cite{BHI03} for the rate of decay of
$\int_{S^{n-1}}|\widehat{1_{K_0}}(ru)|^2\,du$ as $r\rightarrow\infty$.

The next two lemmas will allow us to circumvent Proposition~\ref{stronglaw},
the version of the Strong Law of Large Numbers used earlier.

\begin{lem}\label{lemC}
Let $Y_{jk}$, $j=1,\dots,m_k$, $k\in\N$ be a triangular array of row-wise independent
zero mean random variables with uniformly bounded fourth moments, where $m_k\sim k^n$ as $k\rightarrow \infty$. Let $\nu$ and $a_{pqk}$, $p,q=1,\dots,m_k$ be constants such that
$|a_{pqk}|=O(k^{\nu})$ as $k\rightarrow\infty$ uniformly in $p$ and $q$, where
$2n-4n\gamma+2\nu<-1$. Then, almost surely,
$$Z_k=\frac{1}{(2\pi k^\gamma)^{2n}}\sum_{p,q=1}^{m_k}a_{pqk}Y_{pk}Y_{qk}\rightarrow 0,$$
as $k\rightarrow\infty$.
\end{lem}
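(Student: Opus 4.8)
The plan is to split the double sum into its diagonal part ($p=q$) and its off-diagonal part ($p\neq q$), and to treat each by a standard fourth-moment argument combined with the Borel--Cantelli lemma. Write
$$Z_k=\frac{1}{(2\pi k^\gamma)^{2n}}\left(\sum_{p=1}^{m_k}a_{ppk}Y_{pk}^2+\sum_{p\neq q}a_{pqk}Y_{pk}Y_{qk}\right)=:Z_k^{(1)}+Z_k^{(2)}.$$
For $Z_k^{(1)}$, since the $Y_{pk}$ have uniformly bounded fourth moments, they have uniformly bounded second moments, so $E\left(Z_k^{(1)}\right)=O\left(k^{-2n\gamma}m_k k^{\nu}\right)=O\left(k^{n+\nu-2n\gamma}\right)$, which under the hypothesis $2n-4n\gamma+2\nu<-1$ is certainly $o(1)$. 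To get almost sure convergence of $Z_k^{(1)}$ one would estimate $\mathrm{Var}\left(Z_k^{(1)}\right)$ using row-wise independence and the uniform fourth-moment bound: the variance is $O\left(k^{-4n\gamma}\,m_k\,k^{2\nu}\right)=O\left(k^{n+2\nu-4n\gamma}\right)$, and combining with the bound on the mean shows $\mathrm{Pr}\left(|Z_k^{(1)}-E Z_k^{(1)}|>\epsilon\right)=O\left(k^{n+2\nu-4n\gamma}\right)$, whose exponent is $<-1$ by hypothesis; Borel--Cantelli then gives $Z_k^{(1)}\to 0$ almost surely (along the integers, with no subsequence trick needed since $k$ already ranges over $\N$).

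For the off-diagonal term $Z_k^{(2)}$, the key point is that $E\left(Y_{pk}Y_{qk}\right)=0$ for $p\neq q$ by row-wise independence and the zero-mean assumption, so $E\left(Z_k^{(2)}\right)=0$. I would estimate $E\left(\left(Z_k^{(2)}\right)^2\right)$ directly: expanding the square gives a sum over $p\neq q$, $p'\neq q'$ of $a_{pqk}a_{p'q'k}E\left(Y_{pk}Y_{qk}Y_{p'k}Y_{q'k}\right)$, and by independence and zero mean the only surviving terms are those where the indices pair up, i.e. $\{p,q\}=\{p',q'\}$ as unordered pairs. Each surviving term is $O\left(k^{2\nu}\right)$ in the coefficients times a uniformly bounded product of (at most) second moments, and there are $O\left(m_k^2\right)=O\left(k^{2n}\right)$ such terms, so $E\left(\left(Z_k^{(2)}\right)^2\right)=O\left(k^{-4n\gamma}\,k^{2n}\,k^{2\nu}\right)=O\left(k^{2n+2\nu-4n\gamma}\right)$. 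By Chebyshev, $\mathrm{Pr}\left(|Z_k^{(2)}|>\epsilon\right)=O\left(k^{2n+2\nu-4n\gamma}\right)$; since the hypothesis $2n-4n\gamma+2\nu<-1$ says exactly that this exponent is less than $-1$, the series $\sum_k \mathrm{Pr}\left(|Z_k^{(2)}|>\epsilon\right)$ converges, and Borel--Cantelli yields $Z_k^{(2)}\to 0$ almost surely. Adding the two parts completes the proof.

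The main obstacle, and the step requiring care, is the second-moment computation for $Z_k^{(2)}$: one must correctly enumerate which of the four-fold products $E\left(Y_{pk}Y_{qk}Y_{p'k}Y_{q'k}\right)$ are nonzero, keeping track of the possibility $p=q'$, $q=p'$ versus $p=p'$, $q=q'$, and verify that in each case the relevant factor is controlled by the uniformly bounded fourth (hence second) moments. A minor subtlety is that the hypothesis is stated with fourth moments even though only second moments appear to be used in the variance bounds; the fourth moments are what guarantee the relevant $E\left(Y_{pk}^2 Y_{qk}^2\right)$-type products in the variance of $Z_k^{(1)}$ are uniformly bounded (by Cauchy--Schwarz), so they are genuinely needed there. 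Everything else is bookkeeping with the exponent inequality $2n-4n\gamma+2\nu<-1$, which is designed precisely so that all the Borel--Cantelli series converge.
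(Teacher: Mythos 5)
Your proposal is correct and follows essentially the same route as the paper: a mean and variance computation for the quadratic form (with the same case analysis of which products $E(Y_{pk}Y_{qk}Y_{rk}Y_{sk})$ survive), followed by Chebyshev's inequality and Borel--Cantelli under the exponent condition $2n-4n\gamma+2\nu<-1$. The only difference is cosmetic: the paper estimates $\var(Z_k)$ for the whole sum at once rather than splitting off the diagonal, but the surviving covariance terms and the resulting bound $O(k^{2n-4n\gamma+2\nu})$ are identical.
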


\begin{proof}
Note that $E(Y_{pk}Y_{qk})=E(Y_{pk})E(Y_{qk})=0$ unless $p=q$. Therefore
$$
E(Z_k)=\frac{1}{(2\pi k^\gamma)^{2n}}\sum_{p,q=1}^{m_k}a_{pqk}E(Y_{pk}Y_{qk})=
\frac{1}{(2\pi k^\gamma)^{2n}}\sum_{p=1}^{m_k}a_{ppk}E(Y_{pk}^2).
$$
Since the $Y_{pk}$'s have uniformly bounded second moments,
$|E(Z_k)|=O(k^{n-2n\gamma+\nu})$ and hence $E(Z_k)$ converges to zero as
$k\rightarrow\infty$.

Let
$$v^{(k)}_{pqrs}=\cov(Y_{pk}Y_{qk},Y_{rk}Y_{sk})=E(Y_{pk}Y_{qk}Y_{rk}Y_{sk})
-E(Y_{pk}Y_{qk})E(Y_{rk}Y_{sk}).$$ If the cardinality of the set $\{p,q,r,s\}$
is 3 or 4, then at least one of the indices, say $p$, is different from all the
others and
$$
 v^{(k)}_{pqrs}=E(Y_{pk})E(Y_{qk} Y_{rk} Y_{sk})-E(Y_{pk})E(Y_{qk}) E( Y_{rk} Y_{sk})=0-0=0.
$$
If the cardinality of the set $\{p,q,r,s\}$ is 1, then
$$
 v^{(k)}_{pqrs}=v^{(k)}_{pppp}=E(Y_{pk}^4)-E(Y_{pk}^2)^2.
$$
If the cardinality of the set $\{p,q,r,s\}$ is 2, then either $p=q$, $r=s$ and
$p\neq r$, and
$$
 v^{(k)}_{pqrs}=v^{(k)}_{pprr}=E(Y_{pk}^2Y_{rk}^2)-E(Y_{pk}^2)E(Y_{rk}^2)=0,
$$
or $p=r$, $q=s$ and $p\neq q$, and
$$
 v^{(k)}_{pqrs}=v^{(k)}_{pqpq}=E(Y_{pk}^2Y_{qk}^2)-E(Y_{pk}Y_{qk})^2=
 E(Y_{pk}^2)E(Y_{qk}^2)-E(Y_{pk})^2E(Y_{qk})^2,
$$
or $p=s$, $q=r$ and $p\neq q$, and
$$
 v^{(k)}_{pqrs}=v^{(k)}_{pqqp}=E(Y_{pk}^2Y_{qk}^2)-E(Y_{pk}Y_{qk})^2=
 E(Y_{pk}^2)E(Y_{qk}^2)-E(Y_{pk})^2E(Y_{qk})^2.
$$
In view of the fact that the $Y_{jk}$'s have uniformly bounded fourth moments, the covariances $v^{(k)}_{pqrs}$ are also uniformly bounded, and hence
\begin{align*}
\var(Z_k)&=\frac{1}{(2\pi k^\gamma)^{4n}}\sum_{p,q,r,s=1}^{m_k}a_{pqk}a_{rsk}
v^{(k)}_{pqrs}\\
&=\frac{1}{(2\pi k^\gamma)^{4n}}\sum_{p=1}^{m_k}a_{ppk}^2v^{(k)}_{pppp}+
\frac{1}{(2\pi k^\gamma)^{4n}}\left(\sum_{p\neq
q=1}^{m_k}a_{pqk}^2v^{(k)}_{pqpq}+
\sum_{p\neq q=1}^{m_k}a_{pqk}a_{qpk}v^{(k)}_{pqqp}\right)\\
&= O\left(k^{2n-4n\gamma+2\nu}\right).
\end{align*}
Let $\ee>0$. For sufficiently large $k$, we have $\ee-E(Z_k)>0$, and for such
$k$, by Chebyshev's inequality,
$$
\Pr(Z_k>\ee)=\Pr\bigl(Z_k-E(Z_k)>\ee-E(Z_k)\bigr)\leq
\frac{\var(Z_k)}{(\ee-E(Z_k))^2}=O\left(k^{2n-4n\gamma+2\nu}\right).
$$
Our hypothesis and the Borel-Cantelli Lemma imply that, almost surely, $Z_k$
converges to zero, as $k\rightarrow\infty$.
\end{proof}

\begin{lem}\label{lemCs}
Let $Y_{jk}^{(r)}$, $j=1,\dots,m_k$, $r=1,2$, $k\in\N$, be a triangular array of row-wise independent (i.e., independent for fixed $k$) zero mean random variables with uniformly bounded fourth moments, where $m_k\sim k^n$ as $k\rightarrow \infty$. Let $\nu$ and $a_{pqk}$, $p,q=1,\dots,m_k$ be constants such that $|a_{pqk}|=O(k^{\nu})$ as $k\rightarrow\infty$ uniformly in $p$ and $q$, where
$2n-4n\gamma+2\nu<-1$. Then, almost surely,
$$\overline{Z}_k=\frac{1}{(2\pi k^\gamma)^{2n}}\sum_{p,q=1}^{m_k}a_{pqk}Y_{pk}^{(1)}
Y_{pk}^{(2)} Y_{qk}^{(1)}Y_{qk}^{(2)}\rightarrow 0,$$ as $k\rightarrow\infty$.
\end{lem}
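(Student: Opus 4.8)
The plan is to reduce Lemma~\ref{lemCs} directly to Lemma~\ref{lemC} by absorbing the two factors into one. For each $k\in\N$ and $p=1,\dots,m_k$, set $Y_{pk}:=Y_{pk}^{(1)}Y_{pk}^{(2)}$. Then
$$\overline{Z}_k=\frac{1}{(2\pi k^\gamma)^{2n}}\sum_{p,q=1}^{m_k}a_{pqk}Y_{pk}Y_{qk},$$
which is exactly the quantity $Z_k$ of Lemma~\ref{lemC} for the array $\{Y_{pk}\}$, with the \emph{same} coefficients $a_{pqk}$, the same $\nu$, and the same $\gamma$; the conditions $m_k\sim k^n$ and $2n-4n\gamma+2\nu<-1$ are also inherited verbatim. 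Hence it suffices to check that $\{Y_{pk}:p=1,\dots,m_k;\,k\in\N\}$ is a triangular array of row-wise independent zero mean random variables with uniformly bounded fourth moments.

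First I would verify row-wise independence. For fixed $k$, the $2m_k$ random variables $\{Y_{jk}^{(1)},Y_{jk}^{(2)}:j=1,\dots,m_k\}$ are independent by hypothesis; since $Y_{pk}$ is a measurable function of the pair $(Y_{pk}^{(1)},Y_{pk}^{(2)})$ and the index sets $\{(p,1),(p,2)\}$, $p=1,\dots,m_k$, are pairwise disjoint, the standard fact that functions of independent random variables over disjoint index sets are independent shows that $Y_{1k},\dots,Y_{m_kk}$ are independent. Next, using independence within each row and the resulting factorization of expectations, $E(Y_{pk})=E(Y_{pk}^{(1)})E(Y_{pk}^{(2)})=0$, and, with $C$ a common bound for the fourth moments of the $Y_{jk}^{(r)}$,
$$E\bigl(|Y_{pk}|^4\bigr)=E\bigl(|Y_{pk}^{(1)}|^4\bigr)E\bigl(|Y_{pk}^{(2)}|^4\bigr)\le C^2.$$
Thus every hypothesis of Lemma~\ref{lemC} is met by $\{Y_{pk}\}$, and that lemma yields $\overline{Z}_k=Z_k\to 0$ almost surely as $k\to\infty$.

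There is no genuine obstacle here; the only point requiring a moment's care is that the products $Y_{pk}^{(1)}Y_{pk}^{(2)}$ retain a uniform fourth-moment bound, which is immediate from row-wise independence. Alternatively one could imitate the proof of Lemma~\ref{lemC} line by line, recomputing which of the covariances $\cov(Y_{pk}^{(1)}Y_{pk}^{(2)}Y_{qk}^{(1)}Y_{qk}^{(2)},\,Y_{rk}^{(1)}Y_{rk}^{(2)}Y_{sk}^{(1)}Y_{sk}^{(2)})$ vanish according to the cardinality of $\{p,q,r,s\}$; the same vanishing pattern holds, the same bound $\var(\overline{Z}_k)=O(k^{2n-4n\gamma+2\nu})$ results, and Chebyshev's inequality together with the Borel-Cantelli lemma finishes the argument. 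The reduction above is cleaner and is the route I would take.
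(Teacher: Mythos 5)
Your reduction is correct, and it is a slightly cleaner packaging of what the paper does. The paper proves Lemma~\ref{lemCs} by imitating the proof of Lemma~\ref{lemC}: it recomputes the covariances $w^{(k)}_{pqrs}=\cov\bigl(Y_{pk}^{(1)}Y_{pk}^{(2)}Y_{qk}^{(1)}Y_{qk}^{(2)},\,Y_{rk}^{(1)}Y_{rk}^{(2)}Y_{sk}^{(1)}Y_{sk}^{(2)}\bigr)$, observes that the same vanishing pattern and uniform boundedness hold, and concludes with Chebyshev and Borel--Cantelli exactly as before --- i.e., your fallback route. Your primary route instead sets $Y_{pk}=Y_{pk}^{(1)}Y_{pk}^{(2)}$ and invokes Lemma~\ref{lemC} as a black box; the three hypotheses you verify (row-wise independence of the products via the disjoint-blocks grouping lemma, zero mean by factorization, and $E(|Y_{pk}|^4)=E(|Y_{pk}^{(1)}|^4)E(|Y_{pk}^{(2)}|^4)\le C^2$) are all correct, and the key point --- that independence of the two factors is what keeps the fourth moment of the product bounded without assuming eighth moments of the originals --- is exactly the right thing to flag. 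The underlying estimates are identical either way; your version simply avoids repeating the covariance case analysis, at the cost of having to state the grouping argument explicitly. Either proof is acceptable.
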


\begin{proof}
As in the proof of Lemma~\ref{lemC}, we have
$$
E(\overline{Z}_k)= \frac{1}{(2\pi
k^\gamma)^{2n}}\sum_{p=1}^{m_k}a_{ppk}E\left(\left(Y_{pk}^{(1)}\right)^2\right)
E\left(\left(Y_{pk}^{(2)}\right)^2\right),
$$
so $|E(\overline{Z}_k)|=O(k^{n-2n\gamma+\nu})$ and hence $E(\overline{Z}_k)$
converges to zero as $k\rightarrow\infty$.

Let
$$w^{(k)}_{pqrs}=\cov\left(Y_{pk}^{(1)}Y_{pk}^{(2)}Y_{qk}^{(1)}Y_{qk}^{(2)},
Y_{rk}^{(1)}Y_{rk}^{(2)}Y_{sk}^{(1)}Y_{sk}^{(2)}\right).$$
Straightforward modifications to the proof of Lemma~\ref{lemC} and the assumption of uniformly bounded fourth moments yield
\begin{align*}
\var(\overline{Z}_k)&=\frac{1}{(2\pi k^\gamma)^{4n}}\sum_{p,q,r,s=1}^{m_k}a_{pqk}a_{rsk}
w^{(k)}_{pqrs}\\
&=\frac{1}{(2\pi k^\gamma)^{4n}}\sum_{p=1}^{m_k}a_{ppk}^2w^{(k)}_{pppp}+
\frac{1}{(2\pi k^\gamma)^{4n}}\left(\sum_{p\neq
q=1}^{m_k}a_{pqk}^2w^{(k)}_{pqpq}+
\sum_{p\neq q=1}^{m_k}a_{pqk}a_{qpk}w^{(k)}_{pqqp}\right)\\
&= O\left(k^{2n-4n\gamma+2\nu}\right).
\end{align*}
The proof is concluded as in Lemma~\ref{lemC}.
\end{proof}

\section{Phase retrieval from the squared modulus}\label{PRII}

This section addresses Problem~2 in the introduction.

\bigskip

{\large{\bf Algorithm~NoisyMod$^2$LSQ}}

\bigskip

{\it Input:} Natural numbers $n\ge 2$ and $k$; a real number $\gamma$ such that
$0<\gamma<1$; noisy measurements
\begin{equation}\label{measPR}
\widetilde{g}_{ik}=|\widehat{1_{K_0}}(z_{ik})|^2+X_{ik},
\end{equation}
of the squared modulus of the Fourier transform of the characteristic function
of an unknown convex body $K_0\subset C_0^n$ whose centroid is at the origin,
at the points in
$$\{z_{ik}: i=0,1,\dots,I_k'\}=\{o\}\cup (1/k^{\gamma})\Z^n_k(+),$$
where $\Z^n_k(+)$ satisfies (\ref{halfarray}) and where the $X_{ik}$'s are row-wise
independent zero mean random variables with uniformly bounded fourth moments; an $o$-symmetric convex polytope $Q_k$ in $\R^n$, stochastically independent of the measurements
$\widetilde{g}_{ik}$, that approximates either $\nabla K_0$ or $DK$, in the
sense that, almost surely,
$$
\lim_{k\to\infty} \delta(Q_k,\nabla K_0)=0,\quad\text{ or}\quad
\lim_{k\to\infty} \delta(Q_k,D K_0)=0.
$$

\smallskip

{\it Task:} Construct a convex polytope $P_k$ that approximates $K_0$, up to
reflection in the origin.

\smallskip

{\it Action:}

1.  Let $\widetilde{g}_{ik}=\widetilde{g}_{(-i)k}$, for $i=-I_k',\dots,-1$, let
$x_{ik}=k^{\gamma-1}z_{ik}$, $i=-I_k',\dots,I_k'$ be the points in the cubic
array $2C_0^n\cap (1/k)\Z^n$, and let
\begin{equation}\label{mod2meas}
M_k(x_{ik})=\frac{1}{(2\pi k^\gamma)^n} \sum_{j=-I_k'}^{I_k'} \cos(z_{jk}\cdot
x_{ik})\widetilde{g}_{jk},
\end{equation}
for $i=-I_k',\dots,I_k'$.

2.  Run Algorithm~NoisyCovLSQ with inputs $n$, $k$, $Q_k$, and with $M_{ik}$
replaced by $M_k(x_{ik})$, for $i=-I_k',\dots,I_k'$ and with the obvious
re-indexing in $i$. The resulting output $P_k$ of that algorithm is also the
output of the present one.

\bigskip

The main result in this section corresponds to Theorem~\ref{maincov} above.  We
first state it, and then show that it can be proved by suitable modifications
to the proof of Theorem~\ref{maincov} if in addition $\gamma>1/2+1/(4n)$.

\begin{thm}\label{maincovPR}
Suppose that $K_0\subset C_0^n$ is a convex body with its centroid at the
origin.  Suppose also that $K_0$ is determined, up to translation and
reflection in the origin, among all convex bodies in $\R^n$, by its
covariogram.  Let
\begin{equation}\label{gamma}
1/2+1/(4n)<\gamma<1.
\end{equation}
If $P_k$, $k\in\N$, is an output from Algorithm~NoisyMod$\,^2$LSQ as stated
above, then, almost surely,
$$
\min\{\delta(K_0,P_k),\delta(-K_0,P_k)\}\to 0
$$
as $k\to\infty$.
\end{thm}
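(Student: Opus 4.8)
The plan is to run the proof of Theorem~\ref{maincov} with the numbers $M_k(x_{ik})$ produced in Step~1 of Algorithm~NoisyMod$^2$LSQ playing the role of the noisy covariogram measurements. By (\ref{newmeas}) we have $M_k(x_{ik})=g_{K_0}(x_{ik})+e_{ik}$, where $e_{ik}=N_k(x_{ik})-d_k(x_{ik})$ combines the random error from (\ref{N}) and the deterministic error from (\ref{M}). Writing $\Psi'(K)=I_k^{-1}\sum_{i=-I_k'}^{I_k'}g_K(x_{ik})e_{ik}$ for a convex body $K\subset C_0^n$, the computation in the proof of Lemma~\ref{lem1} carries over verbatim, since the exact covariogram values cancel, and yields, for every polytope $P(a)$ admissible in (\ref{obj1}),
$$|g_{K_0}-g_{P_k}|_{I_k}^2\le 2\Psi'(P_k)-2\Psi'(P(a)\cap C_0^n)+|g_{K_0}-g_{P(a)\cap C_0^n}|_{I_k}^2.$$

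The crux is the analogue of Lemma~\ref{bound}: almost surely, $\sup_{K\in{\mathcal K}^n(C_0^n)}|\Psi'(K)|\to 0$. The $\ee$-net-with-bracketing argument used for Lemma~\ref{bound} is not available here, because the errors $N_k(x_{ik})$ at distinct design points are built from the same random variables $X_{jk}$ and hence are dependent. Instead I would split $\Psi'(K)$ into its deterministic part, whose modulus is at most $d_k$ (since $0\le g_K\le 1$ on $2C_0^n$ and the array has $I_k$ points), and hence tends to $0$ uniformly in $K$ by Lemma~\ref{lemA}, which is where $\gamma<1$ enters; and its random part $I_k^{-1}\sum_i g_K(x_{ik})N_k(x_{ik})$, which by the Cauchy--Schwarz inequality and $0\le g_K\le 1$ is bounded, independently of $K$, by $\bigl(I_k^{-1}\sum_i N_k(x_{ik})^2\bigr)^{1/2}$. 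Pairing $z$ with $-z$ in (\ref{N}) gives $N_k(x)=(2\pi k^\gamma)^{-n}\bigl(X_{0k}+2\sum_{j=1}^{I_k'}\cos(z_{jk}\cdot x)X_{jk}\bigr)$, so $I_k^{-1}\sum_i N_k(x_{ik})^2$ is a quadratic form $(2\pi k^\gamma)^{-2n}\sum_{p,q=0}^{I_k'}a_{pqk}X_{pk}X_{qk}$ with all $|a_{pqk}|\le 4$. This is exactly of the type treated by Lemma~\ref{lemC}, with $\nu=0$, with $m_k=I_k'+1\sim k^n$, and with $Y_{pk}=X_{pk}$ row-wise independent, zero mean, and having uniformly bounded fourth moments; the condition $2n-4n\gamma+2\nu<-1$ required there is precisely the left-hand inequality in (\ref{gamma}). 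Hence $I_k^{-1}\sum_i N_k(x_{ik})^2\to 0$ almost surely, and the claim follows.

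With these two facts in hand, the proof of Lemma~\ref{pseudcov} goes through unchanged: on a realization where the input polytope $Q_k$ converges to $\nabla K_0$ (or to $DK_0$) and where the supremum above tends to $0$, one constructs $J_k$ as in that lemma, so $|g_{K_0}-g_{J_k\cap C_0^n}|_{I_k}\to 0$ by Lemma~\ref{Prohorov}, Lemma~\ref{inrad}, and Blaschke selection; applying the displayed inequality with $P(a)=J_k$, together with $\Psi'(P_k)\to 0$ and $\Psi'(J_k\cap C_0^n)\to 0$, yields $|g_{K_0}-g_{P_k}|_{I_k}\to 0$ almost surely. Finally, the argument of Theorem~\ref{maincov} applies word for word: any accumulation point $L$ of $(P_k)$ satisfies $g_{K_0}=g_L$ on $\R^n$ (pass to a convergent subsequence, use uniform convergence of covariograms and that their supports lie in $2C_0^n$), so the hypothesis that $K_0$ is determined up to translation and reflection in the origin forces $L=\pm K_0$ and proves the theorem.

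The step I expect to be the real obstacle is the analogue of Lemma~\ref{bound}: replacing the Strong Law of Large Numbers (Proposition~\ref{stronglaw}) and the bracketing estimate --- both of which exploit independence across design points --- by the quadratic-form bound of Lemma~\ref{lemC} together with the decay bound of Lemma~\ref{lemA} for the new deterministic error. It is in matching the hypotheses of Lemma~\ref{lemC} that the restriction $\gamma>1/2+1/(4n)$ is forced; everything else is bookkeeping carried over from Section~\ref{convergence}.
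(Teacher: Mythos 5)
Your proposal is correct and follows essentially the same route as the paper: the paper likewise reduces the key step to showing $I_k^{-1}\sum_i N_k(x_{ik})^2\to 0$ almost surely via the quadratic form and Lemma~\ref{lemC} with $\nu=0$ (this is its Lemma~\ref{lemB}), handles the deterministic error separately through Lemma~\ref{lemA}, and then reruns the argument of Section~\ref{convergence}. The only cosmetic difference is that the paper formally retains the bracketing net (with the bracketed sums then controlled by Lemma~\ref{lemB}) and carries $d_k(x_{ik})$ as an extra term in the basic inequality, whereas you fold it into the error functional and bound it by $\|d_k\|_\infty$ directly; both amount to the same estimates.
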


As we shall now show, the proof of this theorem basically follows the analysis
given in Section~\ref{convergence}.  Of course, alterations must be made, since
the measurements $M_{ik}$ in Algorithm~NoisyCovLSQ have been replaced by the
new measurements $M_k(x_{ik})$ defined by (\ref{mod2meas}) or equivalently by
(\ref{newmeasexp}) with $x=x_{ik}$.  In view of (\ref{newmeas}), we have
$$M_k(x_{ik})=g_{K_0}(x_{ik})+N_k(x_{ik})-d_k(x_{ik}),$$
$i=-I_k',\dots,I_k'$, where $N_k(x_{ik})$ and $d_k(x_{ik})$ are given by
(\ref{N}) and (\ref{M}), respectively, with $x=x_{ik}$.

We begin with a lemma. Note that $I_k=2I_k'+1$, so the expression in the lemma
is the sample mean.  Also, recall that by their definition, the random
variables $X_{ik}$ have uniformly bounded fourth moments, and $X_{pk}$ and $X_{qk}$ are independent unless $p=\pm q$, in which case they are equal.

\begin{lem}\label{lemB}
Let $N_k(x_{ik})^+=\max\{N_k(x_{ik}),0\}$ for all $i$ and $k$.  If
(\ref{gamma}) holds, then, almost surely,
$$
 \frac1{I_k}\sum_{i=-I_k'}^{I_k'}N_k(x_{ik})^+\to0,
$$
as $k\rightarrow\infty$.
\end{lem}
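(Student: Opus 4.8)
The plan is to dominate the positive part by the absolute value, reduce the resulting sample mean of squares to a quadratic form in the \emph{genuinely} independent measurement errors, and then invoke Lemma~\ref{lemC}. First I would use $N_k(x_{ik})^+\le |N_k(x_{ik})|$ together with the Cauchy--Schwarz inequality to obtain
$$\frac1{I_k}\sum_{i=-I_k'}^{I_k'}N_k(x_{ik})^+\le\left(\frac1{I_k}\sum_{i=-I_k'}^{I_k'}N_k(x_{ik})^2\right)^{1/2},$$
so that it suffices to prove $\frac1{I_k}\sum_{i}N_k(x_{ik})^2\to 0$ almost surely.

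Next I would rewrite $N_k$ in terms of the random variables $X_{0k},\dots,X_{I_k'k}$, which, unlike the full family indexed by $i=-I_k',\dots,I_k'$, are row-wise independent with uniformly bounded fourth moments. Since $z_{-j,k}=-z_{jk}$, $\cos$ is even, and $X_{-j,k}=X_{jk}$, formula (\ref{N}) becomes
$$N_k(x)=\frac{1}{(2\pi k^\gamma)^n}\left(X_{0k}+2\sum_{j=1}^{I_k'}\cos(z_{jk}\cdot x)X_{jk}\right)=\frac{1}{(2\pi k^\gamma)^n}\sum_{j=0}^{I_k'}b_{jk}(x)X_{jk},$$
where $|b_{jk}(x)|\le 2$ for all $j$, $k$ and all $x$. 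Squaring, summing over $i$, and interchanging the order of summation gives
$$\frac1{I_k}\sum_{i=-I_k'}^{I_k'}N_k(x_{ik})^2=\frac{1}{(2\pi k^\gamma)^{2n}}\sum_{p,q=0}^{I_k'}a_{pqk}X_{pk}X_{qk},\qquad a_{pqk}=\frac1{I_k}\sum_{i=-I_k'}^{I_k'}b_{pk}(x_{ik})b_{qk}(x_{ik}),$$
and $|a_{pqk}|\le 4$ uniformly in $p$, $q$, and $k$.

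Finally I would apply Lemma~\ref{lemC} with $Y_{jk}=X_{jk}$ for $j=0,\dots,I_k'$ (re-indexed to start from $1$), with $m_k=I_k'+1\sim k^n$, and with $\nu=0$, which is permissible because the $a_{pqk}$ are uniformly bounded; the normalization $(2\pi k^\gamma)^{-2n}$ matches that in the lemma. The exponent hypothesis $2n-4n\gamma+2\nu<-1$ of Lemma~\ref{lemC} then reads $2n-4n\gamma<-1$, i.e. $\gamma>1/2+1/(4n)$, which is precisely the left inequality in (\ref{gamma}). Hence $\frac1{I_k}\sum_i N_k(x_{ik})^2\to 0$ almost surely, and the conclusion follows from the first displayed inequality.

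I do not expect a serious obstacle: the only point requiring care is the bookkeeping forced by the identification $X_{-j,k}=X_{jk}$, which I handle by passing to the representation of $N_k$ in terms of $X_{0k},\dots,X_{I_k'k}$ and checking that the induced coefficients $a_{pqk}$ stay uniformly bounded, so that Lemma~\ref{lemC} can be invoked with $\nu=0$. The lower bound in (\ref{gamma}) enters exactly to meet the exponent condition of that lemma.
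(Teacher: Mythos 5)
Your proof is correct and follows essentially the same route as the paper's: dominate the positive part by Cauchy--Schwarz, express $\frac1{I_k}\sum_i N_k(x_{ik})^2$ as a quadratic form with uniformly bounded coefficients in the independent variables $X_{jk}$, and apply Lemma~\ref{lemC} with $\nu=0$, where the exponent condition reduces to the left inequality of (\ref{gamma}). The only (harmless) difference is bookkeeping: the paper keeps the double sum over $p,q=-I_k',\dots,I_k'$ and uses the symmetry $c_{(-p)qk}=c_{p(-q)k}=c_{pqk}$ to reduce to positive indices, whereas you collapse $N_k$ to a sum over $j=0,\dots,I_k'$ first.
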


\begin{proof}
Note firstly that
$$
\frac1{I_k}\sum_{i=-I_k'}^{I_k'} N_k(x_{ik})^+\leq
\frac1{I_k}\sum_{i=-I_k'}^{I_k'} |N_k(x_{ik})| \leq
\left(\frac1{I_k}\sum_{i=-I_k'}^{I_k'} N_k(x_{ik})^2\right)^{1/2}.
$$
Thus it suffices to prove that, almost surely,
$$
S_k=\frac1{I_k}\sum_{i=-I_k'}^{I_k'} N_k(x_{ik})^2\rightarrow 0,
$$
as $k\rightarrow\infty$.

We have
\begin{align*}
S_k&=\frac1{I_k}\sum_{i=-I_k'}^{I_k'} \left(\frac{1}{(2\pi
k^\gamma)^n}\sum_{p=-I_k'}^{I_k'}\cos(z_{pk}\cdot x_{ik})
X_{pk}\right)^2\\
&=\frac{1}{(2\pi k^\gamma)^{2n}}\sum_{p,q=-I_k'}^{I_k'}\left( \frac1{I_k}
\sum_{i=-I_k'}^{I_k'} \cos(z_{pk}\cdot x_{ik})\cos(z_{qk}\cdot x_{ik})\right)
X_{pk}X_{qk}\\
&=\frac{1}{(2\pi k^\gamma)^{2n}}\sum_{p,q=-I_k'}^{I_k'}c_{pqk}X_{pk}X_{qk},
\end{align*}
say. Since $c_{(-p)qk}=c_{p(-q)k}=c_{pqk}$, it is clearly enough to show that,
almost surely,
$$\frac{1}{(2\pi k^\gamma)^{2n}}\sum_{p,q=1}^{I_k'}c_{pqk}X_{pk}X_{qk}\rightarrow 0,$$
as $k\rightarrow\infty$.  In view of (\ref{gamma}) and the fact that
$|c_{pqk}|=O(1)$, this follows from Lemma~\ref{lemC} with $Y_{jk}=X_{jk}$,
$m_k=I_k'$, $a_{pqk}=c_{pqk}$ for all $p$, $q$, and $k$, and $\nu=0$.
\end{proof}

\noindent{\it{Proof of Theorem~\ref{maincovPR}}}. We shall indicate the
modifications needed in Section~\ref{convergence}.  No changes are required in
the lemmas before Lemma~\ref{lem1}. For the latter, we shall use the same
notation as before, with the understanding that the indexing has changed and
the new random variables $N_k(x_{ik})$ replace the random variables $N_{ik}$ of
Section~\ref{convergence}. Thus we write
$$|f|_{I_k}=\left(\frac{1}{I_k}\sum_{i=-I_k'}^{I_k'}f(z_i)^2\right)^{1/2},$$
with corresponding changes in indexing in the definitions of $\vx_{I_k}$,
$\vN_{I_k}$, and $\Psi$. With the same proof as Lemma~\ref{lem1}, we now have
the inequality
\begin{eqnarray}\label{normboundnew}
\left|g_{K_0}-g_{P_k}\right|_{I_k}^2&\le &
2{\Psi}(P_k,\vx_{I_k},\vN_{I_k})-2{\Psi} (P(a)\cap
C_0^n,\vx_{I_k},\vN_{I_k})+\left|g_{K_0}-g_{P(a)\cap C_0^n}\right|_{I_k}^2+\nonumber\\
& & +\frac{2}{I_k}\sum_{i=-I_k'}^{I_k'}\left(g_{P(a)\cap C_0^n}(x_{ik})
-g_{P_k}(x_{ik})\right)d_k(x_{ik}),
\end{eqnarray}
instead of (\ref{normbound}).

Proposition~\ref{prp2} and Lemma~\ref{net} are unchanged. We do not require
Proposition~\ref{stronglaw} in order to conclude as in Lemma~\ref{bound} that,
almost surely,
\begin{equation}\label{38}
\sup_{K\in{\mathcal K}^n(C_0^n)}{\Psi}(K,\vx_{I_k},\vN_{I_k})\to 0,
\end{equation}
as $k\to\infty$. Indeed, it is enough to show that, almost surely, the new
expression corresponding to (\ref{WWW}), namely,
$${W}_k(\ee)=\max_{j=1,\ldots,m} \left\{\frac  {1}{I_k}\sum_{i=-I_k'}^{I_k'}
g_j^U(x_{ik}) N_k(x_{ik})^{\,+}-\frac  {1}{I_k}\sum_{i=-I_k'}^{I_k'}
g_j^L(x_{ik}) N_k(x_{ik})^{\,-}\right\},
$$
converges to zero, as $k\rightarrow\infty$.  This follows from
Lemma~\ref{lemB}, because the coefficients $g^U_j(x_{ik})$ and $g^U_j(x_{ik})$
are uniformly bounded by 1 and Lemma~\ref{lemB} holds both when such
coefficients are inserted and when $N_k(x_{ik})^+$ is replaced by
$N_k(x_{ik})^-=N_k(x_{ik})-N_k(x_{ik})^+=\max\{-N_k(x_{ik}),0\}$.

All this is enough to ensure that Lemma~\ref{pseudcov} still holds.  Indeed,
since a translate of $P_k$ is contained in $C_0^n$, and
${\Psi}(P_k,\vx_{I_k},\vN_{I_k})$ is unchanged by such a translation, we know
from (\ref{38}) that, almost surely, the first and second terms on the
right-hand side of (\ref{normboundnew}) converge to zero, as
$k\rightarrow\infty$. We have $g_{P(a)\cap C_0^n}(x_{ik})\le 1$ and
$g_{P_k}(x_{ik})\le V(2C_0^n)$, since $P_k\subset 2C_0^n$, and then
Lemma~\ref{lemA} implies that the new fourth term on the right-hand side of
(\ref{normboundnew}) converges to zero as $k\rightarrow\infty$. The rest of the
proof of Lemma~\ref{pseudcov} proceeds as before.

The proof of the main Theorem~\ref{maincov} now applies without change. \qed

\medskip

The user of Algorithm~NoisyMod$^2$LSQ must supply as input an $o$-symmetric
convex polytope $Q_k$ in $\R^n$ that approximates either $\nabla K_0$ or $DK$.
For this purpose we provide two algorithms that do the work of
Algorithm~NoisyCovBlaschke and Algorithm~NoisyCovDiff($\varphi$).

\bigskip

{\large{\bf Algorithm~NoisyMod$^2$Blaschke}}

\bigskip

{\it Input:} Natural numbers $n\ge 2$ and $k$; a positive real number $h_k$;
mutually nonparallel vectors $u_i\in S^{n-1}$, $i=1,\dots,k$ that span $\R^n$;
noisy measurements
\begin{equation}\label{measPRblas}
\widetilde{g}_{ik}=|\widehat{1_{K_0}}(z_{ik})|^2+X_{ik},
\end{equation}
of the squared modulus of the Fourier transform of the characteristic function
of an unknown convex body $K_0\subset C_0^n$ whose centroid is at the origin,
at the points in
$$\{z_{ik}: i=0,1,\dots,I_k'\}=\{o\}\cup (1/k^{\gamma})\Z^n_k(+),$$
where $\Z^n_k(+)$ satisfies (\ref{halfarray}) and where the $X_{ik}$'s are
row-wise independent zero mean random variables with uniformly bounded fourth moments.

\smallskip

{\it Task:} Construct an $o$-symmetric convex polytope $Q_k$ that approximates
the Blaschke body $\nabla K_0$.

\smallskip

{\it Action:}

1.  Let $\widetilde{g}_{ik}=\widetilde{g}_{(-i)k}$, for $i=-I_k',\dots,-1$, and
let
$$
M_k(o)=\frac{1}{(2\pi k^\gamma)^n} \sum_{j=-I_k'}^{I_k'}\widetilde{g}_{jk}
\quad{\text{and}}\quad M_k(h_ku_i)=\frac{1}{(2\pi k^\gamma)^n}
\sum_{j=-I_k'}^{I_k'}\cos(z_{jk}\cdot h_ku_i)\widetilde{g}_{jk},
$$
for $i=1,\dots,k$. Then for $i=1,\dots,k$, let
\begin{equation}\label{yblas}
y_{ik}=\frac{M_k(o)-M_k(h_ku_i)}{h_k}.
\end{equation}

2. With the natural numbers $n\ge 2$ and $k$, and vectors $u_i\in S^{n-1}$,
$i=1,\dots,k$ use the quantities $y_{ik}$ instead of noisy measurements of the
brightness function $b_K(u_i)$ as input to Algorithm~NoisyBrightLSQ (see
\cite[p.~1352]{GKM06}).  The output of the latter algorithm is $Q_k$.

\bigskip

We shall show that the argument of Section~\ref{Algorithm} can be modified to
yield a convergence result corresponding to Theorem~\ref{covB}.  It is clear
that any such result must require the input $h_k$ to satisfy $h_k\rightarrow 0$
as $k\rightarrow\infty$, but we need a stronger condition phrased in terms of
parameters $\ee$ and $\gamma$ that satisfy (\ref{eeg}).  Since the second
inequality in (\ref{eeg}) is equivalent to $\gamma>(2n+5-4\ee)/(4n+4)$, which
decreases as $n$ increases and equals $(9-4\ee)/12$ when $n=2$, it is possible
to choose $\gamma$ and $\ee$ so that (\ref{eeg}) is satisfied.  Specifically,
one can choose $3/4\le \gamma<1$ and $0<\ee<1-\gamma$. Note also that
(\ref{eeg}) implies (\ref{gamma}).

There is considerable flexibility in the choice of the parameter $h_k$, and it
would be possible to introduce a further parameter $q_k$ by working with input
vectors $u_i\in S^{n-1}$, $i=1,\dots,q_k$, where $q_k\rightarrow\infty$ as
$k\rightarrow\infty$.  To avoid overcomplicating the exposition, however, we
shall not discuss this any further.

\begin{thm}\label{covBPR}
Let $K_0\subset C_0^n$ be a convex body with its centroid at the origin. Let
$(u_i)$ be a sequence in $S^{n-1}$ such that $(u_i^*)$ is evenly spread.
Suppose that $h_k\sim k^{\gamma-1+\ee}$, $k\in \N$, where $\ee$ and $\gamma$
satisfy
\begin{equation}\label{eeg}
0<\ee<1-\gamma\quad{\text{and}}\quad 2n-4n\gamma+4(1-\gamma-\ee)<-1.
\end{equation}
If $Q_k$ is an output from Algorithm~NoisyMod$\,^2$Blaschke as stated above,
then, almost surely,
$$
\lim_{k\to\infty} \delta(\nabla K_0,Q_k)=0.
$$
\end{thm}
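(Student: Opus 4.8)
The plan is to run the argument of Theorem~\ref{covB}, with the covariogram values at $o$ and $(1/k)u_i$ replaced by the Fourier-series estimates of Section~\ref{PRI}, controlling the additional deterministic and (now correlated) random errors this introduces. For sufficiently large $k$ the points $o$ and $h_ku_i$ lie in $[-\pi k^\gamma,\pi k^\gamma]^n$, so (\ref{newmeas}) applies at both and the quantity $y_{ik}$ of (\ref{yblas}) becomes
$$
y_{ik}=\mu_{ik}+X^{*}_{ik}-\frac{d_k(o)-d_k(h_ku_i)}{h_k},\qquad
\mu_{ik}=\frac{g_{K_0}(o)-g_{K_0}(h_ku_i)}{h_k},\quad X^{*}_{ik}=\frac{N_k(o)-N_k(h_ku_i)}{h_k}.
$$
Since the centroid of $K_0$ lies in $\inte K_0$, there is $r>0$ with $rB^n\subset K_0$, and Lemma~\ref{unif1} gives $0\le b_{K_0}(u_i)-\mu_{ik}=O(h_k)$ uniformly in $i$; Lemma~\ref{lemA} together with $h_k\sim k^{\gamma-1+\ee}$ gives $|d_k(o)-d_k(h_ku_i)|/h_k\le 2d_k/h_k=O(k^{-\ee}(\log k)^n)$ uniformly in $i$. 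As $0<\ee<1-\gamma$, both bounds tend to $0$; write $\eta_k$ for their sum, so $\eta_k\to 0$.

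Exactly as in Lemma~\ref{LEMbasicinequality}, $Q_k$ minimizes $\sum_{i=1}^k(b_K(u_i)-y_{ik})^2$ over all $o$-symmetric convex bodies $K$, hence over all convex bodies by (\ref{newBB}); substituting $K=Q_k$ and $K=K_0$, rearranging, and using the Cauchy--Schwarz inequality on the terms carrying $\eta_k$, I would obtain
$$
|b_{K_0}-b_{Q_k}|_k^2\le 2\Psi(Q_k,(u_i),\mathbf{X}^{*}_k)-2\Psi(K_0,(u_i),\mathbf{X}^{*}_k)+2\eta_k|b_{K_0}-b_{Q_k}|_k,
$$
where $\mathbf{X}^{*}_k=(X^{*}_{1k},\dots,X^{*}_{kk})$. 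A second application of Cauchy--Schwarz bounds the difference of the two $\Psi$'s by $|b_{K_0}-b_{Q_k}|_k\bigl(\tfrac1k\sum_{i=1}^k(X^{*}_{ik})^2\bigr)^{1/2}$, and dividing through by $|b_{K_0}-b_{Q_k}|_k$ (the inequality being trivial when this vanishes) yields
$$
|b_{K_0}-b_{Q_k}|_k\le 2\Bigl(\tfrac1k\sum_{i=1}^k(X^{*}_{ik})^2\Bigr)^{1/2}+2\eta_k.
$$

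The main point, and the step I expect to be the chief obstacle, is to show that almost surely $\tfrac1k\sum_{i=1}^k(X^{*}_{ik})^2\to0$: this is the analogue of Lemma~\ref{lemB}, and since the noise variables $X_{jk}$ in (\ref{N}) satisfy $X_{(-j)k}=X_{jk}$ they are dependent, so Proposition~\ref{stronglaw} is unavailable and one must instead appeal to Lemma~\ref{lemC}. Expanding $X^{*}_{ik}$ via (\ref{N}) and using $1-\cos 0=0$, $z_{(-j)k}=-z_{jk}$, and $X_{(-j)k}=X_{jk}$, one finds
$$
\tfrac1k\sum_{i=1}^k(X^{*}_{ik})^2=\frac{1}{(2\pi k^\gamma)^{2n}}\sum_{p,q=1}^{I_k'}a_{pqk}X_{pk}X_{qk},\qquad
a_{pqk}=\frac{4}{kh_k^2}\sum_{i=1}^k\bigl(1-\cos(z_{pk}\cdot h_ku_i)\bigr)\bigl(1-\cos(z_{qk}\cdot h_ku_i)\bigr),
$$
in which $X_{1k},\dots,X_{I_k'k}$ are now independent. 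As $|1-\cos|\le 2$ we get $|a_{pqk}|\le 16/h_k^2=O(k^{2(1-\gamma-\ee)})$, and $I_k'\sim k^n$, so Lemma~\ref{lemC} applies with $Y_{jk}=X_{jk}$, $m_k=I_k'$ and $\nu=2(1-\gamma-\ee)$; its hypothesis $2n-4n\gamma+2\nu<-1$ is precisely the second inequality of (\ref{eeg}). Thus $\tfrac1k\sum_i(X^{*}_{ik})^2\to0$ almost surely, and with the displayed inequality this gives $|b_{K_0}-b_{Q_k}|_k\to0$ almost surely. From here the proof coincides with the last paragraph of the proof of Theorem~\ref{covB}: using that $(u_i^*)$ is evenly spread, \cite[Lemma~7.1]{GKM06} (applied to $\Pi K_0$ and $\Pi Q_k$) gives $b_{Q_k}\le c|b_{K_0}-b_{Q_k}|_k+2\sqrt n$, so $S(Q_k)$ is bounded by (\ref{Cauchynew}) and the $\Pi Q_k$ are uniformly bounded; hence $\delta(\Pi K_0,\Pi Q_k)\to0$ by \cite[Theorem~6.1]{GKM06}, and since $\Pi(\nabla K_0)=\Pi K_0$ and $r_0B^n\subset\nabla K_0,Q_k\subset R_0B^n$ for large $k$ (as in \cite[Theorem~7.2]{GKM06} and \cite[Lemma~4.2]{GarM03}), the Bourgain--Campi--Lindenstrauss stability result for projection bodies yields $\delta(\nabla K_0,Q_k)\to0$.
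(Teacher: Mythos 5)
Your proposal is correct and follows essentially the same route as the paper: the same decomposition of $y_{ik}$ into the difference quotient, the deterministic error controlled by Lemma~\ref{lemA}, and the correlated noise term $T_{ik}=(N_k(o)-N_k(h_ku_i))/h_k$, with the key step being the application of Lemma~\ref{lemC} with $\nu=2(1-\gamma-\ee)$ to show $\tfrac1k\sum_i T_{ik}^2\to0$ almost surely, followed by the unchanged endgame of Theorem~\ref{covB}. The only cosmetic difference is that you merge the two Cauchy--Schwarz steps into a single inequality rather than passing separately through the analogues of Lemmas~\ref{LEMbasicinequality}, \ref{ub}, and \ref{extend}.
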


\begin{proof} We shall indicate the changes needed in Section~\ref{Algorithm}.
Note that by (\ref{yblas}), and (\ref{newmeas}) with $x=o$ and $x=h_ku_i$, we
have
$$
y_{ik}=\frac{M_k(o)-M_k(h_ku_i)}{h_k}
=\frac{g_{K_0}(o)-g_{K_0}(h_ku_i)}{h_k}+\frac{N_k(o)-N_k(h_ku_i)}{h_k}
-\frac{d_k(o)-d_k(h_ku_i)}{h_k},
$$
for $i=1,\dots,k$, where $N_k(o)$, $d_k(o)$, $N_k(h_ku_i)$, and $d_k(h_ku_i)$
are given by (\ref{N}) and (\ref{M}) with $x=o$ or $x=h_ku_i$, as appropriate.

Lemma~\ref{unif1} is unchanged. Turning to the proof of
Lemma~\ref{LEMbasicinequality}, we now have
$${y}_{ik}=\zeta_{ik}+T_{ik},$$
where
\begin{equation}\label{XX}
\zeta_{ik}=\frac{g_{K_0}(o)-g_{K_0}(h_ku_i)}{h_k}-\frac{d_k(o)-d_k(h_ku_i)}{h_k}
\quad{\text{and}}\quad T_{ik}=\frac{N_k(o)-N_k(h_ku_i)}{h_k},
\end{equation}
for $i=1,\dots,k$. Since $h_k\sim k^{\gamma-1+\ee}$ for $0<\ee<1-\gamma$, the
second term in the previous expression for $\zeta_{ik}$ converges to zero as
$k\rightarrow\infty$, by Lemma~\ref{lemA}, and hence $\zeta_{ik}\rightarrow
b_{K_0}(u_i)$ as $k\rightarrow\infty$, as before, for $i=1,\dots,k$.  Moreover,
$$b_{K_0}(u_i)-\zeta_{ik}=\left(b_{K_0}(u_i)-\frac{g_{K_0}(o)-g_{K_0}(h_ku_i)}
{h_k}\right)+\frac{d_k(o)-d_k(h_ku_i)}{h_k},$$ so arguing as in the proof of
Lemma~\ref{LEMbasicinequality}, we use Lemma~\ref{unif1} with $t=h_k$ to obtain
(\ref{new42}) with $t=h_k$, that is,
$$0\le b_{K_0}(u_i)-\frac{g_{K_0}(o)-g_{K_0}(h_ku_i)}{h_k}\le \frac{(n-1)h_k}{2r}
b_{K_0}(u_i),$$ if $h_k\le 2r$.  We also have
$$\frac{d_k(o)-d_k(h_ku_i)}{h_k}=O(k^{-\ee}),$$
by Lemma~\ref{lemA}, so there is a constant \refstepcounter{Cnmb}\label{PR12}
$c_{\arabic{Cnmb}}=c_{\arabic{Cnmb}}(n,r)$ such that
$$|b_{K_0}(u_i)-\zeta_{ik}|\le c_{\ref{PR12}}k^{-\beta},$$
for $\beta=\min\{\ee,1-\gamma+\ee\}$, and all $k\in\N$ and $i=1,\dots,k$. The
rest of the proof of Lemma~\ref{LEMbasicinequality} can be followed, yielding
that, almost surely, there is a constant \refstepcounter{Cnmb}\label{PR11}
$c_{\arabic{Cnmb}}=c_{\arabic{Cnmb}}(n,r)$ such that
\begin{equation}\label{binew}
|b_{K_0}-b_{Q_k}|_{k}^2\le 2{\Psi}(Q_k,(u_i),{\mathbf{T}}_k)-2
{\Psi}(K_0,(u_i),{\mathbf{T}}_k)+ \frac{c_{\ref{PR11}}}{k^{\beta}}|
b_{K_0}-b_{Q_k}|_{k},
\end{equation}
for all $k\in\N$.  (Again, we assume that the obvious changes are made in the
notation.)

The next task is to check that Lemma~\ref{ub} still holds.  With (\ref{binew})
in hand, this rests on proving that, almost surely,
$$V_k=\frac{1}{k}\sum_{i=1}^{k}T_{ik}^2$$
is bounded.  In fact we claim that, almost surely, $V_k\rightarrow 0$ as
$k\rightarrow\infty$.  To see this, note that
\begin{eqnarray*}
V_k&=&\frac{1}{k}\sum_{i=1}^{k}\left(\frac{N_k(o)-N_k(h_ku_i)}{h_k}\right)^2\\
&=&\frac{1}{k}\sum_{i=1}^{k}\left( \frac{1}{(2\pi
k^\gamma)^n}\sum_{j=-I_k'}^{I_k'}
\left(\frac{1-\cos(z_{jk}\cdot h_ku_i)}{h_k}\right)X_{jk}\right)^2\\
&=&\frac{1}{(2\pi k^\gamma)^{2n}}\sum_{p,q=-I_k'}^{I_k'}a_{pqk}X_{pk}X_{qk},
\end{eqnarray*}
where
\begin{equation}\label{anew}
a_{pqk}=\frac{1}{kh_k^2}\sum_{i=1}^{k}\bigl(1-\cos(z_{pk}\cdot h_ku_i)\bigr)
\bigl(1-\cos(z_{qk}\cdot h_ku_i)\bigr)
\end{equation}
and hence $|a_{pqk}|\le 4/h_k^2$.  As in the proof of Lemma~\ref{lemB}, we may
take the indices $p,q$ from 1 to $I_k'$, and then, by (\ref{eeg}), the claim
follows from Lemma~\ref{lemC} with $m_k=I_k'$ and $\nu=2(1-\gamma-\ee)$.

At this stage the work for Lemma~\ref{extend} is already done.  Indeed, by the
Cauchy-Schwarz inequality,
$${\Psi}(Q_k,(u_i),{\mathbf{T}}_k)-
{\Psi}(K_0,(u_i),{\mathbf{T}}_k)\le |b_{K_0}-b_{Q_k}|_k
\left(\frac{1}{k}\sum_{i=1}^{k}T_{ik}^2\right)^{1/2}=|b_{K_0}-b_{Q_k}|_k\,
V_k^{1/2}.$$ Using this and (\ref{binew}) we see that, almost surely,
$$|b_{K_0}-b_{Q_k}|_k\le 2V_k^{1/2}+\frac{c_{\ref{PR11}}}{k^{\beta}}\rightarrow 0,$$
as $k\rightarrow\infty$.

Finally, the proof of Theorem~\ref{covB} can be applied without change.
\end{proof}

The next algorithm corresponds to Algorithm~NoisyCovDiff($\varphi$).  As for
that algorithm, $\varphi$ is a nonnegative bounded measurable function on
$\R^n$ with support in $C_0^n$, such that $\int_{\R^n}\varphi(x)\,dx=1$.

\bigskip

{\large{\bf Algorithm~NoisyMod$^2$Diff($\varphi$)}}

\bigskip

{\it Input:} Natural numbers $n\ge 2$ and $k$; positive reals $\delta_k$ and
$\ee_k$; a real number $\gamma$ satisfying $0<\gamma<1$; noisy measurements
\begin{equation}\label{measPR3}
\widetilde{g}_{ik}=|\widehat{1_{K_0}}(z_{ik})|^2+X_{ik},
\end{equation}
of the squared modulus of the Fourier transform of the characteristic function
of an unknown convex body $K_0\subset C_0^n$ whose centroid is at the origin,
at the points in
$$\{z_{ik}: i=0,1,\dots,I_k'\}=\{o\}\cup (1/k^{\gamma})\Z^n_k(+),$$
where $\Z^n_k(+)$ satisfies (\ref{halfarray}) and where the $X_{ik}$'s are
row-wise independent zero mean random variables with uniformly bounded fourth moments.

\smallskip

{\it Task:} Construct an $o$-symmetric convex polytope $Q_k$ in $\R^n$ that
approximates the difference body $DK_0$.

\smallskip

{\it Action:}

1.  Let $\widetilde{g}_{ik}=\widetilde{g}_{(-i)k}$, for $i=-I_k',\dots,-1$, let
$x_{ik}=k^{\gamma-1}z_{ik}$, $i=-I_k',\dots,I_k'$ be the points in the cubic
array $2C_0^n\cap (1/k)\Z^n$, and let
\begin{equation}\label{mod2meas3}
M_k(x_{ik})=\frac{1}{(2\pi k^\gamma)^n} \sum_{j=-I_k'}^{I_k'} \cos(z_{jk}\cdot
x_{ik})\widetilde{g}_{jk},
\end{equation}
for $i=-I_k',\dots,I_k'$.

2. Run Algorithm~NoisyCovDiff($\varphi$) with inputs $n$, $k$, $\delta_k$,
$\ee_k$, and $M_{ik}$ replaced by $M_k(x_{ik})$, for $i=-I_k',\dots,I_k'$ and
with the obvious re-indexing in $i$. The output $Q_k$ of that algorithm is also
the output of the present one.

\bigskip

We shall show that the argument in Section~\ref{diffe} used to prove
Theorem~\ref{maindiffe} can be modified to yield the following convergence
result.

\begin{thm}\label{maindiffePR}
Suppose that $K_0$, $\delta_k$, $\ee_k$, and $g_k$ are as in
Algorithm~NoisyMod$\,^2$Diff($\varphi$). Assume that
$\lim_{k\to\infty}\ee_k=\lim_{k\to\infty}\delta_k=0$ and that
\begin{equation}\label{essentphase}
\liminf_{k\to\infty}\delta^4_kk^{4\gamma n-3n-3/2}>0,
\end{equation}
where $\gamma>3(1+1/(2n))/4$. If $Q_k$
is an output from Algorithm~NoisyMod$\,^2$Diff($\varphi$) as stated above,
then, almost surely,
$$
\delta(DK_0,Q_k)\le c_{\ref{cleadingconst}}\delta_k^{1/n},
$$
for sufficiently large $k$. In particular, almost surely, $Q_k$ converges to
$DK_0$ as $k\rightarrow\infty$.
\end{thm}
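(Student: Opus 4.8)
The plan is to follow the proof of Theorem~\ref{maindiffe} almost verbatim, the only substantive change being that the measurements $M_{ik}$ there are replaced by the quantities $M_k(x_{ik})$ of (\ref{mod2meas3}), which by (\ref{newmeas}) satisfy
$$M_k(x_{ik})=g_{K_0}(x_{ik})+N_k(x_{ik})-d_k(x_{ik}),$$
with $N_k(x_{ik})$, $d_k(x_{ik})$ given by (\ref{N}) and (\ref{M}). Writing $\beta_{ik}(x)=\int_{(1/k)C_0^n+x_{ik}}\varphi_{\ee_k}(x-z)\,dz$ as in (\ref{beta}), the kernel estimator constructed in Step~2 is $g_k(x)=\sum_i M_k(x_{ik})\beta_{ik}(x)$, so that $E(g_k(x))=\sum_i\bigl(g_{K_0}(x_{ik})-d_k(x_{ik})\bigr)\beta_{ik}(x)$ and
$$g_k(x)-E(g_k(x))=\sum_i\beta_{ik}(x)N_k(x_{ik})=\frac{1}{(2\pi k^\gamma)^n}\sum_{j=-I_k'}^{I_k'}\gamma_{jk}(x)X_{jk},$$
where $\gamma_{jk}(x)=\sum_i\beta_{ik}(x)\cos(z_{jk}\cdot x_{ik})$, so that $|\gamma_{jk}(x)|\le\sum_i\beta_{ik}(x)\le 1$ by (\ref{sumbetabound}).

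First I would prove the uniform bias estimate, the analogue of Lemma~\ref{unifAsympUnbias}: applying Lemma~\ref{unifAsympUnbias} to the first summand and Lemma~\ref{lemA} to the $d_k$-summand (and using $\sum_i\beta_{ik}(x)\le1$) gives $|E(g_k(x))-g_{K_0}(x)|\le n(\ee_k+1/k)+d_k$ for all $x$, with $d_k=O(k^{\gamma-1}(\log k)^n)$. Next comes the stochastic tail bound, the analogue of Lemma~\ref{expo}. Since $X_{jk}=X_{-jk}$, the displayed sum equals $(2\pi k^\gamma)^{-n}\bigl(\gamma_{0k}(x)X_{0k}+\sum_{j=1}^{I_k'}(\gamma_{jk}(x)+\gamma_{-jk}(x))X_{jk}\bigr)$, a sum of $I_k'+1=O(k^n)$ independent zero mean random variables with coefficients bounded by~$2$ and uniformly bounded fourth moments. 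Hence Markov's inequality with fourth moments together with Khinchine's inequality, exactly as in (\ref{markhi}), yields, uniformly in $x$,
$$\mathrm{Pr}\bigl(|g_k(x)-E(g_k(x))|\ge\delta_k/2\bigr)\le c\,\delta_k^{-4}k^{-4\gamma n}\cdot k^{n}\cdot k^{n}=O\bigl(\delta_k^{-4}k^{2n-4\gamma n}\bigr).$$
Because $\gamma>\tfrac34(1+\tfrac1{2n})$, the exponent $2n-4\gamma n+3n<-3/2$ is negative enough for (\ref{essentphase}) to be compatible with $\delta_k\to0$; and, just as in the proof of Theorem~\ref{maindiffe}, one checks that the bias above is eventually at most $\delta_k/2$ (here using in addition that, by $\gamma<1$ and Lemma~\ref{lemA}, the new term $d_k$ is negligible compared with $\delta_k$). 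Thus $\mathrm{Pr}(|g_k(x)-g_{K_0}(x)|>\delta_k)=O(\delta_k^{-4}k^{2n-4\gamma n})$ for large $k$.

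Setting $a_k=\max_{x\in 2C_0^n\cap(1/k)\Z^n}|g_k(x)-g_{K_0}(x)|$ as in the proof of Theorem~\ref{maindiffe}, a union bound over the $(2k+1)^n$ grid points gives $\mathrm{Pr}(a_k\ge\delta_k)=O(\delta_k^{-4}k^{3n-4\gamma n})=O(k^{-3/2})$ by (\ref{essentphase}), so the Borel--Cantelli lemma shows that almost surely $a_k<\delta_k$ for sufficiently large $k$. From here the argument is identical to that of Theorem~\ref{maindiffe}: on such a realization, $a_k<\delta_k$ gives $K_0(2\delta_k)\cap(1/k)\Z^n\subset S_k\subset\inte DK_0$; the covering argument yields $K_0(2\delta_k)\ominus(3/k)C_0^n\subset\conv\!\bigl(K_0(2\delta_k)\cap(1/k)\Z^n\bigr)\subset Q_k\subset DK_0$; and Lemma~\ref{sDelta} together with the convexity and $o$-symmetry of $DK_0$ gives $\bigl(1-(t_k+3/(s(K_0)k))\bigr)DK_0\subset Q_k\subset DK_0$ with $t_k=(2\delta_k/V(K_0))^{1/n}$. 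Since (\ref{essentphase}) and $\gamma<1$ force $k\delta_k^{1/n}\to\infty$, this yields $\delta(DK_0,Q_k)\le c_{\ref{cleadingconst}}\delta_k^{1/n}$ for large $k$, and in particular $Q_k\to DK_0$.

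The main obstacle is the stochastic term. Unlike in Section~\ref{diffe}, the errors $N_k(x_{ik})$ are \emph{not} row-wise independent — they are all linear combinations of the same variables $X_{jk}$ — so Lemma~\ref{expo} cannot be quoted verbatim but must be reproved by passing to the representation above in the (after the $\pm$ identification) independent $X_{jk}$. This substitution costs a factor of order $k^n$ in the tail bound (the rate $k^{2n-4\gamma n}$ replacing $(k\ee_k)^{-3n}$), which is precisely why the dimensional threshold $\gamma>\tfrac34(1+\tfrac1{2n})$ and the $\ee_k$-free summability condition (\ref{essentphase}) are forced; the deterministic error $d_k$, absent in Theorem~\ref{maindiffe}, is a comparatively routine matter handled by Lemma~\ref{lemA}.
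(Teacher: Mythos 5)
Your proposal is correct and follows essentially the same route as the paper: the paper likewise rewrites $g_k(x)-E\left(g_k(x)\right)$ as $(2\pi k^\gamma)^{-n}\sum_j\xi_{jk}(x)X_{jk}$ with $\xi_{jk}(x)=\sum_i\beta_{ik}(x)\cos(z_{jk}\cdot x_{ik})$ and $|\xi_{jk}(x)|\le 1$, applies Khinchine's and Markov's inequalities to get the per-point tail bound $O(\delta^{-4}k^{2n-4\gamma n})$, controls the bias via Lemma~\ref{unifAsympUnbias} together with Lemma~\ref{lemA}, and then reruns the proof of Theorem~\ref{maindiffe} verbatim. The only blemish is the displayed claim ``$2n-4\gamma n+3n<-1/2-1$'' written as $2n-4\gamma n+3n<-3/2$ (which would force $\gamma>1$); you mean $3n-4\gamma n<-3/2$, which is exactly what $\gamma>3(1+1/(2n))/4$ gives and what makes (\ref{essentphase}) compatible with $\delta_k\to0$ --- and your subsequent union-bound computation $O(\delta_k^{-4}k^{3n-4\gamma n})=O(k^{-3/2})$ is stated correctly.
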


\begin{proof}
Algorithm~NoisyMod$^2$Diff($\varphi$) can be regarded formally as
Algorithm~NoisyCovDiff($\varphi$) with $M_{ik}$ and $N_{ik}$ replaced by
$M_k(x_{ik})$ defined by (\ref{mod2meas3}) and $N_k(x_{ik})-d_k(x_{ik})$ defined by
(\ref{N}) and (\ref{M}) with $x=x_{ik}$, respectively. We follow the arguments of
Section~\ref{diffe} with this substitution in mind.

For Lemma~\ref{unifAsympUnbias}, we note first that by (\ref{N}),
$E(N_k(x_{ik}))=0$ for all $i$ and $k$. The same calculations as in the proof
of Lemma~\ref{unifAsympUnbias} lead to
$$|E(g_k(x))-g_{K_0}(x)|\le n(\ee_k+1/k)+d_k,$$
where $d_k$ is as in Lemma~\ref{lemA}.  By that lemma, $d_k\rightarrow 0$ as
$k\rightarrow\infty$ and hence the second statement in
Lemma~\ref{unifAsympUnbias} still holds.

Next, for Lemma~\ref{expo}, recall the definition (\ref{beta}) of $\beta_{ik}(x)$.
Then we have, by (\ref{N}),
\begin{eqnarray*}
g_k(x)-E(g_k(x))&=&\sum_{i=-I_k'}^{I_k'}\beta_{ik}(x)N_k(x_{ik})\\
&=&\frac{1}{(2\pi k^{\gamma})^n}\sum_{j=-I_k'}^{I_k'}
\left(\sum_{i=-I_k'}^{I_k'}\beta_{ik}(x)\cos(z_{jk}\cdot x_{ik})\right)X_{jk}\\
&=&\frac{1}{(2\pi k^{\gamma})^n}\sum_{j=-I_k'}^{I_k'}\xi_{jk}(x)X_{jk},
\end{eqnarray*}
say.  This is a weighted sum of independent random variables, so we can apply
Khinchine's inequality (see, for example, \cite[(4.32.1), p.~307]{Hof94} with $\alpha=4$) to obtain
$$E\left(\left|\sum_{i=-I_k'}^{I_k'}\beta_{ik}(x)N_k(x_{ik})\right|^4\right)\le
\frac{c(2k+1)^n}{(2\pi k^{\gamma})^{4n}}\sum_{j=-I_k'}^{I_k'}E\left|\xi_{jk}(x)X_{jk}\right|^4.$$
for some constant $c>0$. Also,
$$
|\xi_{jk}(x)|^4\le \left(\sum_{i=-I_k'}^{I_k'}\beta_{ik}(x)\right)^4
\le 1,
$$
by (\ref{sumbetabound}). The same argument as in the proof of Lemma~\ref{expo} now leads to the conclusion that there are constants \refstepcounter{Cnmb}\label{caa22PR} $c_{\arabic{Cnmb}}=c_{\arabic{Cnmb}}(\varphi)$ and
\refstepcounter{Nnmb}\label{naPR}
$N_{\arabic{Nnmb}}=N_{\arabic{Nnmb}}((\ee_k),n)\in \N$ such that if $\delta>0$, then
\begin{equation}\label{newprob}
\Pr(|g_k(x)-g_{K_0}(x)|>\delta)\le c_{\ref{caa22PR}}(2k+1)^{2n}k^{-4\gamma n}\delta^{-4},
\end{equation}
for all $k\ge N_{\ref{naPR}}$ and all $x\in \R^n$.   (Compare (\ref{exp1}).)

Lemma~\ref{sDelta} is unchanged.  With (\ref{essentphase}) instead of the hypothesis (\ref{essent}) of Theorem~\ref{maindiffe}, and the new estimate (\ref{newprob}), we arrive in the proof of Theorem~\ref{maindiffe} at the estimate
$$\Pr(a_k\ge \delta_k)\le
c_{\ref{caa22PR}}(2k+1)^{3n}k^{-4\gamma n}\delta_k^{-4}=O(k^{-3/2}),$$
so the Borel-Cantelli lemma can be used as before. This is all that is required to allow the proof of Theorem~\ref{maindiffe} to go through until near the end, when we use the fact that $k\delta_k^{1/n}\rightarrow\infty$ as
$k\rightarrow\infty$.   By (\ref{essentphase}) and the fact that $\gamma<1$,  this still holds.  Then the conclusion is the same, namely that, almost surely,
$$\delta(DK_0,Q_k)\le c_{\ref{cleadingconst}}\delta_k^{1/n},$$
for sufficiently large $k$.
\end{proof}

Concerning Corollary~\ref{cordiffe}, by using $\gamma>3(1+1/(2n))/4$ and (\ref{essentphase}) instead of (\ref{essent}), we can achieve a convergence rate arbitrarily close to $k^{-1/4+3/(8n)}$, the same as before.  If we assume instead that the random variables $X_{ik}$ in Algorithm~NoisyMod$^2$Diff($\varphi$) are row-wise independent, zero mean, and satisfy (\ref{Bernie1}) and (\ref{Bernie2}), that $\gamma>1/2$, and that \refstepcounter{Cnmb}\label{cassump}
\begin{equation}\label{finalassump}
\liminf_{k\rightarrow\infty}\frac{\delta_k^2 k^{n(2\gamma-1)}}{\log k}>c_{\arabic{Cnmb}}(n+2),
\end{equation}
where $c_{\ref{cassump}}=c_{\ref{cassump}}(n,\sigma)=(3^{n+2}\sigma^2)/((2\pi)^{2n})$,
then a rate arbitrarily close to $k^{-1/2}$ can be obtained by the methods outlined in Remark~\ref{cordifferem}.

\section{Phase retrieval from the modulus}\label{PRIII}

This section addresses Problem~3 in the introduction.  A simple trick converts
Problem~3 into one very closely related to Problem~2, considered in the
previous section.

Suppose, more generally, that noisy measurements are taken of
$\sqrt{\widehat{g}}$, where $g$ is an even continuous real-valued function on
$\R^n$ with support in $[-1,1]^n$.  The just-mentioned trick is to take two
independent measurements at each point, multiply the two, and use the resulting
quantities in place of the measurements of $\widehat{g}$ considered earlier.
Thus instead of (\ref{1}) above we have, for $r=1,2$, measurements
$$
\overline{g}_{z,k}^{(r)}=\sqrt{\widehat{g}(z/k^\gamma)}+X_{z,k}^{(r)},
$$
of $\sqrt{\widehat{g}}$, for $z\in \{o\}\cup \Z^n_k(+)$, where $\Z^n_k(+)$
satisfies (\ref{halfarray}) and where the $X_{z,k}^{(r)}$'s are row-wise independent
(i.e., independent for fixed $k$) zero mean random variables with uniformly bounded fourth moments.  Then we replace $\tilde{g}_{z,k}$ in (\ref{1}) by
\begin{equation}\label{2s}
\overline{g}_{z,k}=\overline{g}_{z,k}^{(1)}
\overline{g}_{z,k}^{(2)}=\widehat{g}(z/k^\gamma)+
\sqrt{\widehat{g}(z/k^\gamma)}\left(X_{z,k}^{(1)}+X_{z,k}^{(2)}\right)+
X_{z,k}^{(1)}X_{z,k}^{(2)}.
\end{equation}
Setting $\overline{g}_{jk}=\overline{g_{K_0}}_{z_{jk},k}$ and
$X_{jk}=X_{z_{jk},k}$, the same notation and analysis that gave
(\ref{newmeas}), but now using (\ref{esti}) and (\ref{2s}), leads instead to
$$
\overline{M}_k(x)=g_{K_0}(x)+\overline{N}_k(x)-d_k(x),
$$
where
\begin{equation}\label{newmeasexps}
\overline{M}_k(x)=\frac{1}{(2\pi k^\gamma)^n}
\sum_{j=-I_k'}^{I_k'}\cos(z_{jk}\cdot x)\overline{g}_{jk}
\end{equation}
is an estimate of $g_{K_0}(x)$,
\begin{equation}\label{Ns}
\overline{N}_k(x)=\frac{1}{(2\pi k^\gamma)^n}\sum_{j=-I_k'}^{I_k'}
\sqrt{\widehat{g_{K_0}}(z_{jk}/k^\gamma)}\cos(z_{jk}\cdot
x)\left(X_{jk}^{(1)}+X_{jk}^{(2)}\right)+\frac{1}{(2\pi
k^\gamma)^n}\sum_{j=-I_k'}^{I_k'}\cos(z_{jk}\cdot x)X_{jk}^{(1)}X_{jk}^{(2)}
\end{equation}
is a random variable, and the deterministic error $d_k(x)$ is given as before
by (\ref{M}).

For our analysis it will be convenient to let
\begin{equation}\label{N1s}
\overline{N}_{k1}(x)=\frac{1}{(2\pi k^\gamma)^n}\sum_{j=-I_k'}^{I_k'}
\sqrt{\widehat{g_{K_0}}(z_{jk}/k^\gamma)}\cos(z_{jk}\cdot
x)\left(X_{jk}^{(1)}+X_{jk}^{(2)}\right)
\end{equation}
and
\begin{equation}\label{N2s}
\overline{N}_{k2}(x)=\frac{1}{(2\pi
k^\gamma)^n}\sum_{j=-I_k'}^{I_k'}\cos(z_{jk}\cdot x)X_{jk}^{(1)}X_{jk}^{(2)},
\end{equation}
so that $\overline{N}_k(x)=\overline{N}_{k1}(x)+\overline{N}_{k2}(x)$.

To keep the exposition brief, we shall not give a formal presentation of our
algorithms, called {\bf Algorithm~NoisyModLSQ}, {\bf
Algorithm~NoisyModBlaschke}, and {\bf Algorithm \newline
NoisyModDiff($\varphi$)}, since they are very similar to
Algorithm~NoisyMod$^2$LSQ, Algorithm NoisyMod$^2$Blaschke, and Algorithm
NoisyMod$^2$Diff($\varphi$), respectively.  In each case the input is as
before, except that instead of (\ref{measPR}), (\ref{measPRblas}), and
(\ref{measPR3}), we now have measurements
$$
\overline{g}_{ik}^{(r)}=|\widehat{1_{K_0}}(z_{ik})|+X_{ik}^{(r)},
$$
for $r=1,2$, of the modulus of the Fourier transform of the characteristic
function of $K_0$, where the $X_{ik}^{(r)}$'s are row-wise independent zero mean random variables with uniformly bounded fourth moments.  The task is the same in each case.  For the actions, we first let $\overline{g}_{ik}=\overline{g}_{ik}^{(1)}
\overline{g}_{ik}^{(2)}$ and then follow the actions of the appropriate
algorithms in the previous section, replacing $\widetilde{g}$ by
$\overline{g}$. Thus in the action of each algorithm, we replace $M_k(x)$ by
$\overline{M}_k(x)$ defined by (\ref{newmeasexps}), for the appropriate $x$.

\begin{thm}\label{maincovPRs}
Theorem~\ref{maincovPR} holds when Algorithm~NoisyMod$\,^2$LSQ is replaced by
Algorithm NoisyModLSQ.
\end{thm}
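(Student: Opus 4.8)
The plan is to run the proof of Theorem~\ref{maincovPR} with essentially no change, the only substantive point being that the random variables $N_k(x_{ik})$ occurring there must be replaced throughout by $\overline{N}_k(x_{ik})=\overline{N}_{k1}(x_{ik})+\overline{N}_{k2}(x_{ik})$ of (\ref{N1s}) and (\ref{N2s}), while the deterministic error $d_k(x_{ik})$ and everything else remain exactly as in Section~\ref{PRII}. This is legitimate because Algorithm~NoisyModLSQ is, by construction, Algorithm~NoisyMod$^2$LSQ with the quantities $\widetilde{g}_{jk}$ replaced by $\overline{g}_{jk}=\overline{g}_{jk}^{(1)}\overline{g}_{jk}^{(2)}$, so that the estimate $\overline{M}_k(x)$ of (\ref{newmeasexps}) plays the role of $M_k(x)$ and the decomposition $\overline{M}_k(x)=g_{K_0}(x)+\overline{N}_k(x)-d_k(x)$ replaces (\ref{newmeas}). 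The whole chain of lemmas in Section~\ref{convergence} then goes through with $N_k$ replaced by $\overline{N}_k$, once we supply the analogue of Lemma~\ref{lemB}; that is the only place where the particular structure of the noise is used.

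So the heart of the matter is to show that, almost surely, $\frac{1}{I_k}\sum_{i=-I_k'}^{I_k'}\overline{N}_k(x_{ik})^2\to0$ as $k\to\infty$. Since $\overline{N}_k^2\le2\overline{N}_{k1}^2+2\overline{N}_{k2}^2$ and $\overline{N}_k(x_{ik})^{\pm}\le|\overline{N}_k(x_{ik})|$, this, together with Cauchy--Schwarz and the fact that the bracketing functions $g_j^U,g_j^L$ are bounded by $1$, yields the required substitute for Lemma~\ref{lemB} (with $N_k(x_{ik})^\pm$ replaced by their barred versions and with the bracketing coefficients inserted). To handle the quadratic average of $\overline{N}_{k1}$, put $W_{jk}=X_{jk}^{(1)}+X_{jk}^{(2)}$, which are row-wise independent, zero mean, and have uniformly bounded fourth moments; expanding and averaging over $i$ exactly as in the proof of Lemma~\ref{lemB} yields $(2\pi k^\gamma)^{-2n}\sum_{p,q}a_{pqk}W_{pk}W_{qk}$, where each $a_{pqk}$ is a product of two values of $\sqrt{\widehat{g_{K_0}}}$ times an average of cosine products. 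Since $\widehat{g_{K_0}}=|\widehat{1_{K_0}}|^2$ is nonnegative with $\widehat{g_{K_0}}(\xi)\le\|g_{K_0}\|_1=V(K_0)^2\le1$ for all $\xi$, we have $\sqrt{\widehat{g_{K_0}}}\le1$ and hence $|a_{pqk}|=O(1)$; using $W_{jk}=W_{(-j)k}$ and the symmetry $a_{(-p)qk}=a_{p(-q)k}=a_{pqk}$ to restrict the sum to $p,q=1,\dots,I_k'$, Lemma~\ref{lemC} with $m_k=I_k'$ and $\nu=0$ applies, because the condition $2n-4n\gamma+2\nu<-1$ there then reads $\gamma>1/2+1/(4n)$, which is the lower bound in (\ref{gamma}). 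The quadratic average of $\overline{N}_{k2}$ similarly reduces to $(2\pi k^\gamma)^{-2n}\sum_{p,q}c_{pqk}X_{pk}^{(1)}X_{pk}^{(2)}X_{qk}^{(1)}X_{qk}^{(2)}$ with $c_{pqk}=O(1)$, and Lemma~\ref{lemCs} with $m_k=I_k'$ and $\nu=0$, valid under the same hypothesis (\ref{gamma}), disposes of it.

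Granting this, the remainder is a transcription of the proof of Theorem~\ref{maincovPR}. Inequality (\ref{normboundnew}) holds verbatim with $\overline{N}_k$ and the correspondingly modified $\Psi$, since its derivation from Lemma~\ref{lem1} uses only $\overline{M}_k(x_{ik})=g_{K_0}(x_{ik})+\overline{N}_k(x_{ik})-d_k(x_{ik})$; Proposition~\ref{prp2} and Lemma~\ref{net} are untouched; and the bracketing-net argument of Lemma~\ref{bound} produces (\ref{38}) (almost sure uniform convergence to zero of the modified $\Psi$), now invoking the estimate of the previous paragraph in place of Lemma~\ref{lemB}. The deterministic error is controlled by Lemma~\ref{lemA} exactly as before, because $d_k$ is literally the same function, so together with $g_{P(a)\cap C_0^n}(x_{ik})\le1$ and $g_{P_k}(x_{ik})\le V(2C_0^n)$ the last term of (\ref{normboundnew}) tends to zero. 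Thus Lemma~\ref{pseudcov} holds for the output $P_k$ of Algorithm~NoisyModLSQ, and the accumulation-point argument of Theorem~\ref{maincov} then gives $\min\{\delta(K_0,P_k),\delta(-K_0,P_k)\}\to0$ almost surely.

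There is no genuine obstacle here: Lemmas~\ref{lemC} and~\ref{lemCs} were designed precisely to absorb, respectively, the quadratic contribution of the ``linear'' noise $\overline{N}_{k1}$ and the genuinely bilinear noise $\overline{N}_{k2}$, and they cost nothing beyond the hypothesis (\ref{gamma}) already present in Theorem~\ref{maincovPR}. The only points needing a moment's attention are the uniform boundedness of $\sqrt{\widehat{g_{K_0}}}$ (which forces the weights $a_{pqk}$ to be $O(1)$, so that $\nu=0$ suffices and the restriction on $\gamma$ is not worsened) and the fact that $W_{jk}=X_{jk}^{(1)}+X_{jk}^{(2)}$ inherits row-wise independence, the zero-mean property, and uniformly bounded fourth moments from the $X_{jk}^{(r)}$.
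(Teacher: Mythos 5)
Your proposal is correct and follows essentially the same route as the paper: split $\overline{N}_k=\overline{N}_{k1}+\overline{N}_{k2}$, handle the quadratic average of $\overline{N}_{k1}$ by the argument of Lemma~\ref{lemB} (using that $\sqrt{\widehat{g_{K_0}}}$ is bounded, so the weights stay $O(1)$ and Lemma~\ref{lemC} applies with $\nu=0$), dispose of the $\overline{N}_{k2}$ contribution via Lemma~\ref{lemCs}, and then transcribe the proof of Theorem~\ref{maincovPR}. The paper merely states that the Lemma~\ref{lemB} analysis applies ``up to a constant'' to the $\overline{N}_{k1}$ term, whereas you verify the hypotheses explicitly via $W_{jk}=X_{jk}^{(1)}+X_{jk}^{(2)}$; the substance is identical.
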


\begin{proof}
In the action of Algorithm~NoisyModLSQ, the measurements used in
Algorithm~NoisyCovLSQ are now $\overline{M}_k(x_{ik})$, $i=-I_k',\dots,I_k'$,
where $\overline{M}_k(x_{ik})$ is given by (\ref{newmeasexps}) with $x=x_{ik}$.
Thus we have
$$\overline{M}_k(x_{ik})=g_{K_0}(x_{ik})+\overline{N}_k(x_{ik})-d_k(x_{ik}),$$
$i=-I_k',\dots,I_k'$, where $\overline{N}_k(x_{ik})$ and $d_k(x_{ik})$ are
given by (\ref{Ns}) and (\ref{M}), respectively, with $x=x_{ik}$.

We claim that Lemma~\ref{lemB} holds when $N_k(x_{ik})$ is replaced by
$\overline{N}_k(x_{ik})$. To see this, use the triangle inequality to obtain
\begin{eqnarray*}
\frac1{I_k}\sum_{i=-I_k'}^{I_k'} \overline{N}_k(x_{ik})^+&\leq &
\left(\frac1{I_k}\sum_{i=-I_k'}^{I_k'} \overline{N}_k(x_{ik})^2\right)^{1/2}\\
&\le & \left(\frac1{I_k}\sum_{i=-I_k'}^{I_k'}
\overline{N}_{k1}(x_{ik})^2\right)^{1/2}
+\left(\frac1{I_k}\sum_{i=-I_k'}^{I_k'}
\overline{N}_{k2}(x_{ik})^2\right)^{1/2},
\end{eqnarray*}
where $\overline{N}_{k1}(x_{ik})$ and $\overline{N}_{k2}(x_{ik})$ are given by
(\ref{N1s}) and (\ref{N2s}), respectively, with $x=x_{ik}$. Since
$\widehat{g_{K_0}}$ is bounded, the same analysis as in the proof of
Lemma~\ref{lemB}, up to a constant, applies to the first of the two sums in the
previous expression.  So it suffices to prove that, almost surely,
$$
\overline{S}_k=\frac1{I_k}\sum_{i=-I_k'}^{I_k'}
\overline{N}_{k2}(x_{ik})^2\rightarrow 0,
$$
as $k\rightarrow\infty$.  As in the proof of Lemma~\ref{lemB}, it is enough to
show that, almost surely,
$$\frac{1}{(2\pi k^\gamma)^{2n}}\sum_{p,q=1}^{I_k'}c_{pqk}X_{pk}^{(1)}X_{pk}^{(2)}
X_{qk}^{(1)}X_{qk}^{(2)}\rightarrow 0,$$ as $k\rightarrow\infty$.  This follows
from Lemma~\ref{lemCs} and proves the claim.

With this in hand, we can conclude exactly as in the proof of
Theorem~\ref{maincovPR} that Algorithm NoisyCovLSQ works with the new
measurements under the same hypotheses.
\end{proof}

We remark that the computation of $E(\overline{Z}_k)$ in Lemma~\ref{lemCs}
shows why we take two independent measurements of $\sqrt{\widehat{g_{K_0}}}$
and multiply, rather than taking a single measurement and squaring it. In the
latter case we would be led to
$$
E(\overline{Z}_k)= \frac{1}{(2\pi
k^\gamma)^{2n}}\sum_{p,q=1}^{m_k}a_{pqk}E(Y_{pk}^2)E(Y_{qk}^2)=O(k^{2n-2n\gamma+\nu}),
$$
which may be unbounded as $k\rightarrow\infty$.

\begin{thm}\label{covBPRs}
Theorem~\ref{covBPR} holds when Algorithm~NoisyMod$\,^2$Blaschke is replaced by
Algorithm NoisyModBlaschke.
\end{thm}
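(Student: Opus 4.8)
The plan is to mimic the proof of Theorem~\ref{covBPR}, tracking only the changes that arise because the random variables $N_k(x)$ of Section~\ref{PRII} are now replaced by $\overline{N}_k(x)=\overline{N}_{k1}(x)+\overline{N}_{k2}(x)$, with $\overline{N}_{k1}$ and $\overline{N}_{k2}$ defined in \eqref{N1s} and \eqref{N2s}.  In Algorithm~NoisyModBlaschke, the quantity $y_{ik}$ fed into Algorithm~NoisyBrightLSQ is $\bigl(\overline{M}_k(o)-\overline{M}_k(h_ku_i)\bigr)/h_k$, so, exactly as in the proof of Theorem~\ref{covBPR}, we write $y_{ik}=\zeta_{ik}+\overline{T}_{ik}$, where $\zeta_{ik}$ is given by the right-hand expression in \eqref{XX} (involving $g_{K_0}$ and the deterministic error $d_k$) and $\overline{T}_{ik}=\bigl(\overline{N}_k(o)-\overline{N}_k(h_ku_i)\bigr)/h_k$.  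The estimate $|b_{K_0}(u_i)-\zeta_{ik}|\le c\,k^{-\beta}$ with $\beta=\min\{\ee,1-\gamma+\ee\}$ is unchanged, since it depends only on Lemma~\ref{unif1} and Lemma~\ref{lemA}, both of which are independent of the noise.  Hence the inequality \eqref{binew} holds verbatim with $\mathbf{T}_k$ replaced by $\overline{\mathbf{T}}_k=(\overline{T}_{1k},\dots,\overline{T}_{kk})$.

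The only genuinely new work is to show that, almost surely,
$$\overline{V}_k=\frac{1}{k}\sum_{i=1}^{k}\overline{T}_{ik}^2\rightarrow 0$$
as $k\rightarrow\infty$; with this in hand, Lemmas~\ref{ub} and~\ref{extend} go through exactly as in the proof of Theorem~\ref{covBPR}, and then the proof of Theorem~\ref{covB} applies without change.  To handle $\overline{V}_k$, I would split $\overline{T}_{ik}=\overline{T}_{ik}^{(1)}+\overline{T}_{ik}^{(2)}$ according to $\overline{N}_k=\overline{N}_{k1}+\overline{N}_{k2}$, use the triangle inequality in the $\ell^2$-pseudonorm to write
$$\overline{V}_k^{1/2}\le\left(\frac1k\sum_{i=1}^k\bigl(\overline{T}_{ik}^{(1)}\bigr)^2\right)^{1/2}
+\left(\frac1k\sum_{i=1}^k\bigl(\overline{T}_{ik}^{(2)}\bigr)^2\right)^{1/2},$$
and treat the two pieces separately.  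The first piece is a quadratic form $\frac{1}{(2\pi k^\gamma)^{2n}}\sum_{p,q}a_{pqk}Y_{pk}Y_{qk}$ in the linear combinations $Y_{jk}=X_{jk}^{(1)}+X_{jk}^{(2)}$, where the coefficients $a_{pqk}$ are essentially those of \eqref{anew} multiplied by the bounded quantities $\sqrt{\widehat{g_{K_0}}(z_{pk}/k^\gamma)\,\widehat{g_{K_0}}(z_{qk}/k^\gamma)}$, so again $|a_{pqk}|=O(h_k^{-2})=O(k^{-2(1-\gamma-\ee)})$; since the $Y_{jk}$ are row-wise independent, zero mean, with uniformly bounded fourth moments, Lemma~\ref{lemC} with $m_k=I_k'$ and $\nu=2(1-\gamma-\ee)$ (the condition $2n-4n\gamma+2\nu<-1$ being exactly the second inequality in \eqref{eeg}) gives almost sure convergence to zero.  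The second piece is of the form $\frac{1}{(2\pi k^\gamma)^{2n}}\sum_{p,q}a_{pqk}X_{pk}^{(1)}X_{pk}^{(2)}X_{qk}^{(1)}X_{qk}^{(2)}$ with the same $a_{pqk}$; this is precisely the situation covered by Lemma~\ref{lemCs}, which yields almost sure convergence to zero under the same constraint $2n-4n\gamma+2\nu<-1$.  Adding the two, $\overline{V}_k\rightarrow0$ almost surely.

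I expect the main obstacle to be purely bookkeeping: one must be careful that the weights $\sqrt{\widehat{g_{K_0}}(z_{jk}/k^\gamma)}$ appearing in $\overline{N}_{k1}$ are uniformly bounded (they are, since $\widehat{g_{K_0}}=|\widehat{1_{K_0}}|^2\le g_{K_0}(o)=V(K_0)$), so that folding them into the coefficients $a_{pqk}$ does not worsen the order estimate, and that the reduction from the index range $\{-I_k',\dots,I_k'\}$ to $\{1,\dots,I_k'\}$ (using $c_{(-p)qk}=c_{p(-q)k}=c_{pqk}$ and $X_{(-j)k}=X_{jk}$) is carried out as in the proof of Lemma~\ref{lemB}.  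Once $\overline{V}_k\rightarrow0$ is established, no further difficulty arises: the Cauchy-Schwarz step $\Psi(Q_k,(u_i),\overline{\mathbf{T}}_k)-\Psi(K_0,(u_i),\overline{\mathbf{T}}_k)\le|b_{K_0}-b_{Q_k}|_k\,\overline{V}_k^{1/2}$ combined with \eqref{binew} forces $|b_{K_0}-b_{Q_k}|_k\le 2\overline{V}_k^{1/2}+c_{\ref{PR11}}k^{-\beta}\rightarrow0$, and then the passage from convergence of the brightness-function pseudonorm to $\delta(\nabla K_0,Q_k)\to0$ is word-for-word the end of the proof of Theorem~\ref{covB}, via the projection body, the uniform boundedness from Lemma~\ref{ub}, \cite[Theorem~6.1]{GKM06}, and the Bourgain-Campi-Lindenstrauss stability estimate.
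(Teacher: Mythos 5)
Your proposal is correct and follows essentially the same route as the paper: decompose $\overline{N}_k=\overline{N}_{k1}+\overline{N}_{k2}$, use the triangle inequality in the $\ell^2$-pseudonorm, handle the $\overline{N}_{k1}$-piece by the earlier analysis (Lemma~\ref{lemC} with the bounded weights $\sqrt{\smash[b]{\widehat{g_{K_0}}}}$ absorbed into the coefficients $a_{pqk}$ of \eqref{anew}), and handle the $\overline{N}_{k2}$-piece by Lemma~\ref{lemCs}, after which the rest of the proof of Theorem~\ref{covBPR} goes through unchanged. (The only quibble is your justification of boundedness: $|\widehat{1_{K_0}}|\le V(K_0)$ gives $\widehat{g_{K_0}}\le V(K_0)^2$, not $\le V(K_0)$ as written, but uniform boundedness is all that is needed.)
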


\begin{proof}
We now have
$$\overline{y}_{ik}=\zeta_{ik}+\overline{T}_{ik},$$
where $\zeta_{ik}$ is as in (\ref{XX}) and
\begin{equation}\label{XXs}
\overline{T}_{ik}=\frac{\overline{N}_k(o)-\overline{N}_k(h_ku_i)}{h_k}
=\frac{\overline{N}_{k1}(o)-\overline{N}_{k1}(h_ku_i)}{h_k}+
\frac{\overline{N}_{k2}(o)-\overline{N}_{k2}(h_ku_i)}{h_k},
\end{equation}
for $i=1,\dots,k$, where $\overline{N}_{k1}$ and $\overline{N}_{k2}$ are given
by (\ref{N1s}) and (\ref{N2s}).  The proof of Theorem~\ref{covBPR} can be
followed, except that for Lemma~\ref{ub}, one now shows that, almost surely,
$$\overline{V}_k=\frac{1}{k}\sum_{i=1}^{k}\overline{T}_{ik}^2\rightarrow 0$$ as
$k\rightarrow\infty$.  Using the fact that the earlier analysis applies to
$\overline{N}_{k1}$, and using also the triangle inequality, as we did in the
proof of Theorem~\ref{maincovPRs}, with (\ref{XXs}), we see that it suffices to
examine
$$\frac{1}{(2\pi k^\gamma)^{2n}}\sum_{p,q=1}^{I_k'}a_{pqk}X_{pk}^{(1)}
X_{pk}^{(2)}X_{qk}^{(1)}X_{qk}^{(2)},$$ where $a_{pqk}$ is given by
(\ref{anew}).  Then Lemma~\ref{lemCs} shows that it is possible to choose
$\gamma$ and $\ee$ exactly as in Theorem~\ref{covBPR} to ensure that
Lemma~\ref{ub} holds. No further changes are required, so
Algorithm~NoisyCovBlaschke works with the new measurements under the same
hypotheses as in Theorem~\ref{covBPR}.
\end{proof}

\begin{thm}\label{maindiffePRs}
Theorem~\ref{maindiffePR} holds when Algorithm~NoisyMod$\,^2$Diff($\varphi$) is
replaced by Algorithm NoisyModDiff($\varphi$).
\end{thm}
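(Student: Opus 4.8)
The plan is to regard Algorithm~NoisyModDiff($\varphi$), just as in the proof of Theorem~\ref{maindiffePR}, as a formal instance of Algorithm~NoisyCovDiff($\varphi$): the measurements $M_{ik}$ are replaced by $\overline{M}_k(x_{ik})$ given by (\ref{newmeasexps}) with $x=x_{ik}$, and the errors $N_{ik}$ are replaced by $\overline{N}_k(x_{ik})-d_k(x_{ik})$, where $\overline{N}_k=\overline{N}_{k1}+\overline{N}_{k2}$ is as in (\ref{N1s}) and (\ref{N2s}) and $d_k$ is as in (\ref{M}). One then reruns the argument of Section~\ref{diffe}, in the modified form used to prove Theorem~\ref{maindiffePR}, tracking the single genuinely new feature: the bilinear term $\overline{N}_{k2}$, built from the products $X_{jk}^{(1)}X_{jk}^{(2)}$.

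For the analogue of Lemma~\ref{unifAsympUnbias}, since the $X_{jk}^{(r)}$, $r=1,2$, are row-wise independent and zero mean, we have $E(X_{jk}^{(1)}X_{jk}^{(2)})=0$, hence $E(\overline{N}_k(x_{ik}))=0$, and the estimate of that lemma becomes $|E(g_k(x))-g_{K_0}(x)|\le n(\ee_k+1/k)+d_k$, where $d_k\to 0$ by Lemma~\ref{lemA}; so $g_k$ remains uniformly asymptotically unbiased. For the analogue of Lemma~\ref{expo}, write, with $\beta_{ik}(x)$ as in (\ref{beta}),
$$g_k(x)-E(g_k(x))=\sum_{i=-I_k'}^{I_k'}\beta_{ik}(x)\overline{N}_k(x_{ik})
=\frac{1}{(2\pi k^\gamma)^n}\sum_{j=-I_k'}^{I_k'}\xi_{jk}(x)\left(\sqrt{\widehat{g_{K_0}}(z_{jk}/k^\gamma)}\bigl(X_{jk}^{(1)}+X_{jk}^{(2)}\bigr)+X_{jk}^{(1)}X_{jk}^{(2)}\right),$$
where $\xi_{jk}(x)=\sum_i\beta_{ik}(x)\cos(z_{jk}\cdot x_{ik})$ satisfies $|\xi_{jk}(x)|\le 1$ by (\ref{sumbetabound}). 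Using the $L^4$-triangle inequality, split the right-hand side into the $\overline{N}_{k1}$-part and the $\overline{N}_{k2}$-part. Since $\widehat{g_{K_0}}=|\widehat{1_{K_0}}|^2$ is bounded, the first part is a weighted sum, with uniformly bounded coefficients, of the row-wise independent zero-mean variables $X_{jk}^{(1)}+X_{jk}^{(2)}$, which have uniformly bounded fourth moments; the fourth-moment and Markov computation of the proof of Lemma~\ref{expo} applies to it verbatim. For the second part, the products $X_{jk}^{(1)}X_{jk}^{(2)}$ are again row-wise independent (they involve disjoint blocks of an independent family) and zero mean, and $E\bigl(|X_{jk}^{(1)}X_{jk}^{(2)}|^4\bigr)=E\bigl(|X_{jk}^{(1)}|^4\bigr)E\bigl(|X_{jk}^{(2)}|^4\bigr)$ is uniformly bounded, so the same computation applies to it too. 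Combining, we obtain, for all sufficiently large $k$ and all $x\in\R^n$, a bound $\Pr(|g_k(x)-g_{K_0}(x)|>\delta)\le c(2k+1)^{2n}k^{-4\gamma n}\delta^{-4}$ with $c$ depending only on $\varphi$ and the noise bound, which is exactly (\ref{newprob}) up to the value of the constant.

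Finally, Lemma~\ref{sDelta} is untouched. With this estimate and the hypothesis (\ref{essentphase}) (and $\gamma>3(1+1/(2n))/4$), the probability estimate $\Pr(a_k\ge\delta_k)=O(k^{-3/2})$ from the proof of Theorem~\ref{maindiffePR} holds unchanged, so the Borel-Cantelli lemma gives $a_k<\delta_k$ for sufficiently large $k$, almost surely; and since $\gamma<1$, (\ref{essentphase}) still forces $k\delta_k^{1/n}\to\infty$. The remainder of the proof of Theorem~\ref{maindiffe} then goes through word for word, yielding that, almost surely, $\delta(DK_0,Q_k)\le c_{\ref{cleadingconst}}\delta_k^{1/n}$ for sufficiently large $k$, hence $Q_k\to DK_0$. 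The nominal obstacle is the $\overline{N}_{k2}$-term; but the only thing to verify there — that the products $X_{jk}^{(1)}X_{jk}^{(2)}$ retain row-wise independence, zero mean, and uniformly bounded fourth moments — is immediate, so in practice this step is routine and the proof is essentially a transcription of the proof of Theorem~\ref{maindiffePR}. (The same reasoning also transfers the improvement of Remark~\ref{cordifferem} to this setting when the $X_{ik}^{(r)}$ satisfy (\ref{Bernie1}) and (\ref{Bernie2}).)
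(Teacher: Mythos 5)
Your proposal is correct and follows essentially the same route as the paper: treat Algorithm~NoisyModDiff($\varphi$) as Algorithm~NoisyCovDiff($\varphi$) with errors $\overline{N}_k(x_{ik})-d_k(x_{ik})$, observe that $E(\overline{N}_k(x_{ik}))=0$ so Lemma~\ref{unifAsympUnbias} survives, and isolate the only new contribution, the bilinear term $\overline{N}_{k2}$, whose summands $X_{jk}^{(1)}X_{jk}^{(2)}$ are row-wise independent, zero mean, and have uniformly bounded fourth moments, so the Khinchine--Markov estimate of Lemma~\ref{expo} applies up to a constant. Your write-up simply spells out in more detail what the paper's proof asserts in compressed form.
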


\begin{proof}
Note that by (\ref{Ns}), we have $E(\overline{N}_k(x_{ik}))=0$ for all $i$ and
$k$. Therefore the same calculations as in the proof of
Theorem~\ref{maindiffePR} show that the second statement in
Lemma~\ref{unifAsympUnbias} still holds.

In Lemma~\ref{expo}, it is enough in view of the proof of
Theorem~\ref{maindiffePR} to consider the contribution to $g_k(x)-E(g_k(x))$ from
$\overline{N}_{k2}(x_{ik})$, namely,
$$
\frac{1}{(2\pi k^{\gamma})^n}\sum_{j=-I_k'}^{I_k'}
\sum_{i=-I_k'}^{I_k'}\beta_{ik}(x)\cos(z_{jk}\cdot x_{ik})X_{jk}^{(1)}
X_{jk}^{(2)}.
$$
This allows the same estimate as before, up to a constant.  No further changes
are required, so Algorithm~NoisyCovDiff($\varphi$) works with the new
measurements under the same hypotheses as in Theorem~\ref{maindiffePR}.
\end{proof}

The previous result provides a convergence rate for
Algorithm~NoisyModDiff($\varphi$) arbitrarily close to $k^{-1/4+3/(8n)}$, as was noted for Algorithm~NoisyMod$\,^2$Diff($\varphi$) after Theorem~\ref{maindiffePR}.  If we assume instead that the random variables $X_{ik}$ in Algorithm~NoisyModDiff($\varphi$) are row-wise independent, zero mean, and satisfy (\ref{Bernie1}) and (\ref{Bernie2}), that $\gamma>1/2$, and that (\ref{finalassump}) holds, then a rate arbitrarily close to $k^{-1/2}$ can be obtained by the methods outlined in Remark~\ref{cordifferem}.

\section{Appendix}\label{Appendix}
\subsection{Convergence rates}

Rates of convergence for Algorithm~NoisyCovDiff($\varphi$), and hence for the two related algorithms for phase retrieval, are provided in Corollary~\ref{cordiffe} and Remark~\ref{cordifferem}. For the other algorithms, however, rates of convergence are more difficult to obtain.  To explain why, it will be necessary to describe some results from \cite{GKM06}, where convergence rates were obtained for algorithms for reconstructing convex bodies from finitely many noisy measurements of either their support functions or their brightness functions.  The algorithms are called Algorithm~NoisySupportLSQ and Algorithm~NoisyBrightnessLSQ, respectively.

In \cite{GKM06}, an unknown convex body $K$ is assumed to be contained in a known ball $RB^n$, $R>0$, in $\R^n$.  An infinite sequence $(u_i)$ in $S^{n-1}$ is selected, and one of the algorithms is run with noisy measurements from the first $k$ directions in the sequence as input.  The noise is modeled by Gaussian $N(0,\sigma^2)$ random variables.  With an assumption on $(u_i)$ slightly stronger than the condition that it is evenly spread (but still mild and satisfied by many natural sequences), and another unimportant assumption on the relation between $R$ and $\sigma$, it is proved in \cite[Theorem~6.2]{GKM06} that if $P_k$ is the corresponding output from Algorithm~NoisySupportLSQ, then, almost surely, there are constants $C=C(n,(u_i))$ and $N=N(\sigma,n,R,(u_i))$ such
that
\begin{equation}\label{PWConv2}
\delta_2(K,P_k)\le
C\,\sigma^{4/(n+3)}R^{(n-1)/(n+3)}k^{-2/(n+3)},
\end{equation}
for $k\ge N$, provided that the dimension $n\le 4$.  Here $\delta_2$ is the $L_2$ metric, so that $\delta_2(K,P_k)=\|h_K-h_{P_k}\|_{2}$, where $\|\cdot\|_2$ denotes the $L_2$ norm on $S^{n-1}$.  Convergence rates for the Hausdorff metric are then obtained by using the known relations between the two metrics.

It is an artifact of the method that while convergence rates can also be obtained for $n\ge 5$, neither these nor those for the Hausdorff metric are expected to be optimal. In contrast, it has recently been proved by Guntuboyina \cite{Gun10+} that the rate given in (\ref{PWConv2}) for $n\le 4$ is the best possible in the minimax sense.  With the additional assumption that $K$ is $o$-symmetric, corresponding rates for Algorithm~NoisyBrightLSQ are obtained in \cite[Theorem~7.6]{GKM06} from those for Algorithm~NoisySupportLSQ by exploiting (\ref{projbod}) and the Bourgain-Campi-Lindenstrauss stability theorem for projection bodies.

There are two principal ingredients in the proof of (\ref{PWConv2}).  The first is \cite[Corollary~4.2]{GKM06}, a corollary of a deep result of van de Geer \cite[Theorem~9.1]{vdG00}.  This corollary provides convergence rates for least squares estimators of an unknown function in a class $\mathcal{G}$, based on finitely many noisy measurements of its values, where the noise is uniformly sub-Gaussian.  The result and the rates depend on having a suitable estimate for the size of $\mathcal{G}$ in terms of its $\ee$-entropy with respect to a suitable pseudo-metric.  The second ingredient is a known estimate (see \cite[Proposition~5.4]{GKM06}) of the $\ee$-entropy of the class of support functions of compact convex sets contained in $B^n$, with respect to the $L_{\infty}$ metric.

It should be possible to apply this method to obtain convergence rates for Algorithm~NoisyCovBlaschke and the two related algorithms for phase retrieval.  With Gaussian noise, or more generally uniformly sub-Gaussian noise, this requires a modification to \cite[Theorem~9.1]{vdG00} that, in our situation, allows (\ref{eqff}) to be used instead of the same inequality without the term $c_{\ref{cgood}}/k$. (Compare \cite[(9.1), p.~148]{vdG00}.) This would yield the same convergence rates given in \cite[Theorem~7.6]{GKM06} for Algorithm~NoisyBrightLSQ.
To cover the case of Poisson noise, however, one can make the general assumption that the random variables are row-wise independent, zero mean, and satisfy (\ref{Bernie1}) and (\ref{Bernie2}), as in Remark~\ref{cordifferem}.  This creates considerable further technical difficulties.  It may well be possible to overcome these, using the machinery behind another result of van de Geer \cite[Theorem~9.2]{vdG00}.  But, as van de Geer points out in \cite[p.~134]{vdG00}, there is a price to pay: One now requires a uniform bound on the class $\mathcal{G}$ of functions, as well as estimates of $\ee$-entropy ``with bracketing."  The former condition might be dealt with by (\ref{unifBound}), which implies that the sets $\Pi Q_k$ are uniformly bounded for any fixed realization.  It should also be possible to obtain the latter, by combining suitable modifications of the bracketing argument of Lemma~\ref{net} and of the proof in \cite[Theorem~7.3]{GKM06} of the $\ee$-entropy estimate for the class of zonoids contained in $B^n$.

But we have not carried out a complete investigation into convergence rates for Algorithm~NoisyCovBlaschke and the related algorithms for phase retrieval, despite having a strategy for doing so, described in the previous paragraph.  The main reason is that there are more serious technical obstacles in achieving convergence rates for Algorithm~NoisyCovLSQ, even for the case of Gaussian noise.  In principal, the method outlined above could be applied by taking $\mathcal{G}$ to be the class of covariograms of compact convex subsets of the unit ball in $\R^n$.  However, an estimate would be required of the $\ee$-entropy of this class with respect to the $L_{\infty}$ metric or some other suitable pseudo-metric. Even if this were available, an application of the theory of empirical processes as described above would yield convergence rates not for $\delta_2(K,P_k)$ but rather for $\|g_K-g_{P_k}\|_{2}$.  To obtain rates for $\delta_2(K,P_k)$, one would then also need suitable stability versions of the uniqueness results for the Covariogram Problem described in the Introduction. In view of the difficulty of these uniqueness results, proving such stability versions will presumably be very challenging.

In summary, a full study of convergence rates for the other algorithms proposed here must remain a project for future study.

\subsection{Implementation issues}

The study undertaken in this paper is a theoretical one. Although we propose algorithms in enough detail to allow implementation, the laborious task of writing all the necessary programs, carrying out numerical experiments, and comparing with other algorithms, largely lies ahead.

At the present time we only have a rudimentary implementation of Algorithms~NoisyCovBlaschke and~NoisyCovLSQ.  The programs were written, mainly in Matlab, by Michael Sterling-Goens while he was an undergraduate student at Western Washington University, and are confined to the planar case.  Algorithm~NoisyCovBlaschke seems to be very fast; this is to be expected, since it is based on Algorithm~NoisyBrightnessLSQ, which is also fast even in three dimensions.  Behind both of these latter two algorithms is a linear least squares problem (cf.~\cite[(18) and (19)]{GarM03}).  In contrast, the least squares problem (\ref{obj1}) in Algorithm~NoisyCovLSQ is nonlinear.  Preliminary experiments indicate that reasonably good reconstructions, such as those depicted in Figures~\ref{poly1}--\ref{ellipse2} (based on Gaussian $N(0,\sigma^2)$ noise, $k=60$ equally spaced directions in Algorithm~NoisyCovBlaschke and $k=8$ in Algorithms~NoisyCovLSQ), can usually be obtained in a reasonable time in the planar case.  Occasionally, however, reconstructions can be considerably worse, particularly for regular $m$-gons for very small $m$. Better and faster reconstructions, also in higher dimensions, will probably require bringing to bear the usual array of techniques for nonlinear optimization, such as simulated annealing.
\vspace{-.1in}

\begin{figure}[ht]
\centering
\begin{tabular}{cc}
\begin{minipage}{3in}
\centering
\includegraphics[width=8cm]{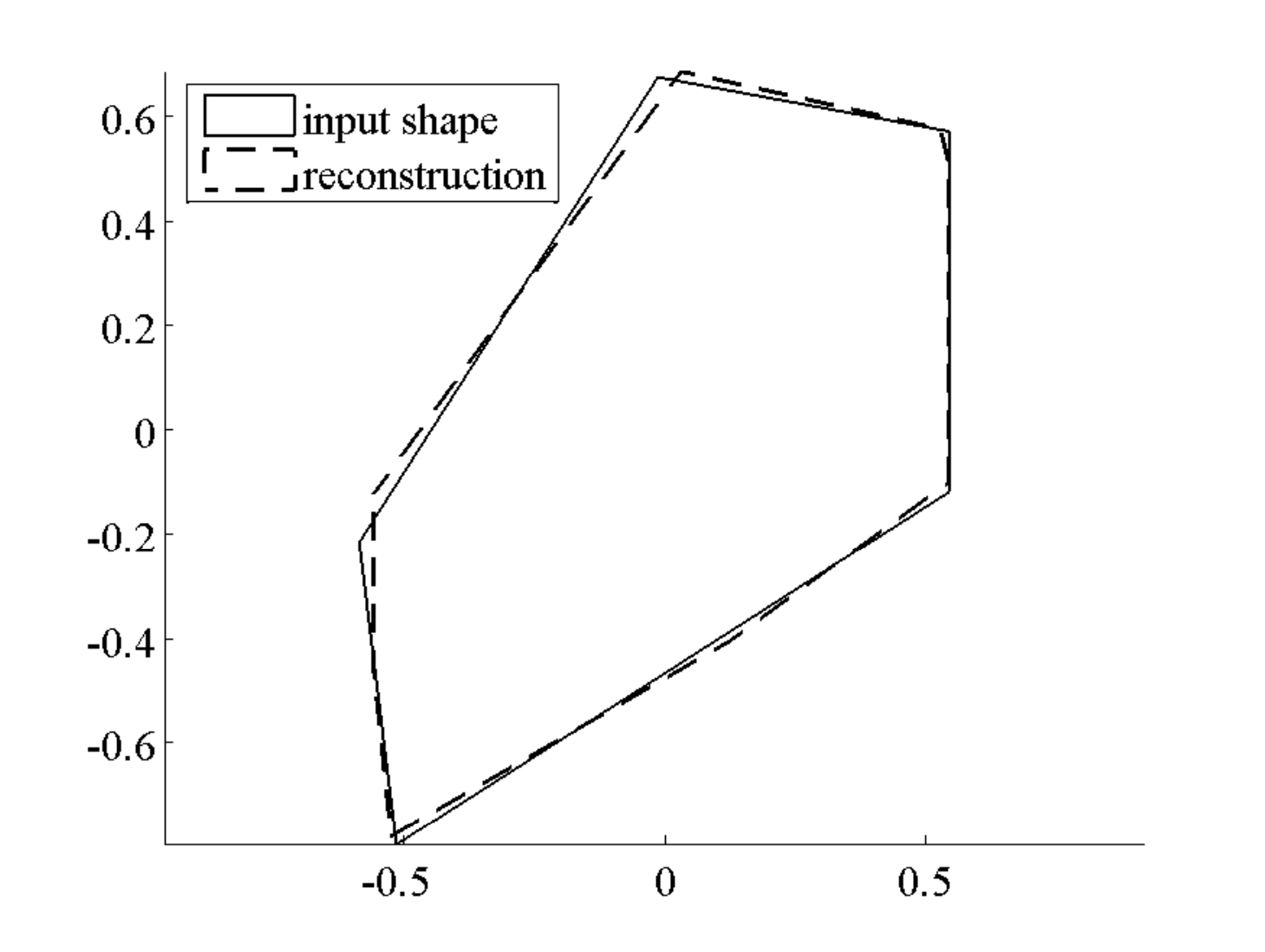}
\vspace{-.4in}\caption {Pentagon, no noise}\label{poly1}
\end{minipage}
& \begin{minipage}{3in}
\includegraphics[width=8cm]{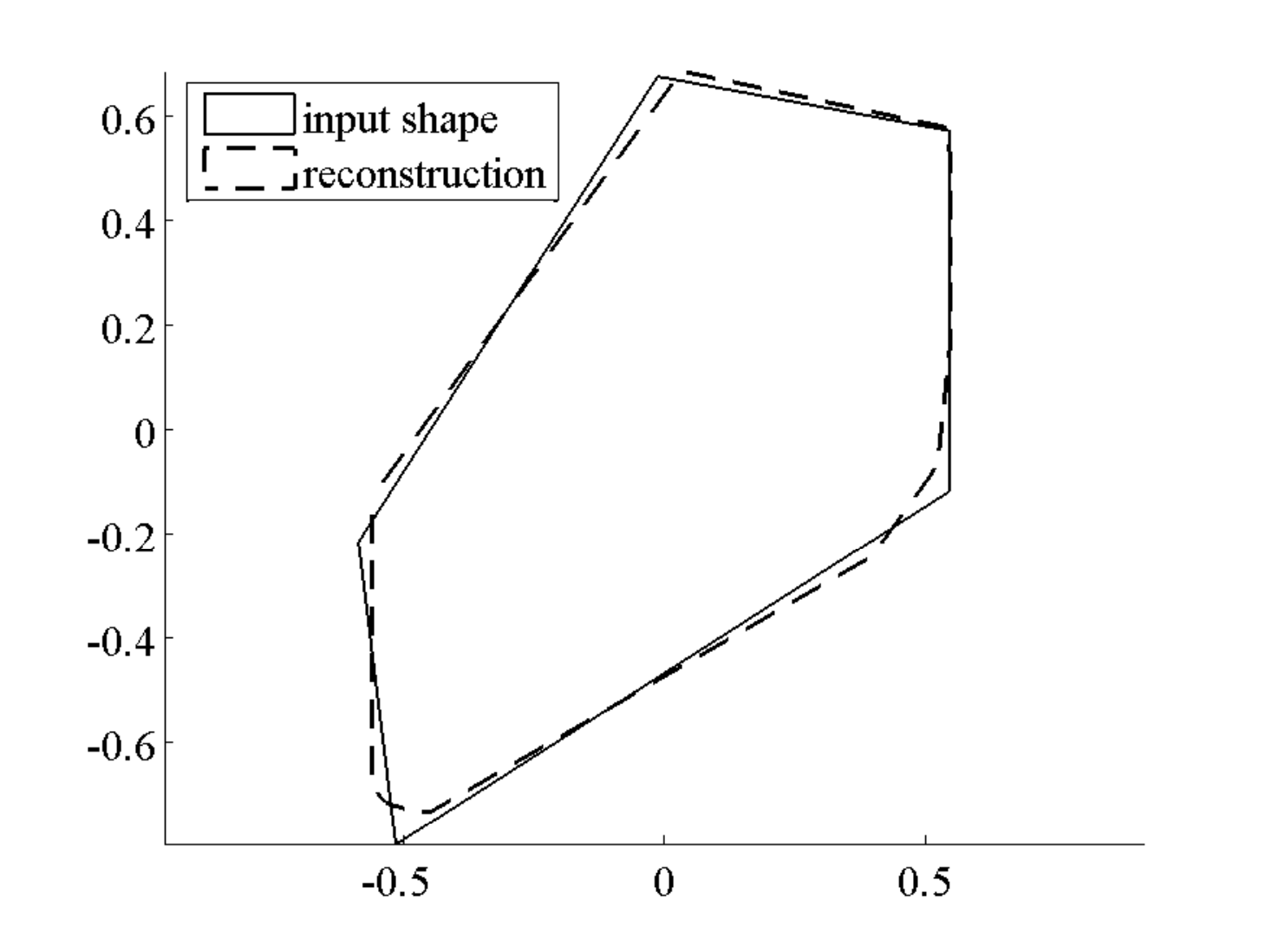}
\vspace{-.4in} \caption {Pentagon, $\sigma=0.01$}\label{poly2}
\end{minipage}\\
\end{tabular}
\end{figure}

\vspace{-.3in}

\begin{figure}[ht]
\centering
\begin{tabular}{cc}
\begin{minipage}{3in}
\centering
\includegraphics[width=8cm]{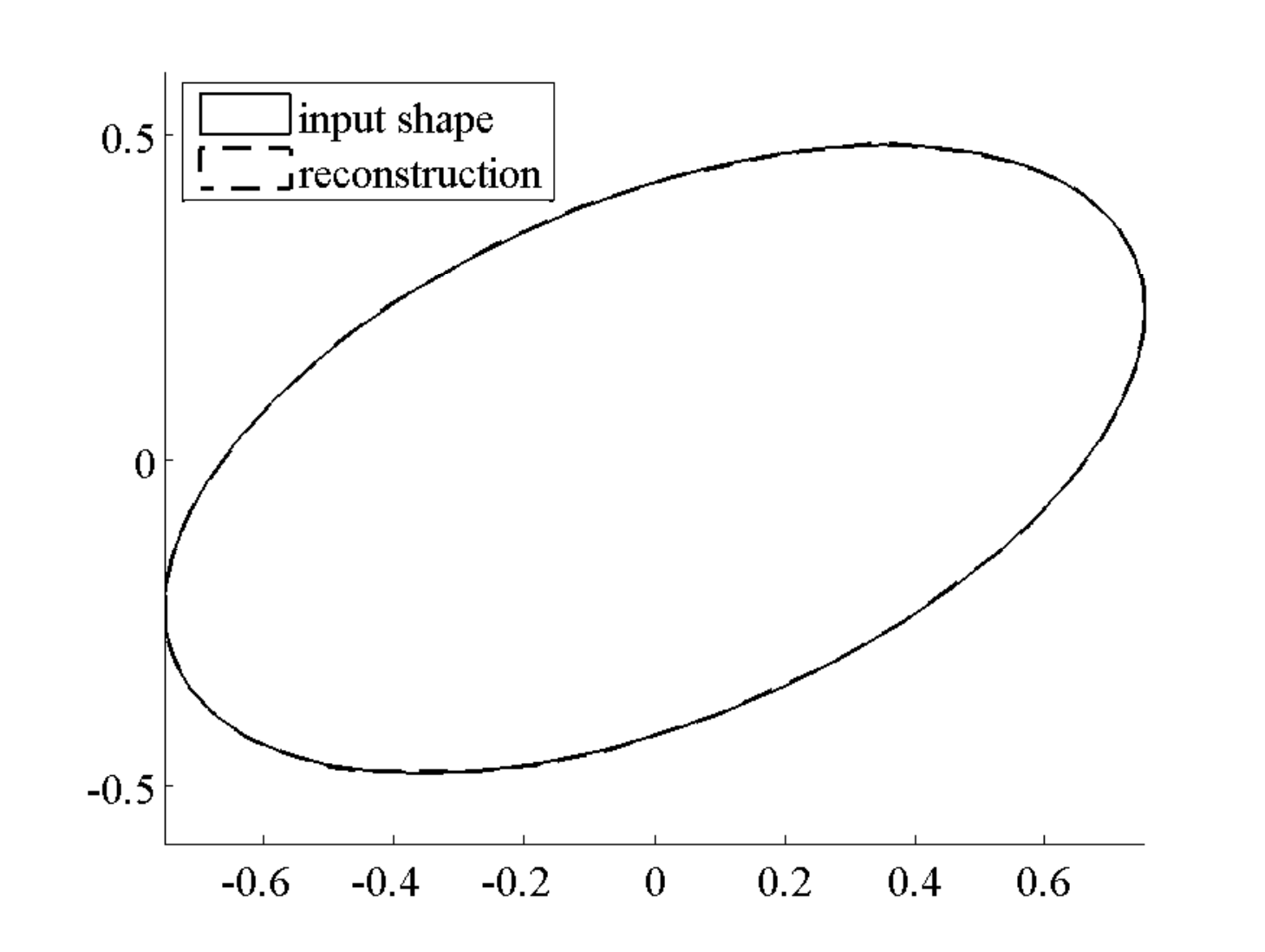}
\vspace{-.4in}\caption {Ellipse, no noise}\label{ellipse1}
\end{minipage}
& \begin{minipage}{3in}
\includegraphics[width=8cm]{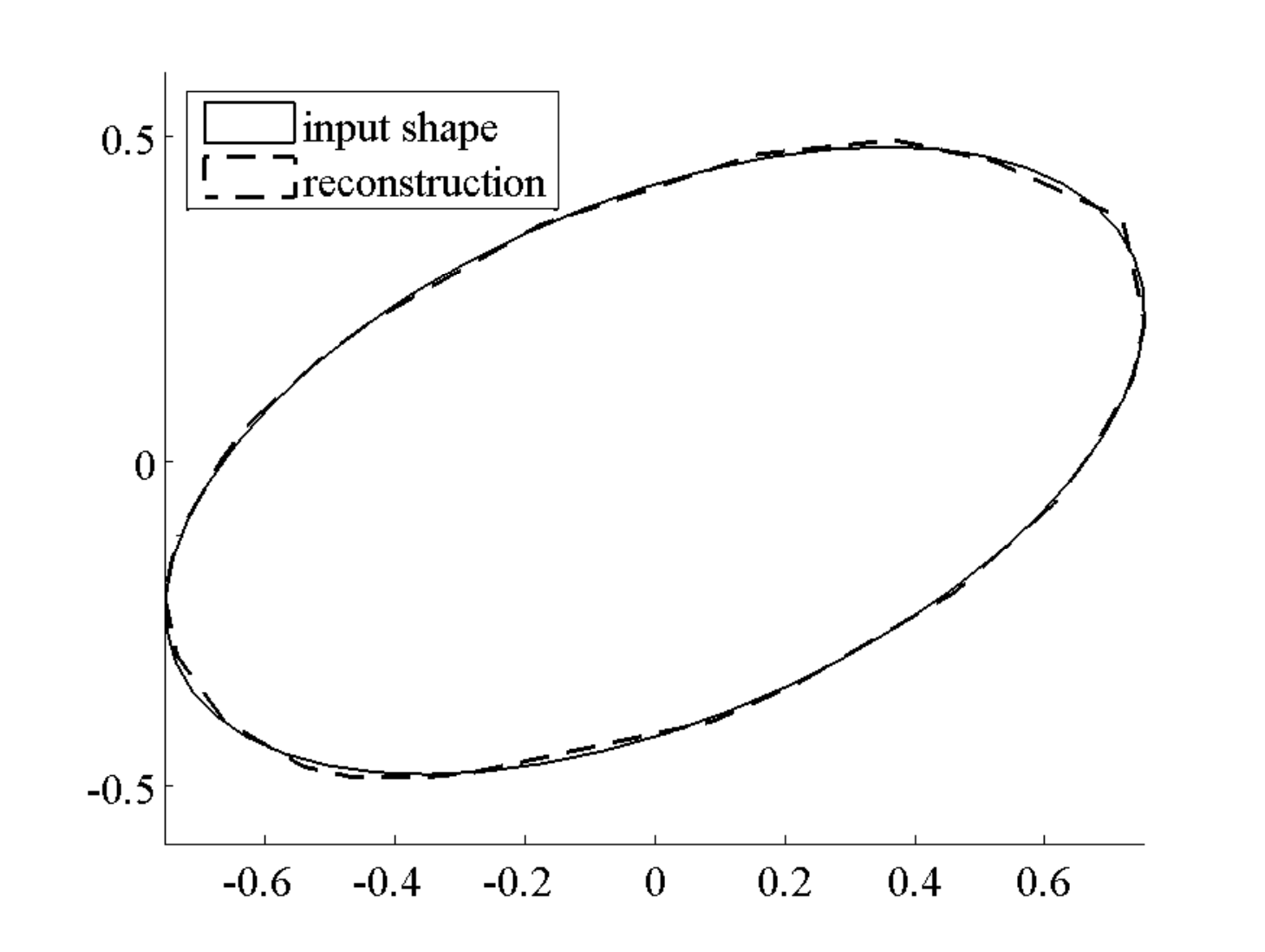}
\vspace{-.4in} \caption {Ellipse, $\sigma=0.01$}\label{ellipse2}
\end{minipage}\\
\end{tabular}
\end{figure}

Since the least squares problem (\ref{obj1}) is nonlinear, it is important to control the number of variables, that is, the number of facets of the approximation $Q_k$ to the Blaschke body $\nabla K_0$ of $K_0$.  To a large extent, Algorithm~NoisyCovBlaschke already does this; the potential $O(k^{n-1})$ variables that would otherwise be required (see \cite[p.~1335]{GKM06}) is, as experiments show, considerably reduced.  In fact, if there is little or no noise, a linear programming version of the brightness function reconstruction program due to Kiderlen (see \cite[p.~289]{GarM03}, where it is stated for measurements without noise) is not only even faster, but also produces approximations $Q_k$ to $\nabla K_0$ with at most $2k$ facets.  Beyond this, there is the possibility of using the pruning techniques discussed in \cite[Section~3.3]{PMG06}.

There is also the possibility of changing the variables in the least squares problem (\ref{obj1}). A convex polytope $P$ whose facet outer unit normals are a subset of a prescribed set $\{\pm u_j: j=1,\dots, s\}$ of directions can be specified by the vector $h=(h_1^+,h_1^-,\dots,h_s^+,h_s^-)$ such that
$$P=P(h)=\{x\in\R^n: -h_j^-\le x\cdot u_j\le h_j^+, j=1,\dots,s\}.$$
The possible advantage in using these variables arises from the fact that, by the Brunn-Minkowski inequality (cf.~\cite[Section~11]{Gar02}), the covariogram $g_{P(h)}(x)$ turns out to be $(1/n)$-concave (i.e., $g_{P(h)}(x)^{1/n}$ is concave) on its support in the combined variable $(h,x)$. One may therefore try solving the problem
\begin{equation}\label{obj1app}\min
\sum_{i=1}^{I_k}\left(M_{ik}-g_{P(h)}(x_{ik})\right)^2
\end{equation}
over the variables $h_1^+,h_1^-,\dots,h_s^+,h_s^-$.  By expanding the square in (\ref{obj1app}), approximating the sums by integrals, and using the Pr\'{e}kopa-Leindler inequality \cite[Section~7]{Gar02}, the objective function can be seen as an approximation to the difference of two log-concave functions. These admittedly weak concavity properties may help.

Regularization is often used to improve Fourier inversion in the presence of noise.  We expect this to be of benefit in implementing the phase retrieval algorithms, where preliminary investigations indicate that regularization will allow the restriction on the parameter $\gamma$ to be considerably relaxed.

Corresponding to the two basic approaches to reconstruction---one via the Blaschke body and one via the difference body---there are two different sampling designs.  For the former, measurements are made first at the origin and at points in a small sphere centered at the origin, and then again at points in a cubic array.  For the latter, measurements are made twice, each time at points in cubic array.  These sampling designs are a matter of convenience, at least regarding the cubic array. It should be possible to use a variety of different sets of measurement points, at least for reconstructing from covariogram measurements, with appropriate adjustments in the consistency proofs.

\bigskip

\end{document}